\theoremstyle{plain}
\newtheorem{thm}{Theorem}
\newtheorem{prop}[thm]{Proposition}
\newtheorem{cor}[thm]{Corollary}
\newtheorem{lem}[thm]{Lemma}
\newtheorem{conj}[thm]{Conjecture}
\newtheorem*{claim*}{Claim}
\theoremstyle{definition}
\newtheorem{defn}[thm]{Definition}
\newtheorem*{defn*}{Definition}
\newtheorem{fact}[thm]{Fact}
\newtheorem*{que*}{Question}
\theoremstyle{remark}
\newtheorem{rem}[thm]{Remark}
\newtheorem{exa}[thm]{Example}
\numberwithin{thm}{section}
\numberwithin{table}{section}
\DeclareMathOperator{\BigP}{Big}
\DeclareMathOperator{\Res}{Res}
\DeclareMathOperator{\QQ}{Q}
\DeclareMathOperator{\cl}{cl}
\DeclareMathOperator{\crit}{crit}
\DeclareMathOperator{\ot}{ot}
\DeclareMathOperator{\RV}{RV}
\DeclareMathOperator{\rv}{rv}
\DeclareMathOperator{\supp}{supp}
\newcommand{\Nb}{\mathbb{N}}
\newcommand{\Rb}{\mathbb{R}}
\newcommand{\on}{\mathbf{On}}
\newcommand{\KRR}{K((\Rb^{\leq 0}))}
\title{Irreducibility in generalized power series}
\date{27th November 2023}
\author{Antongiulio Fornasiero}
\address{Department of Mathematics and Computer Science, University of Florence, Florence, Italy}
\email{antongiulio.fornasiero@unifi.it}
\author{Noa Lavi}
\address{Department of Mathematics and Physics, Roma Tre University, Rome, Italy}
\email{noa.lavi@unicam.it}
\author{Sonia L'Innocente}
\address{School of Science and Technology, Mathematics Division, University of Camerino, Camerino, Italy}
\email{sonia.linnocente@unicam.it}
\author{Vincenzo Mantova}
\address{School of Mathematics, University of Leeds, LS2 9JT Leeds, United Kingdom}
\email{v.l.mantova@leeds.ac.uk}
\thanks{No data are associated with this article. For the purpose of open access, the authors have applied a Creative Commons Attribution
 (CC BY) licence to any Author Accepted Manuscript version arising from this submission.}
\thanks{The authors were supported by:  PRIN 2017 ``Mathematical logic: models, sets, computability'' 2017NWTM8R (A.F, N.L, S.L.), INdAM-GNSAGA (A.F, S.L.); Engineering and Physical Sciences Research Council ``Model theory of analytic functions'' EP/T018461/1 (V.M.).}
\subjclass[2020]{Primary 13F25, 13F15, secondary 13A05, 03E10}
\keywords{omnific integers, surreal numbers, pre-Schreier domain, valued ring}
\begin{document}

\begin{abstract}
  A classical tool in the study of real closed fields are the fields $K((G))$ of generalized power series (i.e., formal sums with well-ordered support) with coefficients in a field $K$ of characteristic 0 and exponents in an ordered abelian group $G$. In this paper we enlarge the family of ordinals $\alpha$ of non-additively principal Cantor degree for which $K((\mathbb{R}^{\le 0}))$ admits irreducibles of order type $\alpha$ far beyond $\alpha=\omega^2 $ and $\alpha = \omega^3$ known prior to this work.
\end{abstract}
\maketitle

\section{Introduction}
Rings and fields of power series are classical tools in various areas, such as valuation theory. Call \textbf{generalised power series} a formal sum $b = \sum_\gamma b_\gamma t^\gamma$ where the \textbf{exponents} $\gamma$ vary in an ordered abelian group $G$, the \textbf{coefficients} $b_\gamma$ are taken from some field $K$, and its \textbf{support} $\{\gamma \in G : b_\gamma \neq 0\}$ is well-ordered, namely every nonempty subset has a minimum. It is well known that the collection $K((G))$ of such series forms a field, when equipped with the obvious operations of sum and product \cite{Hah1907}.

We are interested in the irreducible series in the subrings of the form $K((G^{\leq 0}))$ or $Z + K((G^{<0}))$. Such rings appear in different contexts; for instance, $\mathbb{Z} + \mathbb{R}((G^{<0}))$ is always an integer part of the field $\mathbb{R}((G))$, which in turn implies that every real closed field admits an integer part \cite{MR1993}. Regarding irreducibility, the first question was posed by Conway \cite{Con1976}, who conjectured that the series $1 + \sum_n t^{-\frac{1}{n}}$ is irreducible in the ring of omnific integers, which can be written in the form $\mathbb{Z} + \mathbb{R}((G^{<0}))$ (modulo some set-theoretic details which are irrelevant here).

Conway's conjecture was proved by Berarducci \cite{Ber2000}. A crucial part of the argument, first suggested by Gonshor \cite{Gon1986}, is the reduction to the ring $K((\mathbb{R}^{\leq 0}))$. In this paper, we work exclusively in this ring. We refer the reader to \cite{BKK2006,LM} for extensive considerations on how to use irreducibility in $K((\mathbb{R}^{\leq 0}))$ in order to find irreducibles in rings of the form $Z + K((G^{<0}))$.

More irreducible series have been presented in the literature. Let the \textbf{order type} $\ot(b)$ of a power series $b \in K((\mathbb{R}^{\leq 0}))$ be the ordinal number representing the order type of its support. Let $J$ be the ideal of the series that are divisible by $t^\gamma$ for some $\gamma \in \mathbb{R}^{<0}$ (such series cannot be factored into irreducibles, since $t^\gamma = t^{\frac{\gamma}{2}} t^{\frac{\gamma}{2}} = \ldots$). Berarducci's main result reads as follows.

\begin{thm}[{\cite[Thm.\ 10.5]{Ber2000}}]\label{maintB}
  If $b \in K((\mathbb{R}^{\leq 0})) \setminus J$ (equivalently,
  $b \in K((\mathbb{R}^{\leq 0}))$ not divisible by $t^\gamma$ for any
  $\gamma < 0$) has order type $\omega^{\omega^\alpha}$ for some
  ordinal $\alpha$, then both $b$ and $b+ 1$ are irreducible.
\end{thm}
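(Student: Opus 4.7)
My plan is to equip the quotient domain $K((\mathbb{R}^{\leq 0}))/J$ with an ordinal-valued ``valuation'' $\vj$ that captures irreducibility behaviour. To each $b \in K((\mathbb{R}^{\leq 0})) \setminus J$ I would attach an ordinal $\vj(b)$ measuring the order type of the portion of $\supp(b)$ accumulating at $0$. The definition must be crafted so that: (i) $\vj$ is well-defined on $K((\mathbb{R}^{\leq 0}))/J$, which in turn requires showing that $J$ is prime so the quotient is a domain; (ii) $\vj(b)$ equals $\ot(b) = \omega^{\omega^\alpha}$ for the $b$ in the hypothesis; and (iii) $\vj(b+1) = \vj(b)$, since adding a constant term inserts a single isolated point at the top of $\supp(b)$ without affecting the accumulation structure at $0$.

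The key algebraic property I would aim to establish is multiplicativity under the natural (Hessenberg) sum of ordinals: $\vj(bc) = \vj(b) \oplus \vj(c)$ for $b, c \notin J$. Granting this, the irreducibility of $b$ is immediate from the fact that the ordinals of the form $\omega^{\omega^\alpha}$ are precisely the ordinals that are additively principal for $\oplus$: if $\omega^{\omega^\alpha} = \xi \oplus \eta$, then one of $\xi, \eta$ is $0$ (this is transparent from Cantor normal form, since the ordinals below $\omega^{\omega^\alpha}$ are characterised by having all Cantor exponents $<\omega^\alpha$, a property preserved under $\oplus$). Given a factorisation $b = cd$, primality of $J$ forces $c, d \notin J$; then $\vj(c) \oplus \vj(d) = \omega^{\omega^\alpha}$ forces, say, $\vj(d) = 0$, and a short argument using the non-divisibility by any $t^\gamma$ upgrades this to $d$ being a unit in $K((\mathbb{R}^{\leq 0}))$. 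Applying the same argument to $b+1$ via property (iii) handles the second case.

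The technical core, and the main obstacle, is establishing multiplicativity of $\vj$. The support of a product $bc$ is contained in the Minkowski sum $\supp(b) + \supp(c)$, but uncontrolled cancellations in the convolution can in principle reduce its order type significantly. The whole point of quotienting by $J$ is to discard precisely those phenomena that are sensitive to cancellations: one must design $\vj$ so that its value depends only on the ``accumulation content'' near $0$, which should behave robustly under multiplication. Verifying this robustness, and in particular showing that the natural sum $\vj(b) \oplus \vj(c)$ is actually realised in $\vj(bc)$ (and not reduced by cancellation), requires a delicate combinatorial analysis of well-ordered subsets of $\mathbb{R}^{\leq 0}$ near $0$ under Minkowski addition. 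I expect this to be by far the hardest part of the argument; the rest is then essentially an application of Cantor normal form and general nonsense about prime ideals.
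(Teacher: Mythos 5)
Your outline follows Berarducci's strategy and you correctly identify the technical core: build a semi-valuation $v_J$ on $\KRR$ vanishing exactly on $J$ and prove it is multiplicative. But you have stated the multiplicativity law with the wrong Hessenberg operation, and this error propagates. Berarducci's result (quoted in the paper as the key Fact on $v_J$) is $v_J(bc) = v_J(b) \odot v_J(c)$, where $\odot$ is the natural \emph{product}, not the natural sum $\oplus$. Your version $v_J(bc) = v_J(b) \oplus v_J(c)$ is simply false on the scale you chose: for $b = \sum_n t^{-1/n}$ one has $v_J(b) = \omega$, so $v_J(b^2) = \omega \odot \omega = \omega^2$, whereas $v_J(b) \oplus v_J(b) = 2\omega$. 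Thus the ``technical core'' you propose to prove cannot be proved, and you would discover this immediately on trying. (The $\oplus$ law does hold, but for the exponent $\deg_J$, not for $v_J$ itself; and $\oplus$ also governs $v_J$ of \emph{sums}, via $v_J(b + c) \leq v_J(b) \oplus v_J(c)$.)

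The error also shows in the structural half of the argument. The $\oplus$-indecomposable ordinals are all those of the form $\omega^\gamma$, not ``precisely the $\omega^{\omega^\alpha}$'' as you claim ($\omega^2$ is $\oplus$-indecomposable but not of that form). What singles out $\omega^{\omega^\alpha}$ is $\odot$-indecomposability (multiplicative principality), which is the notion that pairs with the correct multiplicativity law. Under the correct law, a factorisation $b = cd$ gives $v_J(c) \odot v_J(d) = \omega^{\omega^\alpha}$, forcing one factor, say $v_J(d)$, to equal $1$ — the $\odot$-identity — not $0$. And $v_J(d) = 1$ means $d \in (J + K) \setminus J$, i.e.\ $d$ has a nonzero constant coefficient, which is exactly the condition for being a unit in $\KRR$, so no further argument is needed at that step. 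Your conclusion $v_J(d) = 0$ would instead mean $d \in J$, flatly contradicting the primality of $J$ you had just invoked to exclude this, so the deduction as written cannot close.
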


Further irreducible series of order types $\omega^2$ and $\omega^3$ were exhibited in \cite{PS2006} and in \cite{LM2017}; in parallel, Pitteloud \cite{Pit2001} proved that for $b$ of order type $\omega$, $b$ and $b+1$ are prime, answering a question of Gonshor.

We will show that, in an appropriate sense, \emph{most} series are irreducible for a wide class of order types, and give explicit examples of such series.

\begin{defn}
  Let $P_{\alpha} $ denote the set of all series $b \in \KRR$ such that $\ot(b)=\omega^{\alpha}$ and the supremum of the support of $b$ (also denoted by \emph{$\sup(b)$}) is $0$. A series $b$ in $P_{\alpha}$ is said to be \textbf{principal}.
\end{defn}

For instance, $P_0 = K \setminus \{0\}$, and the series $\sum_n t^{-\frac{1}{n}}$ is in $P_1$; on the other hand, $t^{-1} + 1$ and $1 + \sum_n t^{-\frac{1}{n}}$ are not principal since their order types are respectively $2$ and $\omega + 1$, while $\sum_n t^{-1-\frac{1}{n}}$ is also not principal, this time because the supremum of its support is $-1$.

By Berarducci's analysis on principal series, all series in $P_{\alpha}$ satisfy the following properties: for all ordinals $\alpha$, $\beta$, $\gamma$,
\begin{enumerate}
  \item\label{item:Palpha*Pbeta} $P_{\alpha} \cdot  P_{\beta} \subseteq  P_{\alpha \oplus \beta}$ (\cite[Cor.\ 9.9]{Ber2000});
  \item\label{item:Palpha divisors} all divisors of $b \in P_{\alpha} $ are principal (\cite[Cor.\ 4.6]{LM2017});
  \item $R_{\alpha} \coloneqq \{b \in P_{\alpha} : b \, \text{ is reducible}\} \stackrel{\small{\prettyref{item:Palpha divisors}}}{=} \bigcup_{\beta \oplus \gamma = \alpha}^{\beta,\gamma \neq 0} P_{\beta} \cdot P_{\gamma} \subseteq P_\alpha$.
\end{enumerate}
Here $\oplus$, $\odot$ denote Hessenberg's natural (commutative) operations. Note that \prettyref{item:Palpha*Pbeta} is equivalent to stating that $\ot(bc) = \ot(b) \odot \ot(c)$ for all principal series $b$, $c$. One can easily deduce that every principal $b$ of order type $\omega^{\omega^\alpha}$ is irreducible: there are no $\beta, \gamma \neq 0$ such that $\beta \oplus \gamma = \omega^\alpha$, so $R_\alpha = \varnothing$.

\begin{defn}
  We say that $b = \sum_{\gamma}b_{\gamma}t^{\gamma} \in \KRR$ is \emph{random} if one of the following holds:
  \begin{itemize}
    \item $\cl(\supp(b))-\{0\}$ is a $\mathbb{Q}$-linearly independent set;
    \item the tuple $\langle b_{\gamma} : \gamma \in \supp(b) \rangle $ is algebraically independent over $\mathbb{Q}$.
  \end{itemize}
\end{defn}

In the current work we prove the following:
\begin{thm} \label{printhm}
  Let $\alpha$ be an ordinal of the following forms:
  \begin{enumerate}
    \item $\alpha = \omega^{\beta_1} + \dots + \omega^{\beta_n}$ where $\beta_1 \geq \beta_2 \ge \beta_3$ and $\beta_3 > \dots >\beta_n$.
          \item\label{item:three-summands} $\alpha = \omega^{\beta_1} + \omega^{\beta_2} + k$ where $\beta_1 \ge \beta_2$ and $k \in \mathbb{N}$.
  \end{enumerate}
  If $b \in P_{\alpha}$ is random, then $b$ is irreducible, and so is $b+r$ for any principal series $r$ of order type less than $\omega^\alpha$.

  If $\alpha$ is of the form \prettyref{item:three-summands}, then there are no principal series $p_1, \ldots , p_m $, $q_1, \ldots , q_m $, $r$ such that $b = \sum_{i=1}^m p_iq_i + r$, where $\ot(r) < \omega^{\alpha}$ and for every $1 \le i \le m $ we have $1<\ot(p_i)\le\ot(q_i)$, $\ot(p_iq_i)=\omega^{\alpha}$ (that is, $p_iq_i \in R_\alpha$).
\end{thm}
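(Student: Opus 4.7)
The strategy is to reduce the factorisation problem to the combinatorics of Hessenberg sums and then exploit the randomness of $b$ to forbid the few resulting decompositions. By facts~\prettyref{item:Palpha*Pbeta} and~\prettyref{item:Palpha divisors}, any non-trivial factorisation $b = pq$ forces $p \in P_{\gamma_1}$, $q \in P_{\gamma_2}$ with $\gamma_1, \gamma_2 \neq 0$ and $\gamma_1 \oplus \gamma_2 = \alpha$. Writing $\alpha$ in Cantor normal form, such decompositions correspond to partitioning the CNF summands $\omega^{\beta_i}$ between $\gamma_1$ and $\gamma_2$. For $\alpha$ of form~\prettyref{item:three-summands}, there are only finitely many partition types: one chooses where to place $\omega^{\beta_1}$, where to place $\omega^{\beta_2}$, and how to split the $k$ copies of $\omega^0 = 1$ between the two factors. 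Allowing a finite sum $\sum_{i=1}^m p_i q_i$ and a remainder $r$ with $\ot(r) < \omega^{\alpha}$ only multiplies the bookkeeping by a finite factor and introduces no new partition types.

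Next, I would set up a stratification of each principal series into ``layers'' indexed by the CNF of its order type. The product $pq$ respects this stratification through the Minkowski sum of supports: up to strictly lower-order corrections, the top layer of $pq$ is determined by the top layers of $p$ and $q$, with coefficients given by convolution. I would make this precise via a ``top-layer coefficient'' functional which is multiplicative on principal series with Hessenberg-compatible order types. In a sum $\sum_i p_i q_i$ the top-layer contributions aggregate linearly over $i$, while $r$ contributes only to strictly lower strata and can be discarded at this level.

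Finally, I would invoke randomness. The $\Qb$-linear independence of $\cl(\supp(b)) \setminus \{0\}$ forces the Minkowski sum $\supp(p) + \supp(q)$ to degenerate for any non-trivial partition of the CNF summands of $\alpha$, while algebraic independence of $\langle b_\gamma : \gamma \in \supp(b) \rangle$ forces the convolution relations $b_\gamma = \sum_{\sigma + \tau = \gamma} p_\sigma q_\tau$ to fail on a generic coefficient tuple. The main obstacle will be ruling out cancellations between the distinct products $p_i q_i$ in the sum $\sum_{i=1}^m p_i q_i$: I expect to handle this by induction on $m$ together with a careful choice of top-layer functional that isolates one partition type at a time, exploiting that form~\prettyref{item:three-summands} leaves only a bounded list of types to examine. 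Form~(1) should then follow by an analogous stratified analysis focussed on the leading three layers $\omega^{\beta_1}, \omega^{\beta_2}, \omega^{\beta_3}$, with the strictly decreasing tail $\beta_4 > \cdots > \beta_n$ reducing the analysis to the three-block case.
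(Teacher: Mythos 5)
Your plan has the right opening move (via \prettyref{item:Palpha*Pbeta} and \prettyref{item:Palpha divisors}, any nontrivial factorisation of a principal series of value $\omega^\alpha$ corresponds to a partition of the CNF summands of $\alpha$), but the heart of it --- that randomness directly forbids these partitions by looking at Minkowski sums of supports and convolutions of coefficients --- runs into a known and serious obstacle: cancellations. The paper itself gives an explicit example (Section~2) where two principal series $b$, $c$ of order type $\omega$ have a product with cancellations on a set of order type $\omega^2 = \ot(b) \odot \ot(c)$, i.e.\ cancellation at full scale. So you cannot read off the structure of $\supp(pq)$ from $\supp(p) + \supp(q)$, nor expect the coefficient convolution to be ``generic''; your claimed ``top-layer coefficient functional'' would have to be robust to such massive interference, and you do not indicate how to construct one. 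The paper's workaround is not a functional on coefficients but the multiplicative semi-valuation $v_J$ together with translated truncations $b^{|\gamma}$, for which a Leibniz-type rule (Proposition~\ref{leib}) holds modulo strictly smaller ideals $J_\alpha$, and the sets $\Res(b)$, $\BigP^\alpha(r)$ that locate a large supply of good truncation points. Your stratification into ``layers'' would morally correspond to $\rv_J$ and these truncations, but the multiplicativity you need is only approximate and conditional, which is precisely what forces the inductive machinery.

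There is a second, structural gap in handling $\sum_{i=1}^m p_i q_i + r$. You propose induction on $m$ and ``isolating one partition type at a time,'' but the paper instead proves the stronger property $(*)_\alpha$: that hereditarily $\rv_J$-independent tuples $b_1, \dots, b_n \in P_\alpha$ are $K$-linearly independent over $A_\alpha = J_\alpha + \operatorname{Span}_K(R_\alpha)$. The key mechanism is a pigeonhole: for $\gamma$ ranging over the infinitely many residual points $\Res(b)$, the truncations $b^{|\gamma}$ all land in a \emph{finitely generated} module over a smaller coefficient ring (generated by the $q_i$'s), giving a nontrivial dependence which contradicts the hereditary independence $\QQ$ at a lower level. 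Without this infinite-vs-finite tension --- and without the careful bookkeeping of Lemmas~\ref{bigger points are few}, \ref{smallder}, \ref{lincomb}, \ref{sameres} that make it usable mod the right ideals --- I don't see how your induction on $m$ closes. Finally, your expectation that form~(1) ``should then follow by an analogous stratified analysis'' understates the work: the paper cannot maintain $(*)_\alpha$ past form~(2) and instead proves bare irreducibility via a separate inductive step (Proposition~\ref{strict}, relying on Lemma~\ref{strprin} about $\deg_J^p$) that requires the tail exponents $\beta_3 > \dots > \beta_n$ to be strictly decreasing. That is why the theorem's two cases are stated with different strengths, a distinction your plan does not account for.
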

\begin{cor}
  Let $b$ in $P_n$, where $n \in \Nb$. If $b$ is random, then $b$ is irreducible.
\end{cor}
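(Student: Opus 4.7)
The plan is to deduce this corollary directly from Theorem~\ref{printhm}: the only task is to check that every order type $\omega^n$ with $n \in \Nb$ falls under one of the two cases in the hypothesis. For $n = 1$, $\alpha = 1 = \omega^0$ is a single power of $\omega$, which satisfies case~(1) trivially: the constraints $\beta_1 \ge \beta_2 \ge \beta_3$ and $\beta_3 > \dots > \beta_n$ are vacuous when only one summand is present. For $n \ge 2$, I would write
\[
n = \omega^0 + \omega^0 + (n-2),
\]
which is an instance of case~\prettyref{item:three-summands} with $\beta_1 = \beta_2 = 0$ and $k = n-2 \in \Nb$. Applying the theorem, any random $b \in P_n$ is then irreducible.

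The degenerate case $n = 0$ falls outside the scope of the corollary, since $P_0 = K \setminus \{0\}$ consists of units, which are by convention neither reducible nor irreducible; hence the statement should be read with the implicit hypothesis $n \ge 1$.

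I anticipate no substantive obstacle: the content of the corollary is essentially bookkeeping, making explicit that every finite order type $\omega^n$ already lies within the reach of Theorem~\ref{printhm}, with the natural decomposition $n = \omega^0 + \omega^0 + (n-2)$ doing all the work.
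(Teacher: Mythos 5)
Your proof is correct and takes the same route the paper intends: the corollary is a direct specialisation of Theorem~\ref{printhm}, with $n=1$ falling under case~(1) (single summand, vacuous constraints) and $n\ge 2$ under case~\prettyref{item:three-summands} via $n=\omega^0+\omega^0+(n-2)$. Your observation that $n=0$ must be excluded (since $P_0=K\setminus\{0\}$ consists of units, which are by convention neither reducible nor irreducible) is a fair reading of a minor imprecision in the statement.
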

\begin{cor}
  Let $\alpha$ be as in Theorem~\ref{printhm}\prettyref{item:three-summands}. Then $\operatorname{Span}_K(R_{\alpha}) $ is infinite co-dimensional in $\operatorname{Span}_K(P_{\alpha})$ as a $K$-vector space.
\end{cor}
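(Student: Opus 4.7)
We exhibit an infinite family $\{b_n\}_{n \in \Nb} \subseteq P_\alpha$ of random series whose nontrivial finite $K$-linear combinations remain random elements of $P_\alpha$; Theorem~\ref{printhm}\prettyref{item:three-summands} (applied with the remainder $r = 0$, the case implicit in the stated decomposition) then ensures that no such combination lies in $\operatorname{Span}_K(R_\alpha)$, giving the infinite codimension.

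Fix a countable well-ordered set $S \subseteq \Rb^{<0}$ of order type $\omega^\alpha$ with $\sup S = 0$ and $\cl(S) \setminus \{0\}$ $\Qb$-linearly independent. Such $S$ exists by picking countably many reals from a transcendence basis of $\Rb$ over $\Qb$ in a well-order accumulating at $0$. By the first clause of the definition of \emph{random}, every series in $\KRR$ supported on a subset of $S$ of order type $\omega^\alpha$ and supremum $0$ automatically lies in $P_\alpha$ and is random.

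Partition $S = \bigsqcup_{n \in \Nb} S_n$ so that each $S_n$, and each finite union $\bigcup_{n \in F} S_n$, still has order type $\omega^\alpha$ and supremum $0$. For $\alpha = 2$: write $S = \bigcup_m B_m$ as a union of $\omega$-many blocks $B_m$ of order type $\omega$ with $\sup B_m = a_{m+1} \nearrow 0$; fix a partition $\Nb = \bigsqcup_n I_n$ into infinite subsets via any bijection $\Nb \leftrightarrow \Nb^2$; and set $S_n = \bigcup_{m \in I_n} B_m$. Each $S_n$ is then a cofinal $\omega$-indexed union of $\omega$-blocks, hence has order type $\omega^2$ with supremum $0$, and the same holds for any finite union since a finite union of cofinal subsets of $\Nb$ is still cofinal. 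This extends to general $\alpha$ as in \prettyref{item:three-summands} by an analogous block decomposition of $\omega^\alpha$ along its Cantor normal form.

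For each $n$, pick $b_n \in P_\alpha$ with support exactly $S_n$ and nonzero coefficients; each $b_n$ is then random. A nontrivial finite $K$-linear combination $b = \sum_{n \in F} \lambda_n b_n$ (with $\lambda_n \neq 0$ for $n \in F$) satisfies $\supp(b) = \bigsqcup_{n \in F} S_n$, which by the partition has order type $\omega^\alpha$ and supremum $0$; so $b \in P_\alpha$, and $\supp(b) \subseteq S$ makes it random. Theorem~\ref{printhm}\prettyref{item:three-summands} then yields $b \notin \operatorname{Span}_K(R_\alpha)$, proving that $\{b_n\}_{n \in \Nb}$ are $K$-linearly independent modulo $\operatorname{Span}_K(R_\alpha)$. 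The main obstacle is the partition step: producing disjoint $S_n$'s whose finite unions preserve both the order type $\omega^\alpha$ and the supremum $0$ is a delicate combinatorial exercise that depends on the specific Cantor-normal-form shape of $\alpha$ in \prettyref{item:three-summands}.
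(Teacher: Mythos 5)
Your reduction to Theorem~\ref{printhm}\prettyref{item:three-summands} is the right move and is essentially what the paper intends: the last paragraph of the theorem with $r=0$ says precisely that a random $b \in P_\alpha$ cannot equal $\sum_{i} p_iq_i$ with each $p_iq_i \in R_\alpha$, which is the same as $b \notin \operatorname{Span}_K(R_\alpha)$ (any $c \in R_\alpha$ factors as a product of two principal non-units by \cite[Cor.\ 4.6]{LM2017}, and a scalar $\lambda_i$ can be absorbed into $p_i$). The paper reaches the same conclusion internally via the $\QQ$-relation and property $(*)_\alpha$ (Corollary~\ref{two terms plus n}), but deriving it as a black-box consequence of the displayed Theorem~\ref{printhm} is legitimate.

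There is, however, a genuine gap in the construction of $S$. You need $\cl(S) \setminus \{0\}$, not merely $S$, to be $\Qb$-linearly independent. Picking the elements of $S$ from a transcendence basis does not control the accumulation points: once $\alpha \geq 2$, $\cl(S)\setminus S$ contains non-trivial suprema of increasing sequences in $S$, and these are determined by the chosen sequence rather than chosen freely, so they need not lie in the transcendence basis nor be $\Qb$-independent of $S$. The fix is to run the transfinite recursion on the order type $\omega^\alpha+1$ of $\cl(S)$ itself: commit to the value of each limit point $c_\lambda$ \emph{before} choosing the successor-stage points that must converge to it, and at every stage pick a real in a nonempty open interval outside the $\Qb$-span of the (countable) set of points already committed. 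This is routine but it is the step that actually requires an argument; your one-line justification does not supply it. Note that switching to the coefficient-based criterion of randomness does not help here, since the coefficients of $\sum_{n\in F}\lambda_n b_n$ are the scaled tuples $\lambda_n(b_n)_\gamma$, which need not remain algebraically independent for arbitrary $\lambda_n \in K$.

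By contrast, the partition step that you flag as ``the main obstacle'' is actually unproblematic and does not depend delicately on the Cantor normal form. For $\alpha = \beta+1$ decompose $\omega^\alpha = \omega^\beta\cdot\omega$ into $\omega$ blocks of order type $\omega^\beta$ and distribute them over a partition of $\Nb$ into cofinal infinite pieces; for $\alpha$ limit take a cofinal sequence $\alpha_m\nearrow\alpha$ and use the intervals $[\omega^{\alpha_m},\omega^{\alpha_{m+1}})$ as blocks. In both cases each $S_n$ and each finite union $\bigcup_{n\in F}S_n$ has order type exactly $\omega^\alpha$ (bounded above since it is a subset of $S$, and below since $\omega^\alpha$ is additively indecomposable and the chosen index set is cofinal), with supremum $0$ by cofinality.

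Finally, as a comparison: the paper's own route, via Proposition~\ref{suc} and Corollary~\ref{two terms plus n}, establishes $K$-linear independence over the larger space $A_\alpha = J_\alpha + \operatorname{Span}_K(R_\alpha)$ for any tuple satisfying $\QQ$, and then uses Proposition~\ref{prop:randomness-implies-indep} to supply such tuples. That approach isolates exactly the hypothesis needed ($\QQ$), whereas yours demands that \emph{every} nontrivial $K$-linear combination be random, which is a stronger requirement on the family $\{b_n\}$ and forces the more delicate support construction. Once the construction of $S$ is carried out carefully, both routes are correct.
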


\begin{rem}
  As a special case, we find irreducible principal series of order types $\omega^2$ and $\omega^3$, as in respectively \cite{PS2006} and \cite{LM2017}.
\end{rem}
Combining the above result with techniques from \cite{LM}, we are also able to produce irreducible series that are not principal, at least for certain order types.
\begin{thm} \label{genthm}
  Let $b \in \KRR$ be such that $\sup(b)=0$ and $\ot(b)=m\omega^{\alpha}+\beta$, where $\alpha$ is as in Theorem~\ref{printhm}, $m \in \mathbb{N}$ non-zero, and $\beta < \omega^{\alpha}$. If $b$ is random, then $b$ is irreducible and so is $b+r$ for any $r$ such that $\ot(r)<\omega^{\alpha} $ and $\sup(b+r)=0$.
\end{thm}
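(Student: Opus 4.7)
The plan is to reduce Theorem \ref{genthm} to the principal case covered by Theorem \ref{printhm}. I would first attach to $b$ a principal ``leading part'' $\tilde b \in P_\alpha$ that captures its multiplicative behaviour, then convert any hypothetical nontrivial factorization of $b$ into a sum-of-products decomposition of $\tilde b$ of the kind excluded by the last clause of Theorem \ref{printhm}. Concretely, using $\ot(b) = m\omega^\alpha + \beta$ with $\beta < \omega^\alpha$ and $\sup(b) = 0$, I would produce a decomposition $b = \tilde b + b^\circ$ with $\tilde b$ principal of order type $\omega^\alpha$ and supremum $0$, and $b^\circ$ of strictly smaller ``top order type'' in the sense appropriate for the Berarducci-style reduction performed in \cite{LM}. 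Randomness of $b$ transfers to randomness of $\tilde b$, since the coefficients and real exponents appearing in $\tilde b$ form a subfamily of those of $b$.

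Suppose, for contradiction, that $b = cd$ with $c,d$ non-units in $\KRR$ and outside $J$. The identity $\sup(b) = \sup(c) + \sup(d)$ forces $\sup(c) = \sup(d) = 0$, so the decomposition from the first step applies to $c$ and $d$, yielding $c = \tilde c + c^\circ$ and $d = \tilde d + d^\circ$ with $\tilde c, \tilde d$ principal. Expanding
\[
  cd = \tilde c \tilde d + \tilde c d^\circ + c^\circ \tilde d + c^\circ d^\circ,
\]
comparing with $b = \tilde b + b^\circ$, and isolating the ``top $\omega^\alpha$'' of the support, I would obtain an identity
\[
  \tilde b = \tilde c \tilde d + \sum_{i=1}^M p_i q_i + r,
\]
where $\tilde c \tilde d$ and each $p_i q_i$ lie in $P_\alpha$, $1 < \ot(p_i) \le \ot(q_i)$, and $\ot(r) < \omega^\alpha$. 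This is exactly the configuration forbidden by Theorem \ref{printhm}\prettyref{item:three-summands} applied to the random principal series $\tilde b$, giving the contradiction. The extension to $b + r_0$ with $\ot(r_0) < \omega^\alpha$ and $\sup(b + r_0) = 0$ is then immediate, because the construction is insensitive to such additions (one has $\widetilde{b + r_0} = \tilde b$) and the same argument applies.

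The main obstacle is the second step: controlling precisely how the cross-terms $\tilde c d^\circ$, $c^\circ \tilde d$, $c^\circ d^\circ$, after splitting their supports along the same principal/residual dichotomy, combine into a finite family of principal products of order type exactly $\omega^\alpha$ together with a residual of order type $< \omega^\alpha$, all while preserving the constraints $1 < \ot(p_i) \le \ot(q_i)$. This is the bookkeeping carried out in \cite{LM} for the order types $\omega^2$ and $\omega^3$, and I expect the present generality to be accessible by the same method, with Theorem \ref{printhm} supplying the principal ingredient that was previously proved by hand for small $\alpha$.
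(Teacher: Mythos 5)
Your plan is genuinely different from the paper's, and as written it has at least two substantive gaps.

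The paper derives Theorem~\ref{genthm} from Proposition~\ref{genir}, working in the $\RV_{\deg}$ quotient of \cite{LM}: one first shows via Lemma~\ref{indrv} that the maximal finite-support divisor $p(b)$ is trivial, then reduces to proving irreducibility of $\rv(b)/p(\rv(b))$ in $\RV_{\deg}$, and finally uses the \emph{irreducibility} of $\rv(c)$ for principal $c$ with $\QQ(c)$, together with $\rv_J$-independence of \emph{all} principal pieces $b_1,\dots,b_m$ in the normal form of $b$, to force any factor of $\rv(b)/p(\rv(b))$ into $K$. Your plan instead expands a hypothetical factorization $b=cd$ into leading and residual parts and tries to read off a decomposition $\tilde b = \tilde c\tilde d + \sum_i p_iq_i + r$ forbidden by the last clause of Theorem~\ref{printhm}.

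The first gap is that the exclusion clause you invoke is only asserted for $\alpha$ of the form $\omega^{\beta_1}+\omega^{\beta_2}+k$ (case~\prettyref{item:three-summands}); the paper explicitly notes that for the longer Cantor normal forms of case~(1) only plain irreducibility, not the stronger ``not a sum of reducibles'' statement, is obtained (the $(*)_\alpha$ property is not proved there). So even if the rest of your reduction went through, it would cover only half of the $\alpha$'s in the statement. The second gap is that a factor $c$ can be a non-unit lying in $(J+K)\setminus K$ (e.g.\ $1+t^{-1}$, or any finite-support non-constant); then $v_J(c)=1$, the ``leading part'' $\tilde c$ has order type $1$, the term $\tilde c\tilde d$ is \emph{not} a product of two non-constant principal series, and the exclusion clause gives no contradiction. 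Ruling out such factors is precisely the role of $p(b)=1$ and Lemma~\ref{indrv} in the paper, and your setup never addresses it. Finally, the ``bookkeeping'' you defer is not mere bookkeeping: the cross-terms $\tilde c d^\circ$, $c^\circ\tilde d$, $c^\circ d^\circ$ are not automatically principal products with the right order types, the residual you would obtain is naturally controlled in $\deg_J$ rather than in $\ot$ (and these differ for non-principal series), and organizing all of this consistently across the $m$ leading blocks of $b$ is exactly what $\RV_{\deg}$, Lemma~\ref{indrv} and Proposition~\ref{genir} accomplish. As it stands, your outline names the target but substitutes the hard part of the argument with a hope that the method of \cite{LM} carries over, which is precisely the part that needs to be proved.
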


In \prettyref{sec:hered-indep} we define an independence relation which is a generalization of randomness, and can substitute it in all of the above.

We conjecture that in fact all random series are irreducible, and more precisely, we expect the following to hold.

\begin{conj}
  Suppose that $b \in \KRR$ is random and $\sup(b) = 0$. Then $b$ is irreducible and so is $b+r$ for any series $r$ with $ot(r)< \omega^{\deg(b)}$.
\end{conj}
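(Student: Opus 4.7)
My plan is to extend the strategy of Theorem~\ref{printhm} and Theorem~\ref{genthm} to arbitrary Cantor normal forms. Write $\ot(b) = \omega^{\beta_1} + \omega^{\beta_2} + \dots + \omega^{\beta_n}$ in Cantor normal form (so $\beta_1 \ge \beta_2 \ge \dots \ge \beta_n$ and $\deg(b) = \beta_1$), and decompose $b = b_1 + \dots + b_n$ by truncating its support at the successive critical points, so that each $b_i$ has order type $\omega^{\beta_i}$ and $b_1$ is principal with $\sup(b_1) = 0$. Any perturbation $r$ with $\ot(r) < \omega^{\beta_1}$ can be absorbed into $b_2 + \dots + b_n$ without changing $b_1$, so it suffices to treat $b$ itself. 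I would then suppose for contradiction $b = cd$ with neither factor a unit, and try to leverage randomness to rule this out.

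The main engine is item~\prettyref{item:Palpha*Pbeta} applied to leading principal parts. Writing $c$ and $d$ in their own Cantor decompositions, the leading principal parts $c^*, d^*$ must satisfy $\ot(c^*) \odot \ot(d^*) = \omega^{\beta_1}$ and $c^* d^*$ must agree with $b_1$ modulo strictly lower order type. Unfolding the product $cd$ layer by layer then expresses each $b_i$ as a finite sum of products of lower-order pieces of $c$ and $d$, plus a correction of smaller order type. Either form of randomness of $b$, namely $\Qb$-linear independence of $\cl(\supp(b)) \setminus \{0\}$ or algebraic independence of $\langle b_\gamma : \gamma \in \supp(b)\rangle$, should then force a system of constraints on the coefficients and supports of $c, d$ that has no solution, once the trivial possibility that one factor is a unit has been excluded.

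The main obstacle, and the reason the statement remains a conjecture, is a combinatorial explosion when many of the $\beta_i$ coincide. The hypothesis $\beta_3 > \beta_4 > \dots > \beta_n$ in Theorem~\ref{printhm} is exactly what keeps the list of possible Cantor-term pairings $(\ot(p_i), \ot(q_i))$ contributing to each $b_i$ finite and tractable; dropping it, the piecewise bookkeeping used in \cite{PS2006,LM2017} and in the present paper no longer terminates. I would attempt to cut through this using the hereditary independence relation of \prettyref{sec:hered-indep}, which is designed to be preserved under truncations and products: the goal is to show that hereditary independence of $b$ propagates to the principal parts of any putative $c$ and $d$, and then to run a global $K$-linear argument in the span of their coefficients, uniformly in $n$ and in the number of equal leading exponents, rather than layer by layer. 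Making this propagation argument rigorous is, I expect, essentially equivalent to the conjecture itself, and is the one step where the existing techniques in the paper do not obviously suffice.
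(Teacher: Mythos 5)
This statement is an open \emph{conjecture} in the paper; there is no proof of it to compare against, and your proposal itself, to your credit, stops short of claiming one. What you have written is a plan whose crucial step — that hereditary independence of $b$ ``propagates'' to the principal parts of any putative factors $c,d$ so that a single global $K$-linear argument can replace the layer-by-layer induction — is left unjustified, and you yourself say it is ``essentially equivalent to the conjecture.'' So there is a genuine gap here, exactly where you flagged it.

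A few further points to sharpen your understanding of where and why things get stuck. First, the diagnosis that the condition $\beta_3 > \beta_4 > \dots > \beta_n$ in Theorem~\ref{printhm} keeps the bookkeeping finite is close but not quite the right mechanism: the strict inequalities are what make Lemma~\ref{strprin} applicable (you can always isolate a factor $q$ with $\deg_J^p(q) > \deg_J^p(p)$), and when two Cantor exponents coincide this fails and one must fall back on the more delicate Proposition~\ref{princase}, which only yields irreducibility rather than the linear-independence property $(*)_\alpha$. Since $(*)_\alpha$ is precisely the statement that drives the induction, losing it at the three-term stage is what blocks the argument from continuing; this is why the paper singles out $3\omega+2$ as the first unknown case. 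Second, your claim that a perturbation $r$ with $\ot(r) < \omega^{\deg(b)}$ ``can be absorbed into $b_2 + \dots + b_n$ without changing $b_1$'' is optimistic: in the paper's framework, passing to $b + r$ changes the normal form, and the reduction to principal parts goes through Lemma~\ref{indrv} and Proposition~\ref{genir}, which require linear independence of the $\rv$-classes of \emph{all} the maximal-degree principal components of $b + r$, not just of a single $b_1$. These are not fatal objections to the overall strategy, but they illustrate that making the ``propagation'' step rigorous requires new ideas beyond the ones in the paper — which is why the statement remains a conjecture.
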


Here $\deg(b)$ is the maximum ordinal $\alpha$ such that $\omega^\alpha \leq \ot(b)$.

\begin{conj}
  $\operatorname{Span}_K(R_{\alpha})$ is infinite co-dimensional in $\operatorname{Span}_K(P_{\alpha})$ as a $K$-vector space for any $\alpha >0$.
\end{conj}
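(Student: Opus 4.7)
The plan is to reduce the conjecture to a generalization of Theorem~\ref{printhm} covering every $\alpha > 0$. The case $\alpha = \omega^\beta$ is immediate, since property \prettyref{item:Palpha*Pbeta} forces $R_\alpha = \varnothing$ while $\operatorname{Span}_K(P_\alpha)$ is infinite-dimensional. For decomposable $\alpha$, the same reasoning that gives the Corollary from the second half of Theorem~\ref{printhm} will yield infinite codimension, provided one knows that every random $b \in P_\alpha$ cannot be written as $\sum_{i=1}^m p_i q_i + r$ with $p_i q_i \in R_\alpha$ and $\ot(r) < \omega^\alpha$.

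To establish this extended form of Theorem~\ref{printhm}, I would proceed by transfinite induction on $\alpha$, with base case $\alpha = \omega^\beta$ (Berarducci, \cite[Thm.\ 10.5]{Ber2000}) and the cases covered by the present paper as the first nontrivial steps. Given a random $b \in P_\alpha$ and a hypothetical decomposition $b = \sum_{i=1}^m p_i q_i + r$ with $p_i \in P_{\delta_i}$, $q_i \in P_{\varepsilon_i}$, $\delta_i \oplus \varepsilon_i = \alpha$, the idea is to apply the critical-point valuation calculus of \cite{Ber2000} together with the hereditary independence relation of \prettyref{sec:hered-indep} to transfer the independence hypothesis on $b$ to the individual factors $p_i, q_i$, and then invoke the inductive hypothesis at the strictly smaller ordinals $\delta_i, \varepsilon_i$.

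The central obstacle is combinatorial: as the Cantor complexity of $\alpha$ grows, the set of Hessenberg splittings $\alpha = \delta \oplus \varepsilon$ proliferates, and the clean leading-term extraction used in the cases treated in \prettyref{item:three-summands} of the theorem becomes markedly more intricate. A naive induction on the number of Cantor summands is unlikely to close, because the Cantor shapes of $\delta_i, \varepsilon_i$ can vary wildly and include as many summands as $\alpha$ itself. I expect that the inductive hypothesis will have to be strengthened to a statement phrased purely in terms of the hereditary independence relation rather than in terms of randomness, in such a way that independence is manifestly preserved under factorization; pinning down the right strengthening and the right ordinal invariant is, in my view, the heart of the problem.

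Once the full Theorem~\ref{printhm} is available for every $\alpha > 0$, the codimension conclusion follows as in the present Corollary: one exhibits infinitely many classes in the quotient $\operatorname{Span}_K(P_\alpha)/\operatorname{Span}_K(R_\alpha)$ that remain $K$-linearly independent by choosing a sequence of random principal series whose underlying randomness data (support closures, or algebraically independent coefficient tuples) are pairwise generic, and observing via the extended theorem that no nontrivial $K$-linear combination of them can be absorbed into $\operatorname{Span}_K(R_\alpha)$.
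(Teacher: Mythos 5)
This statement is a \emph{Conjecture} in the paper: the authors do not prove it, and indeed flag it as open. Your ``proof'' is in fact a research plan with an acknowledged gap (``pinning down the right strengthening \dots is, in my view, the heart of the problem''), so it cannot be judged against a reference proof; but the plan as written also misreads the mechanism that makes the paper's partial results work. You propose to ``transfer the independence hypothesis on $b$ to the individual factors $p_i,q_i$'' and then apply induction at $\delta_i,\varepsilon_i$. The paper's arguments never do this, and for good reason: from a hypothetical identity $b=\sum p_iq_i+r$ there is no way to deduce any randomness or $\QQ$-independence of the unknown factors $p_i,q_i$. What the paper actually does (Propositions~\ref{suc}, \ref{two terms}, \ref{strict}, \ref{princase}) is apply the Leibniz-type rules (Propositions~\ref{leib}, \ref{strprin}) to \emph{truncations} $b^{|\gamma}$ at residual points, obtain that these truncations lie in a finitely generated module over $K$ or $J_{\omega^{\alpha_1}}$ spanned by the $q_i$, extract a $K$-linear dependence of the $b^{|\gamma_i}$ modulo $A_{\deg_J^r(b)}$, and then contradict the hereditary independence $\QQ$ of $b$. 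The inductive object is the family of truncations of $b$, not the factors.

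A second gap: even granting your ``extended Theorem~\ref{printhm} for all $\alpha$,'' irreducibility of random elements is not enough for the codimension statement. The paper's codimension corollary is stated only for $\alpha$ as in \prettyref{item:three-summands}, precisely because there the stronger property $(*)_\alpha$ is established (via Propositions~\ref{prop:base}, \ref{suc}, \ref{two terms}), giving linear independence over $A_\alpha=J_\alpha+\operatorname{Span}_K(R_\alpha)$. For the $\alpha$ of form (1) in Theorem~\ref{printhm} the paper only proves irreducibility, not $(*)_\alpha$, and correspondingly makes no codimension claim. So your plan would need to prove $(*)_\alpha$ for all $\alpha>0$, which is strictly stronger than what you describe; the obstruction you identify (proliferation of Hessenberg splittings) is real, and the paper itself records (just before \prettyref{strprin}) that already $\alpha=3\omega+2$ is open.
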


\section{Preliminaries}
We assume that the reader has familiarity with the class $\on$ of ordinal numbers and the classical (non-commutative) operations on them, but we will give a minimal account of Hessenberg's commutative operations.

Recall that for all $\alpha \in \on$, there is a maximum $\beta$ such that $\omega^\beta \leq \alpha$ and a unique $\gamma$ such that $\alpha = \omega^\beta + \gamma$. By repeating the argument, we find a unique finite sequence $\beta_1 \geq \beta_2 \geq \ldots \geq \beta_n \geq 0$ of ordinals such that
\begin{equation}
  \label{eq:cantor-normal-form}
  \alpha = \omega^{\beta_1} + \ldots + \omega^{\beta_n}.
\end{equation}
The expression on the right-hand side is called \textbf{Cantor normal form} of $\alpha$. We let $\beta_1$ be the \textbf{Cantor degree of $\alpha$}, denoted by $deg(\alpha)$.

Given $\alpha = \omega^{\gamma_1} + \omega^{\gamma_2} + \ldots + \omega^{\gamma_n}$ and $\beta = \omega^{\gamma_{n+1}} + \omega^{\gamma_{n+2}} + \ldots + \omega^{\gamma_{n+m}}$ in Cantor normal form, let $\pi$ be a permutation of the integers $1, \ldots, n+m$ such that $\gamma_{\pi(1)} \geq \ldots \geq \gamma_{\pi(n+m)}$. Then $\alpha \oplus \beta$ is defined to be $\omega^{\gamma_{\pi(1)}}+ \ldots + \omega^{\gamma_{\pi(n)}} $, and $\alpha \odot \beta $ to be $\bigoplus_{1\le i \le n,n+1 \le j \le n+m}\omega^{\gamma_i \oplus \gamma_j} $. Note that by definition $\omega^{\alpha} \odot \omega^{\beta} = \omega^{\alpha \oplus \beta}$ for all $\alpha, \beta \in \on$. Moreover, $\omega^{\beta_1} + \cdots + \omega^{\beta_n} = \omega^{\beta_1} \oplus \cdots \oplus \omega^{\beta_n}$ if (and only if) the left hand side is in Cantor normal form. These are \textbf{Hessenberg's natural sum and product}.

We remark that in Cantor normal form, Hessenberg's operations can be interpreted as sum and product for polynomials in the variables $\omega^{\omega^\alpha}$, for $\alpha \in \on$. We summarise this consideration with the isomorphism
\[ (\on, \oplus, \odot) \cong \left(\Nb\left[\omega^{\omega^0},\omega^{\omega^1},\dots,\omega^{\omega^\alpha},\dots\right], +, \cdot\right). \]

An ordinal is \textbf{additively principal} if it cannot be written as a sum of two strictly smaller ordinals (equivalently, it is of the form $\omega^\alpha$) and \textbf{multiplicatively principal} if it cannot be written as a product of two strictly smaller ordinals (equivalently, it is of the form $\omega^{\omega^\alpha}$).

\begin{defn}[{\cite[Def.\ 3.3.6]{LM}}]\label{def:normal-form}
  Given $b \in K((\Rb))$, we call the sum
  \[ b = b_1t^{x_1} + \dots + b_nt^{x_n} \]
  the \textbf{normal form} of $b$ when:
  \begin{itemize}
    \item $x_1 \le \dots \le x_n$;
    \item $b_i$ is principal for all $i = 1, \dots, n$;
    \item $\ot(b_1) \le \dots \le \ot(b_n)$;
    \item $x_i + \supp(b_i) < x_{i+1} + \supp(b_{i+1})$ for all $i = 1, \dots, n-1$.
  \end{itemize}
\end{defn}
By \cite[Prop.\ 3.3.7]{LM}, every $b \in \KRR $ has a unique normal form.

Let $J$ be the (proper) ideal of $\KRR$ generated by the series of the form $t^x$ for $x < 0$. Given $b \in \KRR$, the \textbf{ordinal value} of $b$ (\cite[p.\ 558]{Ber2000}) is
\[ v_J(b) \coloneqq \begin{cases}
    0                                      & \text{if } b \in J,                   \\
    1                                      & \text{if } b \in (J + K) \setminus J, \\
    \min\{\ot(c) : c \equiv b \mod J + K\} & \text{otherwise}.
  \end{cases} \]
The keystone of \cite{Ber2000} is that $v_J$ is a multiplicative semi-valuation.

\begin{fact}[{\cite[Lem.\ 5.5,\ Thm.\ 9.7]{Ber2000}}]
  For all $b, c \in \KRR$ we have:
  \begin{itemize}
    \item $v_J(b + c) \leq v_J(b) \oplus v_J(c)$;
    \item $v_J(bc) = v_J(b) \odot v_J(c)$;
    \item $v_J(b) = 0$ if and only if $b \in J$.
  \end{itemize}
\end{fact}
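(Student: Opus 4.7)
The third clause is immediate from the case distinction in the definition: $v_J(b) = 0$ occurs precisely when $b \in J$. For the sub-additivity, choose representatives $b'$ and $c'$ of $b$ and $c$ modulo $J+K$ such that $\ot(b') = v_J(b)$ and $\ot(c') = v_J(c)$ (with the obvious boundary conventions when $v_J \leq 1$). Since $J+K$ is closed under addition, $b+c \equiv b'+c' \bmod J+K$, and $\supp(b'+c') \subseteq \supp(b') \cup \supp(c')$. Carruth's theorem on the order type of a union of two well-ordered chains then yields
\[
  \ot(b'+c') \leq \ot(b') \oplus \ot(c') = v_J(b) \oplus v_J(c),
\]
so $v_J(b+c) \leq v_J(b) \oplus v_J(c)$.

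For $v_J(bc) = v_J(b) \odot v_J(c)$, the case $b \in J$ or $c \in J$ is immediate since $J$ is an ideal and $\odot$ has $0$ as an absorbing element. Otherwise, for the upper bound, I would write $b = b' + k_1 + j_1$ and $c = c' + k_2 + j_2$ with $k_i \in K$, $j_i \in J$, and $\ot(b') = v_J(b)$, $\ot(c') = v_J(c)$. Expanding and using that $J$ is an ideal gives $bc \equiv b'c' + k_1 c' + k_2 b' \bmod J+K$; the sub-additivity already proved then reduces the problem to bounding $\ot(b'c')$ by $v_J(b) \odot v_J(c)$. Decomposing $b'$ and $c'$ via their normal forms into finite sums of principal series, this bound follows from Berarducci's multiplicativity of order type on principal series, item~\prettyref{item:Palpha*Pbeta}.

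The main obstacle is the reverse inequality $v_J(bc) \geq v_J(b) \odot v_J(c)$: multiplication must not create unexpected cancellation modulo $J+K$. My plan is to extract, for each $b \notin J$, a canonical ``leading principal part'' $b^\sharp$ of order type $v_J(b)$, and then to prove $(bc)^\sharp \equiv b^\sharp c^\sharp \bmod J$; combined with the multiplicativity of $\ot$ on principal series, this forces $v_J(bc) \geq \ot(b^\sharp c^\sharp) = v_J(b) \odot v_J(c)$. The subtle point is that cross-products of lower-order principal summands of $b$ and $c$ could a priori cancel against $b^\sharp c^\sharp$ modulo $J+K$ and drop the order type. To rule this out, one exploits support suprema: since $b^\sharp$ and $c^\sharp$ have supports accumulating at $0$ from below and lie outside $J$, the product $b^\sharp c^\sharp$ does too, so any lower-order cancellation must occur strictly below its support supremum and therefore land inside $J$. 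This combinatorial control over supports of products of principal series is essentially the content of \cite[\S\S~8--9]{Ber2000}, and I do not expect a shortcut avoiding such structural analysis.
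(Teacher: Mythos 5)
The paper gives no proof of this Fact: it is quoted verbatim from Berarducci \cite{Ber2000}, so there is no internal argument to compare against. Your sketch of the easy parts is sound: the third clause is immediate from the definition, and your sub-additivity argument (pick representatives of minimal order type modulo $J+K$, add, apply Carruth's bound on the union of two well-ordered sets) is correct modulo routine boundary cases when one of $b$, $c$ lies in $J$ or $J+K$. For the upper bound $v_J(bc)\leq v_J(b)\odot v_J(c)$, note that you do not need item~\prettyref{item:Palpha*Pbeta} at all; Carruth's companion bound on the order type of the Minkowski sum $\supp(b')+\supp(c')$ already gives $\ot(b'c')\leq \ot(b')\odot\ot(c')$ directly, which is both simpler and avoids the dependency problem discussed next.

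The genuine gap is a circularity in the lower bound. Item~\prettyref{item:Palpha*Pbeta}, i.e.\ Berarducci's Corollary~9.9 ($P_\alpha\cdot P_\beta\subseteq P_{\alpha\oplus\beta}$), is stated in \cite{Ber2000} as a \emph{consequence} of Theorem~9.7 — the very multiplicativity of $v_J$ you are trying to establish. So when you write $\ot(b^\sharp c^\sharp)=v_J(b)\odot v_J(c)$ by appealing to item~\prettyref{item:Palpha*Pbeta}, you are assuming the conclusion. You are of course correct that the real combinatorial content lives in Berarducci's \S\S~8--9 (analysis of residual points and of the support of a product of principal series), and that this simultaneously yields Theorem~9.7 and Corollary~9.9; your heuristic about the support supremum forcing any cancellation to ``land inside $J$'' gestures at the right phenomenon but is not an argument — cancellation can and does occur arbitrarily close to $0$, and ruling out a drop of $v_J$ precisely requires the §§ 8--9 machinery. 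If you want to present this as a proof rather than a citation, you must re-derive that machinery (in particular Berarducci's Lemma~9.6 and Theorem~9.7) rather than invoke Corollary~9.9.
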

The values of $v_J$ are all of the form $\omega^\alpha$ (\cite[Rem.\ 5.3]{Ber2000}), thus we add the following notation: let $\deg_J(b)$ denote the Cantor degree of $v_J(b)$, where by convention we set $\deg_J(b) = -\infty$ when $b \in J$, and $\omega^{-\infty} = 0$. Thus we have $v_J(b) = \omega^{\deg_J(b)}$, and we may rephrase the above properties as:
\begin{itemize}
  \item $\deg_J(b+c) \leq \max\{\deg_J(b),\deg_J(c)\}$;
  \item $\deg_J(bc) = \deg_J(b) \oplus \deg_J(c)$;
  \item $\deg_J(b) = -\infty$ if and only if $\deg_J(b) = -\infty$.
\end{itemize}

We note that to prove $\ot(bc) = \ot(b) \odot \ot(c)$ for $b$, $c$ principal, it was stated with no proof in \cite{Ber2000} that there are ``few cancellation'' in the product of two elements of $\KRR$.
We observe that not only the above is not required (and in fact not used) in order to prove the multiplicativity of $v_J$, it is neither true.

\begin{exa}
  Let $S= \{s_1, s_2, \ldots \, \}$ be a strictly increasing sequence of $\mathbb{Q}$-linearly independent real numbers such that $\sup(S)=0$. Let us define two series $b = \sum_{n \in \mathbb{N}} t^{s_n}$ and $c =  \sum_{n \in \mathbb{N}}(-1)^n  t^{s_n}$. If $m$ and $n$ have different parities, the coefficient of $t^{s_m+s_n}$ in $bc$ is $(-1)^m + (-1)^n = 1 - 1 = 0$. Consider $T= \{ s_m + s_n : m - n \text{ is odd}\}$, then for every $p \in T$ we have $p \notin \supp(bc)$, thus cancellations occurs on a set of order type $\ot(T)=\omega^2$, which is the same as $\ot(b) \odot \ot(c)$.
\end{exa}

The following notions are fundamental in our work.
\begin{defn}
  Given
  $b = \sum_{\beta} b_x t^{x} \in K((\mathbb{R}^{\leq 0}))$
  and $\gamma \in \mathbb{R}^{\leq 0}$, we define:
  \begin{itemize}
    \item the \textbf{truncation} of $b$ at $\gamma$ is
          $b_{\vert \gamma} \coloneqq \sum_{x \leq \gamma } b_x t^{x}$,
    \item the \textbf{translated truncation} of $b$ at $\gamma$ is
          $b^{\vert \gamma} \coloneqq t^{-\gamma} b_{\vert \gamma}$.
  \end{itemize}
  The equivalence class $b^{\vert \gamma} + J$ is the \textbf{germ of
    $b$ at $\gamma$}.
\end{defn}

\begin{defn}
  Let $b \in K((\mathbb{R}^{\le 0})) \setminus J$. The \textbf{$J$-critical point} of $b$, denoted by $\crit_J(b)$, is the minimal $\gamma \in \supp(b)$ such that for every $\gamma < \delta < 0$ we have $v_J(b^{|\delta}) < v_J(b)$.
\end{defn}
We note that if $b \in P_{\alpha}$, then $\crit_J(b)=\min(\supp(b))$.
\begin{rem}
  One should not confuse this with the notion of \emph{critical point} $\crit(b)$, which is the minimum $\gamma \in \mathbb{R}^{\leq 0}$ such that $v_J(b^{|\gamma})$ is maximum possible \cite[Def.\ 10.2]{Ber2000}. For example for $b=\sum_{n \in \Nb} t^{-1 - \frac{1}{n+1}} + \sum_{n \in \Nb} t^{-\frac{1}{n+1}} + \sum_{n \in \Nb} t^{-2 - \frac{1}{n+1}}$, we have that $v_J(b)=\omega=v_J(b^{|-1})=v_J(b^{|-2})$, and one can verify that $\crit(b)=-2$ and $\crit_J(b)=-1$ .
\end{rem}
Given $b \in \KRR $ with $v_J (b) > 1$, we know that $v_J (b)$ has the form $\omega^\beta$ for some ordinal $\beta >0$. From the Cantor normal form of $\beta$ it follows that $v_J(b)$ can be written uniquely as a product $\omega^{\omega^{\beta_1}} \cdots\omega^{\omega^{\beta_n}}$, where $\beta_1 \geq \beta_2 \geq \ldots  \geq \beta_n$. We define ({\cite[Def.\ 6.4]{Ber2000}}):
\begin{enumerate}
  \item $ v_J^p (b) \coloneqq \omega^{\omega^\beta_n}$ = the \textbf{principal value} of $b$,
  \item $ v_J^r (b) \coloneqq \omega^{\omega^{\beta_1}} \cdots\omega^{\omega^{\beta_{n-1}}}$ = the \textbf{residual value} of $b$.
\end{enumerate}
For instance, if $v_J ( b ) = \omega^3$, then $v_J^p ( b ) = \omega$ and $v_J^r ( b ) = \omega^2$. Note that $v_J(b) = v_J^r(b)v_J^p(b)$. For the sake of readability, we introduce the following notation to remove the base $\omega$.
\begin{enumerate}
  \item $\deg_J^p (b) \coloneqq \omega^{\beta_n}$,
  \item $\deg_J^r (b) \coloneqq \omega^{\beta_1} + \cdots + \omega^{\beta_{n-1}}$.
\end{enumerate}
We have $\deg_J(b) = \deg_J^r(b) + \deg_J^p(b)$. Note that $\deg_J^p(b)$ is \textbf{additively principal}: the ordinals strictly less than $\deg_J^p(b)$ are closed under addition.

\begin{defn}
  Let $b \in K((\mathbb{R}^{\leq 0}))$. We say that $\gamma \in \cl(\supp(b)) - \{0\}$ is a \textbf{big point} of $b$ if $\deg_J(b^{\vert\gamma})\ge \deg_J^r(b)$ and $\gamma > \crit_J(b)$. The set of all big points of $b$ is denoted by $ \BigP (b)$.

  More generally, let $\BigP^{\alpha}(b)$ denote all the numbers $\gamma \in \cl(\supp(b)) - \{0\}$ such that $\deg_J(b^{|\gamma}) \ge \alpha $ and $\gamma > \crit_J(b)$.

  We say that $\gamma \in \cl(\supp(b)) - \{0\}$ is a \textbf{residual point} of $b$ if $\deg_J(b^{\vert\gamma})=\deg_J^r(b) $. The set of all these point $\Res(b)$ was defined in \cite[Def.\ 6.6]{Ber2000} (where it is called $X(b)$).
\end{defn}

\begin{rem}
  By construction, the big points of a series must accumulate to $0$. Moreover, they are accumulation points of $\cl(\supp(b))$ as soon as $\deg_J^r(b) > 0$.
\end{rem}

It turns out that translated truncations behave like a sort of
`generalised coefficients', as they satisfy the following equation.

\begin{prop}[{\cite[Lem.\ 7.5(2)]{Ber2000}}] \label{conv}
  For all $a, b \in K((\mathbb{R}^{\leq 0}))$ and
  $\gamma \in \mathbb{R}^{\leq 0}$ we have:
  \[
    {(ab)}^{\vert\gamma} \equiv \sum_{\delta + \varepsilon = \gamma}
    a^{\vert\delta} b^{\vert\varepsilon} \mod J \quad
    \textrm{\emph{(convolution formula)}}.
  \]
\end{prop}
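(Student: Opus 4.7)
The plan is to show the difference
\[
D := (ab)^{\vert\gamma} - \sum_{\delta + \varepsilon = \gamma} a^{\vert\delta} b^{\vert\varepsilon}
\]
lies in $J$, i.e. has $\sup(\supp(D)) < 0$. First I would make the sum precise: since $a^{\vert\delta} = t^{-\delta} a_{\vert\delta}$ belongs to $J$ unless $\delta \in \cl(\supp(a))$ (and analogously for $b^{\vert\varepsilon}$), modulo $J$ only pairs in
\[
C := \{(\delta, \varepsilon) \in \cl(\supp(a)) \times \cl(\supp(b)) : \delta + \varepsilon = \gamma\}
\]
contribute. Noting that the closures inherit well-orderedness from their supports, a standard descending-chain argument shows $C$ is finite: an infinite family of pairs in $A \times B$ summing to a fixed value, with $A, B$ well-ordered, would yield an infinite descending sequence in one factor.

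Next I would expand coefficients. Using $(a^{\vert\delta})_u = a_{u+\delta}$ for $u \leq 0$ and its analogue for $b$, the coefficient of $t^y$ in $D$ (for $y \leq 0$) becomes
\[
D_y = \sum_{\substack{(\alpha, \beta) \in \supp(a) \times \supp(b) \\ \alpha + \beta = y + \gamma}} \bigl(1 - N(\alpha, \beta)\bigr)\, a_\alpha b_\beta,
\]
where $N(\alpha, \beta) := |\{(\delta, \varepsilon) \in C : \delta \geq \alpha, \varepsilon \geq \beta\}|$. For $(\alpha, \beta)$ with $\alpha + \beta = \gamma$ one checks $N = 1$, so these contributions vanish. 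Hence the whole argument reduces to the following key claim: there exists $\eta > 0$ such that $N(\alpha, \beta) = 1$ whenever $(\alpha, \beta) \in \supp(a) \times \supp(b)$ and $\alpha + \beta \in (\gamma - \eta, \gamma)$; granted this, $D_y$ vanishes on $(-\eta, 0]$ and $D \in J$.

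For the claim, enumerate $C = \{(\delta_i, \varepsilon_i)\}_{i=1}^k$ with $\delta_1 < \cdots < \delta_k$ and let $\mu := \min_i (\delta_{i+1} - \delta_i)$. Uniqueness is easy: two dominators $(\delta_i, \varepsilon_i), (\delta_j, \varepsilon_j)$ with $i < j$ force $\alpha + \beta \leq \delta_i + \varepsilon_j = \gamma - (\delta_j - \delta_i) \leq \gamma - \mu$, so for $\eta < \mu$ at most one dominator can exist. Existence is more delicate: a compactness argument (every sequence $(\alpha_n, \beta_n) \in \supp(a) \times \supp(b)$ with $\alpha_n + \beta_n \to \gamma$ has a subsequence converging to a pair necessarily in $C$) shows that if $\alpha + \beta$ is close enough to $\gamma$, then $(\alpha, \beta)$ lies within arbitrarily small distance of some $(\delta^*, \varepsilon^*) \in C$. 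Well-orderedness of $\supp(a)$ supplies a strictly positive gap $\sigma^+(\delta^*)$ between $\delta^*$ and the next element of $\supp(a)$ above it, so for $\alpha$ close enough to $\delta^*$ one is forced into $\alpha \leq \delta^*$, and similarly $\beta \leq \varepsilon^*$; thus $(\delta^*, \varepsilon^*)$ dominates $(\alpha, \beta)$ and $N \geq 1$, which combined with uniqueness gives $N = 1$.

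The hard part will be the uniformity of the existence step: one must pick a single $\eta$ that works for all pairs $(\alpha, \beta) \in \supp(a) \times \supp(b)$ with $\alpha + \beta$ close to $\gamma$, rather than one depending on the particular pair. This is exactly where finiteness of $C$ becomes essential, making $\mu$ and the minima $\min_i \sigma^+(\delta_i)$, $\min_i \sigma^+(\varepsilon_i)$ all strictly positive so that a common lower bound $\eta > 0$ can be extracted.
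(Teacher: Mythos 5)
The paper does not prove this statement; it is cited as a black box from Berarducci \cite[Lem.\ 7.5(2)]{Ber2000}, so there is no in-text argument to compare yours against. Your direct proof is correct: the restriction to the finite set $C \subseteq \cl(\supp(a)) \times \cl(\supp(b))$ is what gives the right-hand sum a well-defined meaning modulo $J$ (and finiteness of $C$ does follow from well-ordering by the descending-chain argument); the coefficient identity $D_y = \sum_{\alpha + \beta = y + \gamma} (1 - N(\alpha,\beta)) a_\alpha b_\beta$ is right; and since $J = \{c : \sup(\supp(c)) < 0\} \cup \{0\}$, the conclusion reduces exactly to your key claim that $N \equiv 1$ on the strip $\gamma - \eta < \alpha + \beta \leq \gamma$ for some $\eta > 0$. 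Both halves of that claim hold: uniqueness of the dominator follows from the positive spacing $\mu$ between consecutive first coordinates of $C$, and existence from sequential compactness of the bounded sets $\supp(a)$, $\supp(b)$ together with the strictly positive gap in $\supp(a)$ (resp.\ $\supp(b)$) just above each $\delta^*$ (resp.\ $\varepsilon^*$), which well-ordering guarantees.

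One small adjustment to the part you flag as delicate: the existence step does not actually need the uniform lower bound $\min_i \sigma^+(\delta_i)$, hence does not need finiteness of $C$ at all. Argue by contradiction: a sequence $(\alpha_n, \beta_n) \in \supp(a) \times \supp(b)$ with $\alpha_n + \beta_n \to \gamma^-$ and $N(\alpha_n,\beta_n) = 0$ would, after passing to a subsequence, converge to some $(\delta^*, \varepsilon^*) \in C$; for $n$ large the two gaps $\sigma^+(\delta^*)$, $\sigma^+(\varepsilon^*)$ then force $\alpha_n \leq \delta^*$ and $\beta_n \leq \varepsilon^*$, i.e.\ $(\delta^*,\varepsilon^*)$ dominates $(\alpha_n,\beta_n)$, a contradiction. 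Finiteness of $C$ is genuinely used only in the uniqueness step, to make $\mu > 0$.
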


And one further more may obtain a kind of a Leibniz rule.
\begin{prop} [{\cite[Lem.\ 7.7]{Ber2000}}] \label{leib}
  Let $b,c \in \KRR$ such that $\deg_J^p(b) \le \deg_J^p(c)$. Then for every $\gamma$ sufficiently close to $0$ we have $(bc)^{|\gamma} = b^{|\gamma}c+c^{|\gamma}b +r $ where $\deg_J(r) < \deg_J^r(b) \oplus \deg_J(c) < \deg_J(bc)$.
\end{prop}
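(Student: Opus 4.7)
The plan is to unfold the convolution formula of Proposition~\ref{conv} at $\gamma$ and isolate the dominant contributions. Expanding
\[ (bc)^{|\gamma} \equiv \sum_{\delta + \varepsilon = \gamma} b^{|\delta} c^{|\varepsilon} \mod J, \]
the two extreme terms $\delta = 0$ and $\varepsilon = 0$ produce exactly $b^{|\gamma} c$ and $c^{|\gamma} b$ (using $b^{|0} = b$ and $c^{|0} = c$), so the remainder $r$ is a finite sum of ``middle'' terms $b^{|\delta} c^{|\varepsilon}$ with $\delta, \varepsilon < 0$, $\delta + \varepsilon = \gamma$, together with a $J$-term (which contributes $\deg_J = -\infty$). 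Finiteness of this middle sum follows from the well-orderedness of $\supp(b)$ and $\supp(c)$, so it suffices to bound each middle $\deg_J(b^{|\delta} c^{|\varepsilon})$ individually.

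Restricting to $\gamma > \max(\crit_J(b), \crit_J(c))$ forces every middle $\delta, \varepsilon \in (\gamma, 0)$ to satisfy $\delta > \crit_J(b)$ and $\varepsilon > \crit_J(c)$; by the definition of the $J$-critical point one then has the strict inequalities $\deg_J(b^{|\delta}) < \deg_J(b)$ and $\deg_J(c^{|\varepsilon}) < \deg_J(c)$.

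The heart of the argument is to upgrade these to the sharper bound $\deg_J(b^{|\delta}) \oplus \deg_J(c^{|\varepsilon}) < \deg_J^r(b) \oplus \deg_J(c)$. Writing $\deg_J(b) = \deg_J^r(b) + \deg_J^p(b)$ with $\deg_J^p(b)$ additively principal, every ordinal strictly below $\deg_J(b)$ is either strictly below $\deg_J^r(b)$ or of the form $\deg_J^r(b) \oplus \xi$ for some $\xi < \deg_J^p(b)$; the analogous dichotomy holds for $\deg_J(c^{|\varepsilon})$. Strict monotonicity of $\oplus$ handles the cases in which $\delta$ is not a big point of $b$ or $\varepsilon$ is not a big point of $c$ immediately, leaving the combined case
\[ \deg_J(b^{|\delta}) \oplus \deg_J(c^{|\varepsilon}) = \deg_J^r(b) \oplus \deg_J^r(c) \oplus (\xi \oplus \eta), \]
with $\xi < \deg_J^p(b)$ and $\eta < \deg_J^p(c)$. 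The hypothesis $\deg_J^p(b) \le \deg_J^p(c)$ places both $\xi$ and $\eta$ strictly below the additively principal ordinal $\deg_J^p(c) = \omega^\beta$, which is therefore closed under Hessenberg sum; hence $\xi \oplus \eta < \deg_J^p(c)$, and the right-hand side is strictly below $\deg_J^r(b) \oplus \deg_J^r(c) \oplus \deg_J^p(c) = \deg_J^r(b) \oplus \deg_J(c)$. The final inequality $\deg_J^r(b) \oplus \deg_J(c) < \deg_J(bc)$ is immediate from $\deg_J^p(b) > 0$.

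The main obstacle is exactly this combined case: when both $\delta$ and $\varepsilon$ are big points, neither factor individually drops below its residual degree, and the strict bound survives only through the closure of the additively principal ordinal $\deg_J^p(c)$ under Hessenberg sum. This is precisely where the asymmetric hypothesis $\deg_J^p(b) \le \deg_J^p(c)$ becomes indispensable, and it explains why the result is not symmetric in $b$ and $c$.
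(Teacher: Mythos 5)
Your argument is correct and is, in substance, the standard proof of Berarducci's Leibniz rule: expand via the convolution formula, peel off the two boundary terms $b^{|\gamma}c$ and $bc^{|\gamma}$, and bound the finitely many interior terms $b^{|\delta}c^{|\varepsilon}$ with $\gamma<\delta,\varepsilon<0$ using the decomposition of any ordinal below $\deg_J(b)$ as either $<\deg_J^r(b)$ or $\deg_J^r(b)\oplus\xi$ with $\xi<\deg_J^p(b)$, together with closure of the additively principal ordinal $\deg_J^p(c)$ under $\oplus$. One small inaccuracy: the sub-case where $\delta$ is a big point of $b$ but $\varepsilon$ is \emph{not} a big point of $c$ (so $\deg_J(b^{|\delta})=\deg_J^r(b)\oplus\xi$ with $\xi<\deg_J^p(b)$ and $\deg_J(c^{|\varepsilon})<\deg_J^r(c)$) is not ``immediate from strict monotonicity of $\oplus$'' as you claim --- one still needs $\xi<\deg_J^p(c)$, i.e.\ the hypothesis $\deg_J^p(b)\le\deg_J^p(c)$, since $\deg_J(b)\oplus\deg_J^r(c)\le\deg_J^r(b)\oplus\deg_J(c)$ is equivalent to that hypothesis; only the sub-case where $\delta$ itself fails to be a big point of $b$ is truly hypothesis-free. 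With that fixed, the proof is complete and matches the cited argument.
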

\begin{defn}
  For every $\alpha$ we let $J_{\alpha} \coloneqq \{b \in \KRR : \deg_J(b) < \alpha\} $. $J_{\alpha}$ is a $K$-vector space, and for $\alpha$ additively principal also a ring, which would allow the use of $J_{\alpha}$-linear combinations in our proofs.
\end{defn}

In this work we also make use of the \emph{degree valuation}, in order to prove irreducibility results for non-principal elements.
\begin{defn}[{\cite[p.\ 5]{LM}}]
  Given $b \in \KRR$ with $b \neq 0$, let the \textbf{degree} of $b$, denoted by $\deg(b)$, be the Cantor degree of $\ot(b)$. We also let $\deg(0) \coloneqq -\infty$.
\end{defn}

For instance, $t^{-\sqrt{2}} + t^{-1} + 1$ has degree $0$, while $\sum_{n \in \Nb} t^{-1 - \frac{1}{n+1}} + \sum_{n \in \Nb} t^{-\frac{1}{n+1}}$ has degree $1$ because its order type is $\omega + \omega$, and $\sum_{(m,n) \in \Nb} t^{-\frac{1}{(n+1)(m+1)}}$ has degree $2$, as its order type is $\omega^2$. In \cite{LM} it is proved by the 3rd and the 4th authors that the degree is an \textbf{multiplicative valuation} in the following sense:

\begin{fact}[{\cite[Thm.\ D]{LM}}]
  For all non-zero $b, c \in \KRR$,
  \begin{enumerate}
    \item $\deg(b + c) \leq \max\{\deg(b), \deg(c)\}$ (ultrametric inequality);
    \item $\deg(bc) = \deg(b) \oplus \deg(c)$ (multiplicativity);
    \item $\deg(b) = -\infty$ if and only if $b = 0$.
  \end{enumerate}
\end{fact}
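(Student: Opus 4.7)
Part (3) is immediate from the convention $\deg(0) = -\infty$.

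For the ultrametric inequality (1), I would use $\supp(b+c) \subseteq \supp(b) \cup \supp(c)$ together with the classical bound $\ot(A \cup B) \leq \ot(A) \oplus \ot(B)$ for well-ordered subsets of a linear order. Combined with the fact, visible from the polynomial-ring interpretation of $(\on, \oplus, \odot)$ recalled in Section 2, that the Cantor degree of a Hessenberg sum is the maximum of the individual Cantor degrees, this yields $\deg(b+c) \leq \max\{\deg(b), \deg(c)\}$.

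For multiplicativity (2), the upper bound proceeds in the same spirit: $\supp(bc) \subseteq \supp(b) + \supp(c)$ (Minkowski sum), and a Neumann-type bound gives $\ot(A+B) \leq \ot(A) \odot \ot(B)$ for well-ordered $A, B$ in an ordered abelian group. Since (again from the polynomial interpretation) the Cantor degree of a Hessenberg product is the Hessenberg sum of the Cantor degrees, we deduce $\deg(bc) \leq \deg(b) \oplus \deg(c)$.

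The substantial content is the lower bound $\deg(bc) \geq \deg(b) \oplus \deg(c)$; the plan is to localize at the top of the normal form. Write $b = \sum_{i=1}^n b_i t^{x_i}$ and $c = \sum_{j=1}^m c_j t^{y_j}$ in normal form (\prettyref{def:normal-form}). Since the supports of the $b_i t^{x_i}$ are consecutively ordered, $\ot(b) = \ot(b_1) + \dots + \ot(b_n)$ as an ordinal sum; together with $\ot(b_i) = \omega^{\deg(b_i)}$ and the inequalities $\deg(b_1) \leq \dots \leq \deg(b_n)$ this collapses to $\deg(b) = \deg(b_n)$, and similarly $\deg(c) = \deg(c_m)$. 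Unpacking the strict spacing $x_i + \supp(b_i) < x_{i+1} + \supp(b_{i+1})$ together with $\sup\supp(b_i) = 0$ shows that the $x_i$ are strictly increasing, likewise the $y_j$, so $x_n + y_m$ strictly exceeds every other $x_i + y_j$ and one can pick $M < x_n + y_m$ above every other position. In the interval $(M, x_n + y_m]$ the only summand of $bc$ contributing is $b_n c_m t^{x_n+y_m}$. By property~\prettyref{item:Palpha*Pbeta} and the fact that $K((\Rb))$ is an integral domain, $b_n c_m$ is a non-zero principal series of order type $\omega^{\deg(b) \oplus \deg(c)}$; since this ordinal is additively indecomposable, any non-empty final segment of its support has the same order type. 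Hence $\supp(bc) \cap (M, x_n+y_m]$ has order type $\omega^{\deg(b) \oplus \deg(c)}$, which gives $\ot(bc) \geq \omega^{\deg(b) \oplus \deg(c)}$ and therefore $\deg(bc) \geq \deg(b) \oplus \deg(c)$.

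The main obstacle is the strict monotonicity of the $x_i$, which is what prevents cancellation between the leading term $b_n c_m t^{x_n+y_m}$ and any other $b_i c_j t^{x_i + y_j}$ in a neighborhood of the top position; without it, one would have to invoke the convolution formula (Proposition~\ref{conv}) and Berarducci's $v_J$-valuation to control potential cancellations, which would be technically heavier.
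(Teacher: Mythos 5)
This statement is a \emph{Fact} quoted directly from \cite[Thm.~D]{LM}; the paper offers no proof of it, so there is no in-paper argument to compare against. Your reconstruction nonetheless appears correct. The core of the lower bound in (2)---isolating the leading summand $b_n c_m t^{x_n+y_m}$ of the normal form and using additive indecomposability of $\omega^{\deg(b)\oplus\deg(c)}$ to see that a nonempty final segment of $\supp(b_n c_m)$ retains full order type---is sound, and the strict increase of the $x_i$ (so that $x_n + y_m$ strictly dominates every other $x_i + y_j$) does indeed follow from the separation condition in \prettyref{def:normal-form} together with $\sup\supp(b_i)=0$ for each (principal) $b_i$. Two caveats. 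First, the argument is not elementary: the lower bound leans on Berarducci's multiplicativity \prettyref{item:Palpha*Pbeta} (i.e.\ \cite[Cor.~9.9]{Ber2000}), and the upper bound on the Carruth-type estimate $\ot(A+B)\le\ot(A)\odot\ot(B)$ for Minkowski sums of well-ordered subsets of an ordered abelian group; neither of these is cheap, so the proof is only a reduction to prior results. Second, for the upper bound you only need the weaker statement that the Cantor degree of $\ot(A+B)$ is at most the Hessenberg sum of the Cantor degrees of $\ot(A)$ and $\ot(B)$, which can also be extracted from the $v_J$-machinery already recalled in the paper.
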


This makes the degree is quite similar to $\deg_J$, but in a sense more precise: $\deg(b) = -\infty$ only when $b = 0$, whereas $\deg_J(b) = -\infty$ for all $b \in J$.

\begin{rem}
  \label{rem:vJ-deg-inequality}
  Note that by construction $v_J(b) \leq \ot(b)$, thus $\deg_J(b) \leq \deg(b)$.
\end{rem}

The above inequality is often strict, and not just for series $b \in J$, for instance
\[b = \sum_{(n,m) \in \Nb^2} t^{-1 - \frac{1}{(n+1)(m+1)}} + \sum_{n \in \Nb} t^{-\frac{1}{n+1}} \]
has degree $2$ while $\deg_J(b)=1$.

We also remark that all well ordered subsets of $\mathbb{R}$ are countable, thus $\ot$, $v_J$, $\deg_J$ and $\deg$ all take values below $\omega_1$, the first uncountable ordinal. One can easily verify that all values below $\omega_1$ are in the images of $\ot$, $\deg_J$, $\deg$ (but not $v_J$ which is always of the form $\omega^\alpha$).

\begin{defn}
  For $\nu \in \{\deg, \deg_J\} $ let $\rv_{\nu}(b)=\rv_{\nu}(c) $ if and only if $\nu(b-c)<\nu(b)$ or $b = c$, and let $\RV_\nu$ be the quotient of $K((\mathbb{R}^{\leq 0}))$ by this equivalence relation. For the sake of notation, we write $\rv_J$, $\RV_J$ for respectively $\rv_{\deg_J}, \RV_{\deg_J}$.
\end{defn}
For both $\nu$ as above, $\RV_\nu$ is the union over $\alpha < \omega_1$ of the quotients $\RV_\nu^\alpha \coloneqq \{b : \nu(b) \leq \alpha\} / \{c : \nu(b) < \alpha\}$. In particular, each $\RV_\nu^\alpha$ is naturally a $K$-vector space, $\RV_\nu^\alpha \cap \RV_\nu^\beta = \{\rv_\nu(0)\}$ for $\alpha \neq \beta$, and notably  $\RV_J^{\alpha} = J_{\alpha +1}/ J_{\alpha}$.

Moreover, for $b$, $c$ principal, we have $\deg(b - c) < \deg(b) = \deg_J(b)$ if and only if $\deg_J(b - c) < \deg_J(b)$. This means that $\rv_J$ and $\rv_{\deg}$ agree on the principal series. On the other hand, by definition of $\deg_J$, for every $b$ there is a principal $c$ such that $\rv_J(b) = \rv_J(c)$: in this case we simply write $\rv(b)$. Therefore, each $\RV_J^\alpha$ embeds into $\RV_{\deg}^\alpha$ as $K$-vector space and coincides with the image of the principal series under the quotient map. However, $\RV_{\deg}^\alpha$ is much richer, for instance $\RV_J^0 \cong K$ while $\RV_{\deg}^0 \cong K[\mathbb{R}^\leq]$, the space of the series with finite support. Thanks to $\RV_{\deg}$ and its properties proved in \cite{LM} we are able to find irreducible elements of degree $\alpha$ which are not principal, as we shall see in the following sections.

\section{Hereditary \texorpdfstring{$\rv_J$}{rv\textunderscore{}J-independence}}
\label{sec:hered-indep}

Given $b_1, \dots, b_n$ with $\deg_J(b_1) = \deg_J(b_i)$ for every $i = 1, \dots, n$, we give the following definition by induction on $\deg_j(b_1)$. The series $b_1, \dots, b_n$ are said to be \textbf{hereditarily $\rv_J$-independent}, written $\QQ(b_1, \dots, b_n)$, if:
\begin{enumerate}
  \item\label{axiom:indep}  $(\rv_J(b_i))_{1 \le i \le n}$ are $K$-linearly independent;
  \item\label{axiom:inductive-indep} when $\deg_J(b_1) \neq \deg_J^p(b_1)$, there exists some $\delta  < 0$ such that for all $\alpha$ with  $\deg_J^r(b_1) \leq \alpha < \deg_J(b_1)$, for all $\gamma_{1,1}, \ldots, \gamma_{n, 1}, \ldots , \gamma_{n, m(n)} \geq \delta$ with $\deg_J(b_i^{|\gamma_{i,j}})=\alpha$ and $\gamma_{i,j} \neq \gamma_{i,j'}$ whenever $j \neq j'$, we have \[ \QQ(b_1^{|\gamma_{1,1}},\ldots , b_1^{|\gamma_{1,m(1)}}, \ldots , b_n^{|\gamma_{n,1}}, \ldots , b_n^{|\gamma_{n,m(n)}}). \]
\end{enumerate}
Note that the above definition is obviously well founded.

By \cite[Prop.\ 5.5.1]{LM}, we know that every $b$ has a maximal divisor in $K[\mathbb{R}^{\le}]$ (the series with finite support), which is unique up to a multiplication by an element in $K$. Clearly, for every $b$ there is a unique maximal divisor $d$ such that the coefficient of $ t^{\sup(d)}$ is $1$, denoted by $p(b)$.

The following allows us to obtain irreducibility for non-principal elements using irreducibility results for principal elements.
\begin{lem} \label{indrv}
  Let $b \in \KRR$ such that $\sup(b)=0$. Suppose $b= \sum_{i=1}^m b_it^{\gamma_i}+r$ where $b_i \in P_{\deg(b)} $ for $1 \le i \le m$, $\gamma_1 < \ldots < \gamma_m $ and $\deg(r)< \deg(b) $. Suppose also that $\{ \rv(b_1) , \ldots , \rv(b_m) \}$ are linearly independent. Then $p(b)=1$.
\end{lem}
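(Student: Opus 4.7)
The plan is to pass to $\RV_{\deg}^{\alpha}$ (with $\alpha = \deg(b)$), exploit its structure as a free $K[\Rb^{\leq 0}]$-module over $V \coloneqq \RV_J^{\alpha}$, and reduce the problem to a polynomial divisibility computation.

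Assume for contradiction that $d \coloneqq p(b) \neq 1$. Writing $b = d c$ and using that $-\sup$ is a valuation on $\KRR$, the hypothesis $\sup(b) = 0$ forces $\sup(d) = \sup(c) = 0$; combined with the leading-coefficient-one normalisation, this gives $d = 1 + D$ for some nonzero polynomial $D$ with $\sup(D) < 0$. By multiplicativity of $\deg$, additivity of $\rv_{\deg}$, and the hypothesis $\deg(r) < \alpha$,
\[
  d \cdot \rv_{\deg}(c) = \rv_{\deg}(b) = \sum_{i=1}^{m} t^{\gamma_i} \rv(b_i) \quad \text{in } \RV_{\deg}^{\alpha},
\]
where $d = \rv_{\deg}(d) \in \RV_{\deg}^{0} \cong K[\Rb^{\leq 0}]$ acts by multiplication on $\RV_{\deg}^{\alpha}$.

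Every element of $\RV_{\deg}^{\alpha}$ admits a unique representation as a finite sum $\sum_{j} t^{\mu_j} e_j$ with distinct $\mu_j \in \Rb^{\leq 0}$ and $e_j \in V$: existence is immediate from the normal form of \prettyref{def:normal-form}, and uniqueness rests on the observation that shifted degree-$\alpha$ principal series at distinct positions have non-cancelling top parts, following from the properties of $\rv_{\deg}$ in \cite{LM}. Hence $\RV_{\deg}^{\alpha}$ is a free $K[\Rb^{\leq 0}]$-module on any $K$-basis of $V$. Extending $\{\rv(b_1), \ldots, \rv(b_m)\}$ to a $K$-basis of $V$, the $\rv(b_i)$-coordinate of the right-hand side is the pure monomial $t^{\gamma_i}$, so divisibility in the free module yields $d \mid t^{\gamma_i}$ in $K[\Rb^{\leq 0}]$.

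It remains to show that if $D \neq 0$ then $d$ cannot divide any monomial in $K[\Rb^{\leq 0}]$. Suppose $t^{\gamma_i} = dh$ for some nonzero polynomial $h$, and let $x_1 \coloneqq \min \supp(D)$ and $y_N \coloneqq \min \supp(h)$. In the product $dh$, the position $x_1 + y_N$ receives a nonzero contribution from the pair $(x_1, y_N)$, and no other pair $(x, y) \in \supp(d) \times \supp(h)$ with $x + y = x_1 + y_N$ is possible (any such $(x,y)$ would require $x < x_1$ or $y < y_N$). Since $x_1 + y_N < \max \supp(h) = \gamma_i$, this gives an uncancellable nonzero term strictly below $\gamma_i$, contradicting $dh = t^{\gamma_i}$. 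Hence $D = 0$ and $d = 1$. The main obstacle is the free-module structure of $\RV_{\deg}^{\alpha}$ over $K[\Rb^{\leq 0}]$; once granted, the rest is a short polynomial computation.
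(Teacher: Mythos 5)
Your proposal is correct and follows essentially the same route as the paper: both arguments rest on $\RV_{\deg}^{\alpha}$ being a free $K[\Rb^{\leq 0}]$-module over a $K$-basis of $\RV_J^{\alpha}$ (the paper cites \cite[Prop.~5.3.1]{LM}, which is exactly the structural fact you sketch), both extend $\{\rv(b_1),\dots,\rv(b_m)\}$ to such a basis, read off that $p(b)$ must divide the monomial coordinates $t^{\gamma_i}$, and conclude $p(b)=1$ from $\sup(b)=0$. The only cosmetic difference is that the paper invokes the coordinate/gcd algorithm of \cite[Rem.~5.4.7]{LM} to identify $p(\rv(b))=t^{\gamma_m}$, whereas you extract a single coordinate and finish with a short direct computation showing that a non-monomial polynomial with $\sup = 0$ cannot divide any monomial in $K[\Rb^{\leq 0}]$; this buys a marginally more self-contained finish, while the paper's citation buys brevity.
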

\begin{proof}
  By \cite[Prop.\ 5.3.1]{LM} we have that if $\{C_i\}_{i \in I}$ is a base for $\RV_J^{\alpha} $ then for every $B \in \RV_{\deg}^{\alpha}$ we have that $B = q_iC_i $ where $q_i \in K[\mathbb{R}^\leq]$ and $q_i \neq 0$ for finitely many $i \in I$. As $\{\rv(b_1), \ldots , \rv(b_m) \}$ is a part of a base of $\RV_J^{\alpha}$ then by the algorithm described in \cite[Rem.\ 5.4.7]{LM} we have that $t^{\gamma_m}=\operatorname{gcd}(t^{\gamma_1}, \ldots , t^{\gamma_m})$ is a maximal divisor of finite support of $\rv(b)$, hence $p(\rv(b))=t^{\gamma_m}$. As $\rv(p(b))=p(b)$ it follows that $p(b)$  divides $\rv(b)$, hence by maximality it divides also $p(\rv(b)) $. As $\sup(b)=0$, $p(b)$ must be a unit.
\end{proof}
We are now ready to prove irreducibility for non-principal series:
\begin{prop} \label{genir}
  Let $\alpha$ be such that for every $c \in P_{\alpha}$ with $\QQ(c)$ we have that $c$, $\rv(c)$ are irreducible. Let $b= \sum_{i=1}^m b_it^{\gamma_i}+r$ where $b_1, \ldots , b_m \in P_{\alpha} $, $\gamma_1 < \ldots < \gamma_m $ and $\ot(r)<\omega^{\alpha} $. If $\QQ(b_{1}, \ldots , b_{m}) $ holds then $b$ is irreducible.
\end{prop}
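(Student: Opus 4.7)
The plan is to argue by contradiction and use the multiplicativity of $\rv$ in $\RV_{\deg}^{\alpha}$ to reduce the irreducibility of the non-principal $b$ to that of its top principal coefficient $\rv(b_m)$, which is delivered by the standing hypothesis on $\alpha$.

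Suppose $b = uv$ with neither factor a unit. Clause~(1) of $\QQ(b_1,\dots,b_m)$ gives the $K$-linear independence of $\rv(b_1),\dots,\rv(b_m)$, so \prettyref{lem:indrv} (applied with the same tuple) yields $\p(b)=1$. If $\deg(u)=0$, then $u$ has finite support, i.e.\ $u\in K[\mathbb{R}^{\le 0}]$ is a divisor of $b$ of finite support, whence $u\mid \p(b)=1$ by maximality of $\p(b)$ and $u$ is a unit, contradicting our assumption. So $\alpha_1\coloneqq\deg(u)>0$ and $\alpha_2\coloneqq\deg(v)>0$, and $\alpha_1\oplus\alpha_2=\alpha$ by multiplicativity of $\deg$.

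Passing to $\RV_{\deg}^{\alpha}$, we have $\rv(b)=\rv(u)\rv(v)$. By the representation of $\RV_{\deg}^{\alpha_i}$ in terms of principal series used in the proof of \prettyref{lem:indrv}, we may write
\[ \rv(u)=\sum_{i=1}^{k}\rv(u_i')\,t^{\delta_i},\qquad \rv(v)=\sum_{j=1}^{\ell}\rv(v_j')\,t^{\eta_j}, \]
with $u_i'\in P_{\alpha_1}$, $v_j'\in P_{\alpha_2}$, $\delta_1<\dots<\delta_k$ and $\eta_1<\dots<\eta_\ell$. Using $\rv(u_i')\rv(v_j')=\rv(u_i'v_j')$ and $u_i'v_j'\in P_{\alpha}$, expanding the product term-by-term gives
\[ \sum_{s=1}^{m}\rv(b_s)\,t^{\gamma_s}=\rv(b)=\sum_{i,j}\rv(u_i'v_j')\,t^{\delta_i+\eta_j}. \]
The exponent $\delta_k+\eta_\ell$ is strictly larger than every other $\delta_i+\eta_j$ (since both sequences are strictly increasing), hence it must equal $\gamma_m$, and matching coefficients at this exponent yields
\[ \rv(b_m)=\rv(u_k')\rv(v_\ell')=\rv(u_k'v_\ell'). \]

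To close the argument, $\QQ(b_1,\dots,b_m)$ restricts to $\QQ(b_m)$ (take $m(i)=0$ for $i\ne m$ in clause~(2) of the definition), so the standing hypothesis on $\alpha$ tells us that $\rv(b_m)$ is irreducible. But the displayed identity writes $\rv(b_m)$ as a product of two elements of strictly positive Hessenberg degrees $\alpha_1,\alpha_2>0$, neither of which is a unit of $\RV_J$: contradiction. The subtlest step I expect is the verification that $\QQ$ descends to the singleton subfamily $\{b_m\}$ at the level of the inductive clause~(2), together with ensuring that the coefficient extraction in $\RV_{\deg}^{\alpha}$ is as clean as $\rv(u)\rv(v) = \sum_{i,j}\rv(u_i'v_j')\,t^{\delta_i+\eta_j}$ genuinely is; once both are granted, the appeal to irreducibility of $\rv(b_m)$ is immediate.
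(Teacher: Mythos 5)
Your argument takes a genuinely more direct route than the paper's. The paper reduces (via \cite[Lem.\ 7.1.1]{LM}) to proving that $B=\rv(b)/t^{\gamma_m}$ is irreducible in $\RV_{\deg}$, then supposes $AC=B$, lifts to $ac=b'$, invokes multiplicativity of $\rv_J$ together with the irreducibility of $\rv(b_m)$ to force one factor into $J+K$, and finally uses $\p(A)=1$ to push it into $K$. You instead suppose $b=uv$ outright and argue that the ``top'' principal residues must multiply, $\rv(b_m)=\rv(u_k')\rv(v_\ell')$, contradicting irreducibility of $\rv(b_m)$ in $\RV_J$. Your reduction from $\QQ(b_1,\dots,b_m)$ to $\QQ(b_m)$ via clause~(2) with $m(i)=0$ for $i\ne m$ is correct, and the use of $\p(b)=1$ to exclude $\deg(u)=0$ is fine.

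The step you flag as subtlest is indeed where the argument is under-justified. Representations $\rv(u)=\sum_i\rv(u_i')t^{\delta_i}$ in $\RV_{\deg}^{\alpha_1}$ are not unique as formal sums, so you cannot literally ``match coefficients'' between $\sum_s\rv(b_s)t^{\gamma_s}$ and $\sum_{i,j}\rv(u_i'v_j')t^{\delta_i+\eta_j}$, nor does ``largest exponent on both sides'' a priori give $\gamma_m=\delta_k+\eta_\ell$. What makes this work is the implicit hypothesis $\sup(b)=0$ (needed anyway for the statement, and used in \prettyref{lem:indrv}): since $\sup$ is additive on products, $\sup(u)=\sup(v)=0$, hence $\delta_k=\eta_\ell=0$ as these come from the normal forms of $u,v$, and $\gamma_m=0$ likewise. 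Then reducing mod $J$ (rather than informally comparing formal sums) gives $\rv_J(b)=\rv_J(b_m)$ and $\rv_J(uv)=\rv_J(u_k'v_\ell')$, because every other term of both displays lies in $J$; since $b=uv$, this yields $\rv(b_m)=\rv(u_k'v_\ell')$ cleanly. With that repair the proof is correct, and slightly more elementary than the paper's since it bypasses \cite[Lem.\ 7.1.1]{LM}, though at the cost of not establishing the $\RV_{\deg}$-level irreducibility of $\rv(b)/t^{\gamma_m}$ that the paper's argument also delivers.
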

\begin{proof}
  By Lemma \ref{indrv} we know that $p(b)=1$, as $\{\rv(b_1), \ldots , \rv(b_m)\} $ are linearly independent. By \cite[Lem.\ 7.1.1]{LM} it is enough to prove that $\frac{\rv(b)}{p(\rv(b))}$ is irreducible. Let $B=\frac{\rv(b)}{t^{\gamma_m}}$ and suppose that $AC=B$. Then there exist some $a$, $c$, $b'$ such that $\rv(a)=A$, $\rv(b')=B$, $\rv(c)=C$ and $ac=b'$. Then $ac = d \mod J$ for some $d \in P_{\alpha} $ such that $\rv(d)=\rv(b_{m})$ (equivalently, $\rv_J(ac) = \rv_J(b_m)$). As $\rv(d)$ is irreducible, this implies that, without loss of generality, $a \in J+K$. Let $a = \sum_{i=1}^s a_it^{\delta_i}+a_0 $ be the normal form of $a$, then $a_0 \in K^{\times}$. As $p(B)=1$ then also $p(A)=1$. Hence, we must have also $\deg_J(a_i) = 0$ for every $1 \le i \le s$. Hence $a \in K[\mathbb{R}^{\le}]$, and therefore $ a \in K$. Hence, $B$ is irreducible.
\end{proof}
\begin{rem}
  In the proof above we used only \prettyref{axiom:indep} -- linear independence in $\RV_J$. In the next section where we use induction arguments we will make heavy use of \prettyref{axiom:inductive-indep}.
\end{rem}
We say that $b_1 , \ldots , b_m \in \KRR$ are \emph{mutually random} if one of the following holds:
\begin{itemize}
  \item  $\cl(\supp(b_i))\cap \cl(\supp(b_j)) = \{0\} $ for every $1 \le i \neq j \le m $ and \\ $\bigcup_i \cl(\supp(b_i)) - \{0\} $ is $\mathbb{Q} $-linearly independent set.
  \item $ \langle {b_i}_{\gamma} : 1 \le i \le m, \gamma \in \supp(b_i) \rangle  $ is algebraically independent over $\mathbb{Q} $.
\end{itemize}
We show now that $b_1, \ldots , b_m $ mutually random implies $\QQ(b_1, \ldots , b_m)$.
\begin{prop} \label{algind}
  Let $b_1, \ldots , b_n$ be such that $ \langle {b_i}_{\gamma} : 1 \le i \le n, \gamma \in \supp(b_i) \rangle  $ is algebraically independent over $\mathbb{Q} $. Suppose that $\deg_J(b_i)=\alpha \ge 1$ for every $1 \le i \le n$. Then $\rv_J(b_1), \ldots , \rv_J(b_n)$ are linearly independent.
\end{prop}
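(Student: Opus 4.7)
I would argue by contradiction. Suppose $\sum_{i=1}^n c_i b_i = r$ with some $c_i \in K$ nonzero and $v_J(r) < \omega^\alpha$. By the definition of $v_J$, one can write $r = c + j + k$ where $\ot(c) < \omega^\alpha$, $j \in J$, and $k \in K$. Pick $\epsilon > 0$ small enough that $\supp(j) \subseteq (-\infty, -\epsilon]$. Then for every $\gamma \in (-\epsilon, 0)$ the coefficient-wise equation reads $\sum_i c_i b_{i,\gamma} = c_\gamma$ (with the convention $b_{i,\gamma}=0$ if $\gamma \notin \supp(b_i)$), which vanishes whenever $\gamma \notin \supp(c)$.

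The crucial estimate is $\ot(\supp(b_i) \cap (-\epsilon, 0)) \geq \omega^\alpha$ for each $i$. Indeed, the ``low'' part of $b_i$ (support in $(-\infty, -\epsilon]$) lies in $J$ and the constant term lies in $K$, so $b_i$ is congruent modulo $J + K$ to its restriction to $(-\epsilon, 0)$; hence $\omega^\alpha = v_J(b_i) \leq \ot(\supp(b_i) \cap (-\epsilon, 0))$. Since $\omega^\alpha$ is a limit ordinal (as $\alpha \geq 1$) and $\ot(\supp(c)) < \omega^\alpha$, one can inductively choose distinct points $\gamma_1, \ldots, \gamma_n \in (-\epsilon, 0) \setminus \supp(c)$ with $\gamma_k \in \supp(b_k)$: at step $k$, the ``forbidden'' set $\supp(c) \cup \{\gamma_1, \ldots, \gamma_{k-1}\}$ still has order type strictly less than $\omega^\alpha$, hence cannot contain $\supp(b_k) \cap (-\epsilon, 0)$, as the latter has order type $\geq \omega^\alpha$.

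At each such $\gamma_k$ we obtain $\sum_i c_i b_{i,\gamma_k} = 0$, i.e.\ the matrix equation $M\mathbf{c} = 0$ with $M_{ki} = b_{i,\gamma_k}$. Expanding by the Leibniz formula, $\det M = \sum_\sigma \mathrm{sgn}(\sigma) \prod_k b_{\sigma(k),\gamma_k}$; the identity term $\prod_k b_{k,\gamma_k}$ is a nonzero monomial because each $\gamma_k \in \supp(b_k)$, so each $b_{k,\gamma_k}$ is one of the algebraically independent transcendentals. Since the $\gamma_k$ are distinct, different permutations contribute distinct monomials in the variables $\{b_{i,\gamma} : \gamma \in \supp(b_i)\}$, so the identity monomial cannot be cancelled: $\det M \neq 0$ by algebraic independence over $\mathbb{Q}$. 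Therefore $\mathbf{c} = 0$, contradicting our assumption.

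The main obstacle is the order-type estimate $\ot(\supp(b_i) \cap (-\epsilon, 0)) \geq \omega^\alpha$, which ultimately rests on the valuative inequality $v_J \leq \ot$ applied to a localized representative of $b_i$, together with the combinatorial extraction of $n$ evaluation points $\gamma_k \in \supp(b_k)$ jointly avoiding the countable exceptional set $\supp(c)$. Both steps rely on $\omega^\alpha$ being a limit ordinal, which is why the hypothesis $\alpha \geq 1$ is needed.
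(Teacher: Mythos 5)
Your proof is correct and follows the same determinant strategy as the paper's, but it handles one subtlety more carefully. The paper kills the coefficients of the $b_i$ over $\supp(r)$ to obtain series $u_i$ with $\sum k_i u_i = 0$, asserts they ``all have the same support'', picks $\gamma_1,\dots,\gamma_n \in \supp(u_1)$ and forms the matrix $A[i,j]=u_{j\gamma_i}$. As written this is imprecise: if the $b_i$ do not share a support, some $\supp(u_j)$ will miss some $\gamma_i$, so some entries of $A$ are $0$; then $\det A = 0$ does not by itself contradict algebraic independence, and the subtuple of nonzero entries need not exhibit a polynomial relation. Your version repairs this. You isolate a terminal window $(-\epsilon,0)$ on which the coefficient equation reads $\sum_i c_i b_{i,\gamma}=c_\gamma$, use the valuative inequality $v_J\le\ot$ (together with $b_i$ being congruent mod $J+K$ to its restriction to that window) to get $\ot(\supp(b_i)\cap(-\epsilon,0))\ge\omega^\alpha$, and then exploit additive principality of $\omega^\alpha$ to choose the evaluation points \emph{diagonally}, one $\gamma_k$ from each $\supp(b_k)$, avoiding the small exceptional set $\supp(c)$. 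With this choice the Leibniz expansion of $\det M$ has a guaranteed nonzero identity monomial $\prod_k b_{k,\gamma_k}$, and because the $\gamma_k$ are distinct, no other permutation can produce the same monomial; hence $\det M\ne 0$ and $\mathbf c=0$. So your proof is not a different approach — same matrix, same use of algebraic independence via a determinant — but it gives a complete argument in the general case, whereas the paper's write-up tacitly assumes a common support.
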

\begin{proof}
  Suppose by contradiction that we have $k_1, \ldots k_n \in K$ not all zero such that $\sum_{i=1}^n k_ib_i = r \in J_{{\alpha}}$. If we replace each coefficient $b_{i\gamma}$ with $0$ for $\gamma \in \supp(r)$, we find $u_1, \ldots, u_n$ all with the \emph{same} support $\bigcup_{i=1}^n \supp(b_i) \setminus \supp(r)$ of order type ${\omega^{\alpha}}$ and such that $\sum_{i=1}^n k_iu_i = 0$. Moreover, the coefficients of $u_1, \dots, u_n$ satisfy the same algebraic independence assumption as for $b_1, \dots, b_n$. Let $\gamma_1, \ldots \gamma_n \in \supp(u_1)$. We define $A \in M_{n\times n }(K)$ such that for every $1\le i,j \le n$ we have $A[i,j]=u_{j\gamma_i}$, and let $\overline{v} = [ k_1, \ldots , k_n ] \neq \overline{0}$. Then we have $A\overline{v}=\overline{0}$ and hence $\det(A)=0$. Hence, the elements of $A$ are not algebraically independent, a contradiction.
\end{proof}
\begin{cor}
  If $ \langle {b_i}_{\gamma} : 1 \le i \le m, \gamma \in \supp(b_i) \rangle  $ is algebraically independent over $\mathbb{Q} $, and $\deg_J(b_i)=\deg_J(b_1)$ for every $2 \le i \le n $, then $\QQ(b_1 , \ldots , b_n)$ holds.
\end{cor}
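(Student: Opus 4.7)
The plan is to induct on $\deg_J(b_1)$, which by hypothesis equals $\deg_J(b_i)$ for all $i$. Condition~\prettyref{axiom:indep} of $\QQ$---the $K$-linear independence of $\rv_J(b_1),\ldots,\rv_J(b_n)$---follows in all cases directly from Proposition~\ref{algind}, so the only thing to verify is condition~\prettyref{axiom:inductive-indep}. The base case is when $\deg_J(b_1) = \deg_J^p(b_1)$, i.e., $\deg_J(b_1)$ is additively principal; there condition~\prettyref{axiom:inductive-indep} is vacuous and nothing further needs to be checked.

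For the inductive step, assume $\deg_J(b_1) \neq \deg_J^p(b_1)$. Pick any $\delta < 0$; the claim is that any such $\delta$ witnesses~\prettyref{axiom:inductive-indep}. Fix an ordinal $\alpha$ with $\deg_J^r(b_1) \le \alpha < \deg_J(b_1)$ together with truncation points $\gamma_{i,j} \ge \delta$ satisfying $\deg_J(b_i^{|\gamma_{i,j}}) = \alpha$ and $\gamma_{i,j} \neq \gamma_{i,j'}$ whenever $j \neq j'$. The crucial observation is that $b_i^{|\gamma} = t^{-\gamma}\sum_{x \le \gamma}(b_i)_x t^x$ has as nonzero coefficients exactly the coefficients $(b_i)_x$ with $x \le \gamma$, merely reindexed by the shift $t^{-\gamma}$. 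Hence the collective coefficient set $\{(b_i^{|\gamma_{i,j}})_x\}_{i,j,x}$ is a subset of $\{(b_i)_\gamma : 1 \le i \le n,\ \gamma \in \supp(b_i)\}$ and is therefore algebraically independent over $\mathbb{Q}$. Since all the truncations $b_i^{|\gamma_{i,j}}$ share the common value $\deg_J = \alpha < \deg_J(b_1)$, the induction hypothesis applies to the enlarged tuple and yields $\QQ(b_1^{|\gamma_{1,1}},\ldots,b_n^{|\gamma_{n,m(n)}})$, completing the step.

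I expect no real obstacle. The entire argument reduces to the observation that truncation is a selection operation on coefficients, hence preserves $\mathbb{Q}$-algebraic independence, together with a routine induction on $\deg_J$. The reason that an arbitrary $\delta$ suffices is precisely that algebraic independence is a global property invariant under passage to sub-families of coefficients, so no fine-tuning of $\delta$ is ever needed.
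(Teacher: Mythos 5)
The induction scheme, the appeal to Proposition~\ref{algind} for condition~\prettyref{axiom:indep}, and the base case $\deg_J(b_1) = \deg_J^p(b_1)$ are all fine. The gap is in the inductive step, at the sentence asserting that ``the collective coefficient set $\{(b_i^{|\gamma_{i,j}})_x\}_{i,j,x}$ is a subset of $\{(b_i)_\gamma\}$ and is therefore algebraically independent.'' What you need in order to invoke the induction hypothesis is that the \emph{family} of coefficients of the truncations, indexed by the triple $(i,j,\delta)$, is algebraically independent --- i.e.\ that they are pairwise distinct and satisfy no nontrivial $\Qb$-polynomial relation. But this family has forced repetitions as soon as some $m(i) \ge 2$: for $j \neq j'$, every $x \in \supp(b_i)$ with $x \le \min(\gamma_{i,j},\gamma_{i,j'})$ (for instance $x = \crit_J(b_i)$) contributes the same coefficient $(b_i)_x$ at position $x - \gamma_{i,j}$ of $b_i^{|\gamma_{i,j}}$ and at position $x - \gamma_{i,j'}$ of $b_i^{|\gamma_{i,j'}}$. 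A family with a repeated entry is never algebraically independent (the difference of the two copies is a nontrivial vanishing $\Qb$-polynomial), so the hypothesis of the induction hypothesis is simply not met. The set of \emph{values} is indeed a subset of an algebraically independent set, but that is a strictly weaker fact than the indexed-family independence required.

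This is not merely pedantic: the mechanism inside Proposition~\ref{algind} is the determinant argument, which forms the matrix $A[i,j] = u_{j\gamma_i}$ and concludes from $\det(A)=0$ that the entries cannot be algebraically independent. When the entries repeat --- which they must for truncations of the same $b_i$ at different $\gamma_{i,j}$, since $A[i,j] = (b)_{\delta_i + \gamma_j}$ and the sums $\delta_i + \gamma_j$ can coincide --- the determinant polynomial is no longer the generic one, and nonvanishing must be re-argued. The statement is plausibly still true (one can choose the row indices $\delta_1,\dots,\delta_m$ so that the sums $\delta_i + \gamma_j$ are pairwise distinct, which avoids the repetitions, and the usual determinant argument then goes through), but that extra step is exactly the content that is missing. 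As it stands, your proof assumes away precisely the difficulty that makes condition~\prettyref{axiom:inductive-indep} nontrivial; the paper itself offers no proof for this corollary, so there is no text to contradict you, but a careful writeup has to confront the repetition issue explicitly rather than pass the algebraic-independence hypothesis along unchanged.
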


\begin{prop}\label{prop:randomness-implies-indep}
  If $\deg_J(b_1)=\deg_J(b_i) > 0$ for $2 \le i \le n$,  $\cl(\supp(b_i))\cap \cl(\supp(b_j)) = \{0\} $ for every $1 \le i \neq j \le m $ and $\bigcup_i \cl(\supp(b_i)) - \{0\} $ is $\mathbb{Q} $-linearly independent set, then $\QQ(b_1, \ldotp , b_n)$ holds.
\end{prop}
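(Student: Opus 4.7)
The plan is to prove the proposition by induction on $\alpha = \deg_J(b_1)$, verifying \prettyref{axiom:indep} and \prettyref{axiom:inductive-indep} of the definition of hereditary $\rv_J$-independence at each step. The common thread is that the hypothesis of $\mathbb{Q}$-linear independence of $\bigcup_i \cl(\supp(b_i)) \setminus \{0\}$ is preserved under translated truncation and forces the supports of the $b_i$'s to be essentially disjoint away from $0$, which in turn controls the ordinal value $v_J$.

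For \prettyref{axiom:indep}, I would argue by contradiction. Suppose $\sum_i k_i b_i \in J_\alpha$ with some $k_j \neq 0$, and pick $c$ with $c \equiv \sum_i k_i b_i \mod J + K$ and $\ot(c) = v_J(\sum_i k_i b_i) < \omega^\alpha$. Writing $\sum_i k_i b_i = c + j' + k'$ with $j' \in J$ and $k' \in K$, and choosing $\gamma$ with $\sup\supp(j') < \gamma < 0$, the hypothesis $\cl(\supp(b_i)) \cap \cl(\supp(b_j)) = \{0\}$ for $i \neq j$ ensures that for every $x \in \supp(b_j) \cap (\gamma, 0)$ the coefficient of $t^x$ in $\sum_i k_i b_i$ is exactly $k_j b_{j,x} \neq 0$; the same coefficient must then appear in $c$, so $x \in \supp(c)$. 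Hence $\supp(b_j) \cap (\gamma, 0) \subseteq \supp(c)$, and combining with the general inequality $v_J(b_j) \leq \ot(\supp(b_j) \cap (\gamma, 0))$ (obtained by reducing $b_j$ modulo $J + K$ using $b_{j\vert\gamma} \in J$ and $b_{j,0} t^0 \in K$) gives the contradiction $\omega^\alpha = v_J(b_j) \leq \ot(c) < \omega^\alpha$.

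For \prettyref{axiom:inductive-indep}, which is needed only when $\alpha > \deg_J^p(b_1)$ (so that $\deg_J^r(b_1) > 0$), I would fix any $\delta < 0$ and consider any $\alpha'$ with $\deg_J^r(b_1) \leq \alpha' < \alpha$ and any $\gamma_{i,j} \geq \delta$ as in the definition. Since $\alpha' > 0$, each translated truncation $b_i^{|\gamma_{i,j}}$ lies outside $J$, which forces $\gamma_{i,j} \in \cl(\supp(b_i))$. By the inductive hypothesis it then suffices to verify that the family $\{b_i^{|\gamma_{i,j}}\}_{i,j}$ still satisfies the proposition's hypothesis at level $\alpha'$: every non-zero element of $\cl(\supp(b_i^{|\gamma_{i,j}}))$ has the form $x - \gamma_{i,j}$ with $x \in \cl(\supp(b_i))$ and $x \neq \gamma_{i,j}$, so any $\mathbb{Q}$-linear relation among such shifted elements translates into a $\mathbb{Q}$-linear relation among elements of $\bigcup_i \cl(\supp(b_i))$. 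The analogous translation shows that the closures of translated supports for distinct $(i,j)$ meet only at $\{0\}$.

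The hard part will be the case analysis in verifying \prettyref{axiom:inductive-indep}: coincidences such as $x_k = \gamma_{i_{k'},j_{k'}}$, $x_k = 0$, or $\gamma_{i,j} = 0$ allow a translated relation to hold for seemingly non-trivial reasons. To force every rational coefficient to vanish, one must carefully use both the disjointness $\cl(\supp(b_i)) \cap \cl(\supp(b_{i'})) = \{0\}$ for $i \neq i'$ and, within each single $\cl(\supp(b_i))$, the $\mathbb{Q}$-linear independence of the involved $\gamma_{i,j}$'s and $x_k$'s to rule out every such coincidence. Once the hypothesis is transferred to the translated truncations, the induction closes through the valuation argument of \prettyref{axiom:indep}.
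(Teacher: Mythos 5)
Your plan for \prettyref{axiom:inductive-indep} has a genuine gap: the hypothesis of the proposition is \emph{not} preserved under translated truncation, so you cannot simply transfer it to the family $\{b_i^{|\gamma_{i,j}}\}_{i,j}$ and close by the inductive hypothesis. To see this concretely, take a single series $b_1$ with four distinct points $g_1 > g_2 > g_3 > g_4$ in $\cl(\supp(b_1)) - \{0\}$, where $g_1$ and $g_2$ are big points. Then $g_3 - g_1,\ g_4 - g_1 \in \cl(\supp(b_1^{|g_1}))$ and $g_3 - g_2,\ g_4 - g_2 \in \cl(\supp(b_1^{|g_2}))$, and
\[
  (g_3 - g_1) - (g_4 - g_1) - (g_3 - g_2) + (g_4 - g_2) = 0
\]
is a non-trivial $\mathbb{Q}$-linear relation among four distinct elements of $\bigl(\cl(\supp(b_1^{|g_1})) \cup \cl(\supp(b_1^{|g_2}))\bigr) - \{0\}$, even though $\{g_1, g_2, g_3, g_4\}$ is $\mathbb{Q}$-linearly independent. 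No amount of case analysis on coincidences will salvage this, because there are none: all four shifted exponents are distinct and non-zero. So the ``hard part'' you flag cannot be resolved in the way you propose.

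The paper's proof sidesteps this entirely. It never argues that the truncations satisfy the same hypothesis; instead it runs a direct cancellation argument that only ever uses the $\mathbb{Q}$-linear independence of the \emph{original} closures $\bigcup_i \cl(\supp(b_i)) - \{0\}$. Given $\sum_{i,j} k_{i,j} b_i^{|\gamma_{i,j}}$ with too small an ordinal value, pick a cancelled exponent $\delta < 0$ in one $\supp(b_i^{|\gamma_{i,j}})$ but not in the support of the sum; for it to cancel there must be $(i',j') \neq (i,j)$ and $\beta \in \supp(b_i)$, $\beta' \in \supp(b_{i'})$ with $\beta - \gamma_{i,j} = \delta = \beta' - \gamma_{i',j'}$. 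One checks $\gamma_{i,j} \neq \gamma_{i',j'}$ (from the constraints of \prettyref{axiom:inductive-indep} when $i = i'$, and from disjointness of closures when $i \neq i'$), and then $\beta - \gamma_{i,j} - \beta' + \gamma_{i',j'} = 0$ is a non-trivial relation among elements of the original closures — a contradiction. Crucially, because iterated translated truncations are again single translated truncations (by exponents still lying in the original closures), this one argument verifies the $K$-linear independence of $\rv_J$'s at \emph{every} depth of the recursive definition of $\QQ$, which is exactly what $\QQ(b_1,\dots,b_n)$ unfolds to. Your argument for \prettyref{axiom:indep} is fine and consistent with the paper's; the issue is solely the transfer strategy for \prettyref{axiom:inductive-indep}.
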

\begin{proof}
  For simplicity assume $b_1, \ldots , b_n \in P_{\deg_J(b_1)}$. Since the closures of the supports have pairwise intersection $\{0\}$, the supports are pairwise disjoint, so $\rv_J(b_1),\dots,\rv_J(b_n)$ must be $K$-linearly independent, proving \prettyref{axiom:indep}.

  Suppose $\deg_J(b_1)\neq \deg_J^p(b_1)$. Let \[S=\{\gamma_{1,1} , \ldots , \gamma_{1,{m(1)}}, \ldots , \gamma_{n,1} , \ldots , \gamma_{n,{m(n)}} \} \] as in \prettyref{axiom:inductive-indep}, and suppose by contradiction that there are
  \[k_{1,1}, \ldots , k_{1,{m(1)}}, \ldots , k_{n,1} , \ldots , k_{n,{m(n)}} \in K\]
  not all zero such that $b=\sum_{i=1}^n \sum_{j=1}^{m(i)} k_{i,j}b_i^{|\gamma_{i,j}} \in J_{\deg_J(b_1)} $.

  By comparing the ordinal values, there are $i$, $j$ and $\delta < 0$ such that $\delta$ is in $\supp(b_i^{|\gamma_{i,j}})$, but not in $\supp(b)$. For $\delta$ to cancel, there must be $(i',j') \neq (i,j)$ such that $\delta \in \supp(b_{i'}^{|\gamma_{i',j'}})$. But then $\beta - \gamma_{i,j} = \delta = \beta' - \gamma_{i',j'}$ for some $\beta \in \supp(b_i)$ and $\beta' \in \supp(b_{i'})$. If $i = i'$, then $\gamma_{i,j} \neq \gamma_{i,j'}$; if $i \neq i'$, since the closure of the supports are disjoint, then $\BigP(b_i) \cap \BigP(b_{i'}) = \varnothing$, hence again $\gamma_{i,j} \neq \gamma_{i',j'}$. Therefore, $\{\beta,\beta',\gamma_{i,j},\gamma_{i',j'}\}$ is not linearly independent, a contradiction.
\end{proof}
Note that in the above proof, we may further require $\delta$ to be isolated within the set $\bigcup_{i=1}^n\cl(\supp(b_i^{|\gamma_{i,j}}))$, since the isolated points themselves have order type $v_J(b_1^{|\gamma_{1,j}})$, thus most of them must cancel out. In particular, we can find $\beta$, $\beta'$ isolated. Moreover, recall that each $\gamma_{i,j}$ is in $\BigP(b_i)$, thus when $\deg_J(b_1) \neq \deg_J^p(b_1)$, that is $\deg_J^r(b_1) > 0$, it is a limit point.

As the contradiction above arises from the equality $\gamma_{i,j} - \gamma_{i',j'} = \beta - \beta'$, we may weaken the condition and allow dependence between either the isolated points or the big points. In particular, to verify \prettyref{axiom:inductive-indep} it suffices to test the independence on the points of $\supp(b_i)$ only, rather than all of $\cl(\supp(b_i))$. Likewise, for \prettyref{axiom:indep}, it is sufficient to check that the supports of the $b_i$'s pairwise intersect only on accumulation points, or alternatively that the $b_i$'s do not have big points in common, again pairwise. Let $\operatorname{Lim}(b)$ be the set of all accumulation points in $\cl(\supp(b))$.

\begin{cor}
  Suppose that $\deg_J(b_1) = \deg_J(b_i) > 0$ for $2 \le i \le n$ and:
  \begin{itemize}
    \item $(\supp(b_i) \setminus \operatorname{Lim}(b_i)) \cap (\supp(b_j) \setminus \operatorname{Lim}(b_j)) = \varnothing$ for $1 \le i\neq j \le n $;
    \item $\bigcup_{i=1}^n \supp(b_i) \setminus \operatorname{Lim}(b_i)$ is $\mathbb{Q}$-linearly independent over $\operatorname{Span}_{\mathbb{Q}}(\bigcup_{i=1}^n \BigP(b_i))$.
  \end{itemize}
  Then $\QQ(b_1, \ldotp , b_n)$ holds.
\end{cor}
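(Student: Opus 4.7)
The plan is to adapt the proof of \prettyref{prop:randomness-implies-indep}, replacing its two hypotheses (disjointness of closures of supports, and $\mathbb{Q}$-linear independence of $\bigcup \cl(\supp(b_i)) - \{0\}$) by the weaker conditions of the corollary. The argument should proceed by induction on $\deg_J(b_1)$, simultaneously establishing \prettyref{axiom:indep} and \prettyref{axiom:inductive-indep} at each level, and is essentially a formalization of the remark preceding the corollary.

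For \prettyref{axiom:indep}, I would suppose for contradiction that $\sum_i k_i b_i = b \in J_{\deg_J(b_1)}$ with $k_i \in K$ not all zero and (WLOG) $k_1 \neq 0$. The set $\supp(b_1) \setminus \operatorname{Lim}(b_1)$ witnesses the full ordinal value $v_J(b_1) = \omega^{\deg_J(b_1)}$ cofinally near $0$, while each $\operatorname{Lim}(b_j)$ has Cantor-Bendixson rank strictly less than $\deg_J(b_j) = \deg_J(b_1)$ and hence order type strictly less than $\omega^{\deg_J(b_1)}$ near $0$. Since $\omega^{\deg_J(b_1)}$ is additively principal, the set $T$ of $\beta \in \supp(b_1) \setminus \operatorname{Lim}(b_1)$ avoiding $\bigcup_{j \neq 1} \operatorname{Lim}(b_j)$ still has ordinal value $\omega^{\deg_J(b_1)}$. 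For each such $\beta$, condition 1 forbids $\beta$ from being an isolated point of any other $b_j$, and by construction $\beta \notin \operatorname{Lim}(b_j)$ either, so $\beta \notin \supp(b_j)$ for $j \neq 1$; hence the coefficient of $t^\beta$ in $b$ equals $k_1 (b_1)_\beta \neq 0$. This forces $T \subseteq \supp(b)$ and thus $\deg_J(b) \geq \deg_J(b_1)$, contradicting the assumption.

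For \prettyref{axiom:inductive-indep}, assuming $\deg_J(b_1) > \deg_J^p(b_1)$, I would choose $\delta < 0$ close enough to $0$ and check that, for any choice of $\gamma_{i,j} \in \BigP(b_i) \cap [\delta, 0)$ with $\deg_J(b_i^{|\gamma_{i,j}}) = \alpha$ (for fixed $\deg_J^r(b_1) \le \alpha < \deg_J(b_1)$) and $\gamma_{i,j} \neq \gamma_{i,j'}$ for $j \neq j'$, the truncations $b_i^{|\gamma_{i,j}}$ inherit conditions 1 and 2 of the corollary. The inductive hypothesis then yields $\QQ(b_1^{|\gamma_{1,1}}, \dots, b_n^{|\gamma_{n,m(n)}})$. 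Inheritance of condition 1 follows from condition 2: an overlap $\beta - \gamma_{i,j} = \beta' - \gamma_{i',j'}$ of isolated parts yields $\beta - \beta' = \gamma_{i,j} - \gamma_{i',j'}$, a $\mathbb{Q}$-linear relation of isolated points over big points; condition 2 forces $\beta = \beta'$ and $\gamma_{i,j} = \gamma_{i',j'}$, and the remaining cases contradict either condition 1 (if $i \neq i'$) or the distinctness of the $\gamma_{i,j}$'s (if $i = i'$). Inheritance of condition 2 follows similarly by pulling back any purported rational dependence among shifted isolated points over shifted big points to one on the originals.

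The main obstacle is the order-type bookkeeping at the core of \prettyref{axiom:indep}: one must carefully justify from Berarducci's analysis of $v_J$ that $\operatorname{Lim}(b_j)$ has Cantor-Bendixson rank strictly less than $\deg_J(b_j)$, and that removing finitely many such lower-degree pieces from $\supp(b_1) \setminus \operatorname{Lim}(b_1)$ leaves behind a set still essentially of full ordinal value $\omega^{\deg_J(b_1)}$ near $0$. Once this bookkeeping is in place, the remainder of the argument is a direct translation of the proof of \prettyref{prop:randomness-implies-indep}.
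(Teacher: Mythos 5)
Your overall strategy—induct on $\deg_J(b_1)$ and adapt the proof of Proposition~\ref{prop:randomness-implies-indep} along the lines of the remark preceding the corollary—is the intended one (the paper gives no self‑contained proof and defers to that remark). The inductive‑step bookkeeping is also essentially right in spirit. However, the argument you give for Axiom~\prettyref{axiom:indep} relies on a claim that is false in general, and you correctly flagged it as the main obstacle.

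Specifically, you assert that $\operatorname{Lim}(b_j)$ has Cantor–Bendixson rank strictly less than $\deg_J(b_j)$, hence order type $<\omega^{\deg_J(b_1)}$ near $0$. This is only true when $\deg_J(b_j)$ is a \emph{successor} ordinal. If $\deg_J(b_j)=\alpha$ is a limit (e.g.\ $\alpha=\omega$, or $\alpha=\omega+\omega$ so that $\deg_J^r>0$ and the hypotheses of the corollary are non‑vacuous), then the derived set of a well‑ordered set of order type $\omega^\alpha$ can itself have order type $\omega^\alpha$: the limit ordinals below $\omega^\alpha$ form a set of order type $\omega^\alpha$ when $\alpha$ is a limit. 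Consequently, removing $\bigcup_{j\neq 1}\operatorname{Lim}(b_j)$ from $\supp(b_1)\setminus\operatorname{Lim}(b_1)$ can in principle eat the entire set, and the set $T$ you build need not have order type $\omega^{\deg_J(b_1)}$. Condition~1 of the corollary constrains overlaps between \emph{isolated} parts only, and Condition~2 controls dependences only over $\BigP(b_j)$, which is in general a proper subset of $\operatorname{Lim}(b_j)$; neither rules out $\supp(b_1)\setminus\operatorname{Lim}(b_1)$ being swallowed by $\bigcup_{j}\operatorname{Lim}(b_j)$. So the gap you flagged is real and not merely a bookkeeping nuisance.

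There is a second, unflagged gap in the treatment of Axiom~\prettyref{axiom:inductive-indep}. You want to deduce $\QQ$ of the truncations from the inductive hypothesis by showing Conditions~1 and~2 are inherited. The inheritance of Condition~2 is not a straightforward pullback: the big points of $b_i^{|\gamma_{i,j}}$ are defined relative to the threshold $\deg_J^r(b_i^{|\gamma_{i,j}})$, which is in general strictly smaller than $\deg_J^r(b_i)$, so $\BigP(b_i^{|\gamma_{i,j}})$ corresponds (after shifting by $\gamma_{i,j}$) to points of $\BigP^{\beta}(b_i)$ for some $\beta<\deg_J^r(b_i)$—a strictly larger set than $\BigP(b_i)$. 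Condition~2 on the originals gives independence only over $\operatorname{Span}_{\Qb}\bigl(\bigcup_i\BigP(b_i)\bigr)$, not over this larger span. The remark in the paper sidesteps this by locating the contradiction directly in terms of isolated points of the original $\supp(b_i)$ and the $\gamma_{i,j}\in\BigP(b_i)$, rather than re‑verifying the hypotheses for the truncations; if you want to keep the ``inheritance'' phrasing you would need to prove a stronger version of Condition~2 closed under lowering the $\BigP$-threshold, or argue directly as the remark does.
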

\begin{cor}
  If $\deg_J(b_1)=\deg_J(b_i) > 0$ for $2 \le i \le n$ and:
  \begin{itemize}
    \item $\BigP(b_i) \cap \BigP(b_j) = \{0\} $ for every $1\le i\neq j \le n$;
    \item $\bigcup_{i=1}^n \BigP(b_i)$ is $\mathbb{Q}$-linearly independent over $\operatorname{Span}_{\mathbb{Q}}(\bigcup_{i=1}^n \supp(b_i)\setminus\operatorname{Lim}(b_i))$.
  \end{itemize}
  Then $\QQ(b_1, \ldotp , b_n)$ holds.
\end{cor}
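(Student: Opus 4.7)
The plan is to mirror the proof of Proposition~\ref{prop:randomness-implies-indep}, invoking the strengthened reading made explicit in the remark immediately preceding our corollary: there it is noted that the argument only requires independence of the \emph{big points} over the \emph{isolated support points} (and vice versa). I proceed by induction on $\alpha := \deg_J(b_1)$, verifying each axiom in the definition of $\QQ$.

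For \prettyref{axiom:indep}, the cited remark records that the $K$-linear independence of $\rv_J(b_1),\dots,\rv_J(b_n)$ in $\RV_J^\alpha$ already follows from the $b_i$ having no common big points, which is our first hypothesis. For \prettyref{axiom:inductive-indep}, assuming $\alpha\ne\deg_J^p(b_1)$, I fix $\delta<0$ sufficiently close to $0$ and suppose for contradiction a non-trivial $K$-combination $\sum_{i,j}k_{i,j}\,b_i^{|\gamma_{i,j}}\in J_{\alpha'}$ with $\gamma_{i,j}\ge\delta$ in $\BigP^{\alpha'}(b_i)$, for some $\deg_J^r(b_1)\le\alpha'<\alpha$. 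The cancellation argument from the proof of Proposition~\ref{prop:randomness-implies-indep}, refined as in the remark so that the witness point is isolated, yields $(i,j)\ne(i',j')$ together with points $\beta\in\supp(b_i)\setminus\operatorname{Lim}(b_i)$ and $\beta'\in\supp(b_{i'})\setminus\operatorname{Lim}(b_{i'})$ satisfying
\[
\gamma_{i,j}-\gamma_{i',j'}=\beta-\beta'.
\]

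Now apply the second hypothesis: $\bigcup_k\BigP(b_k)$ is $\mathbb{Q}$-linearly independent over $\operatorname{Span}_{\mathbb{Q}}(\bigcup_k\supp(b_k)\setminus\operatorname{Lim}(b_k))$. If $\gamma_{i,j}\ne\gamma_{i',j'}$, the combination $1\cdot\gamma_{i,j}+(-1)\cdot\gamma_{i',j'}$ is a non-trivial $\mathbb{Q}$-combination of big points lying in $\operatorname{Span}_{\mathbb{Q}}(\beta,\beta')$, contradicting the hypothesis. Hence $\gamma_{i,j}=\gamma_{i',j'}$, a non-zero common big point of $b_i$ and $b_{i'}$; the first hypothesis then forces $i=i'$, and the admissibility condition $\gamma_{i,j}\ne\gamma_{i,j'}$ for $j\ne j'$ finally forces $j=j'$, contradicting $(i,j)\ne(i',j')$. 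The only delicate step is the extraction of the isolated witnesses $\beta,\beta'$, which is precisely what the preceding remark supplies (exploiting that the isolated points of a support of order type $\omega^{\alpha'}$ still carry order type $\omega^{\alpha'}$, so they must dominate the cancellation); once this is in place, the two hypotheses of the corollary slot in cleanly.
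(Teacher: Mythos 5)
The proposal is correct and takes essentially the same route as the paper: it is precisely a spelled-out version of the argument that the paper compresses into the remark immediately preceding the corollary. Like that remark, you reuse the cancellation argument from Proposition~\ref{prop:randomness-implies-indep} and observe that the offending identity $\gamma_{i,j}-\gamma_{i',j'}=\beta-\beta'$ involves only big points on the left and isolated support points on the right, so the hypothesis that $\bigcup_i\BigP(b_i)$ is $\mathbb{Q}$-linearly independent over $\operatorname{Span}_{\mathbb{Q}}\bigl(\bigcup_i\supp(b_i)\setminus\operatorname{Lim}(b_i)\bigr)$, together with pairwise disjointness of big points, kills the dependence; both you and the paper leave the unwinding of the recursion in the definition of $\QQ$ (i.e.\ that the same cancellation argument applies uniformly to all iterated translated truncations $b_i^{|\gamma}$, using $(b^{|\gamma})^{|\gamma'}=b^{|\gamma+\gamma'}$) equally implicit.
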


\section{Irreducible principal elements}
We obtain our results using induction on the Cantor normal form of $\alpha$ for elements of ordinal value $\omega^{\alpha} $.
\begin{defn}
  We let $A_{\alpha}$ be the $K$-vector space generated by $J_{\alpha}$ and all the elements of the form $bc$ where $b$, $c$ are principal not in $K$ and $\deg_J(bc)=\alpha$; in other words, $A_\alpha \coloneqq J_\alpha + \operatorname{Span}_K(R_\alpha)$.
\end{defn}
\begin{rem}
  As observed in the introduction, if $\alpha$ is additively principal, then $R_\alpha = \varnothing$, hence $A_{\alpha}=J_{\alpha}$.
\end{rem}
We define the following property for $\alpha < \omega_1$.
\begin{itemize}
  \item[$(*)_{\alpha} $] For every $n \in \mathbb{N} $ and $b_1, \ldots , b_n \in P_{\alpha} $ such that $\QQ(b_1, \ldots , b_n) $ holds we have that $b_1, \ldots , b_n $ are $K$-linearly independent over $A_{\alpha}$.
\end{itemize}
Note in particular that if the above property holds, then $b_1, \dots, b_n$ are irreducile. We shall verify that $(*)_{\alpha}$ implies $(*)_{\alpha + \beta}$ for certain choices of $\beta$, and thus prove that the property holds for many ordinals. First, we observe the following easy base case.

\begin{prop}\label{prop:base}
  $(*)_{\omega^\beta}$ is true for every $\beta$.
\end{prop}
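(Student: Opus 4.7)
The plan is to verify that when $\alpha = \omega^\beta$, the property $(*)_\alpha$ collapses essentially to axiom \prettyref{axiom:indep} of the hereditary $\rv_J$-independence predicate $\QQ$, so no induction is needed for this base case.

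First I would record that $\omega^\beta$ is additively principal, so $R_{\omega^\beta} = \varnothing$ by the remark immediately above the definition of $(*)_\alpha$. Consequently $A_{\omega^\beta} = J_{\omega^\beta}$, and the statement to prove reduces to: if $b_1,\dots,b_n \in P_{\omega^\beta}$ with $\QQ(b_1,\dots,b_n)$ and $c_1,\dots,c_n \in K$ satisfy $c_1 b_1 + \cdots + c_n b_n \in J_{\omega^\beta}$, then all $c_i = 0$.

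Next I would verify that $\deg_J(b_i) = \omega^\beta$ for every $b_i \in P_{\omega^\beta}$, so that $\rv_J(b_i)$ is a nonzero element of $\RV_J^{\omega^\beta}$. The inequality $\deg_J(b_i) \le \deg(b_i) = \omega^\beta$ is \prettyref{rem:vJ-deg-inequality}. For the reverse inequality, any $c \equiv b_i \pmod{J + K}$ differs from $b_i$ by an element of $J + K$ whose support (apart from the coefficient of $t^0$) is bounded above by some $x < 0$. Hence $\supp(c) \cap (x, 0) = \supp(b_i) \cap (x, 0)$, and since $\omega^{\omega^\beta}$ is additively principal this final segment of $\supp(b_i)$ still has order type $\omega^{\omega^\beta}$, so $\ot(c) \ge \omega^{\omega^\beta}$. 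Taking the minimum over such $c$ yields $v_J(b_i) \ge \omega^{\omega^\beta}$ and thus $\deg_J(b_i) = \omega^\beta$ as desired.

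Finally, given the reduction above, the assumption $\sum_i c_i b_i \in J_{\omega^\beta}$ means $\deg_J(\sum_i c_i b_i) < \omega^\beta$, which in the $K$-vector space $\RV_J^{\omega^\beta} = J_{\omega^\beta+1}/J_{\omega^\beta}$ reads
\[ \sum_{i=1}^n c_i\, \rv_J(b_i) = \rv_J\Bigl(\sum_{i=1}^n c_i b_i\Bigr) = 0. \]
Clause \prettyref{axiom:indep} in the definition of $\QQ(b_1,\dots,b_n)$ states that $\rv_J(b_1),\dots,\rv_J(b_n)$ are $K$-linearly independent in $\RV_J^{\omega^\beta}$, forcing $c_1 = \cdots = c_n = 0$. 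Since the argument is a direct unwrapping of definitions, no step is genuinely an obstacle; the only point requiring mild care is the verification that $\deg_J$ is exact on $P_{\omega^\beta}$, which is what allows us to transport linear dependence from $\KRR$ into $\RV_J^{\omega^\beta}$.
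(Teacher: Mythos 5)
Your proposal is correct and follows essentially the same route as the paper's own (very short) proof: observe that $\omega^\beta$ additively principal forces $R_{\omega^\beta}=\varnothing$ and hence $A_{\omega^\beta}=J_{\omega^\beta}$, so that $(*)_{\omega^\beta}$ collapses exactly to clause~\prettyref{axiom:indep} of $\QQ$. The one thing you do beyond the paper is spell out the verification that $\deg_J(b_i)=\deg(b_i)=\omega^\beta$ for $b_i\in P_{\omega^\beta}$, using additive principality of $\omega^{\omega^\beta}$ on final segments of the support; the paper treats this as a known fact about principal series (it is stated when comparing $\rv_J$ and $\rv_{\deg}$ on principal series), but making it explicit here is a harmless and reasonable addition.
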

\begin{proof}
  When $\alpha = \omega^\beta$, the condition $\QQ(b_1, \ldots , b_n) $ only states that $\rv_J(b_1), \dots, \rv(b_n)$ are linearly independent, which means exactly that $b_1, \dots, b_n$ are linearly independent over $J_\alpha = A_\alpha$.
\end{proof}

\subsection{The successor case}
\begin{prop} \label{suc}
  $(*)_{\alpha} \Rightarrow (*)_{\alpha+1} $.
\end{prop}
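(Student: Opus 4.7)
I argue by contradiction. Suppose $k_1, \dots, k_n \in K$ are not all zero and $\sum_{i=1}^n k_i b_i \in A_{\alpha+1}$. By definition of $A_{\alpha+1}$, we can write
\[
  \sum_{i=1}^n k_i b_i = r_0 + \sum_{\ell=1}^L \lambda_\ell p_\ell q_\ell
\]
with $r_0 \in J_{\alpha+1}$, principal non-units $p_\ell, q_\ell$ satisfying $\deg_J(p_\ell q_\ell) = \alpha+1$, and scalars $\lambda_\ell \in K$; by swapping factors in each pair I arrange $\deg_J^p(p_\ell) \le \deg_J^p(q_\ell)$. Since the Cantor normal form of $\alpha+1$ ends in $\omega^0$, we have $\deg_J^p(b_i) = 1$ and $\deg_J^r(b_i) = \alpha$ for every $i$. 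Let $\delta < 0$ be produced by \prettyref{axiom:inductive-indep} of $\QQ(b_1, \dots, b_n)$: since the only ordinal in $[\deg_J^r(b_i), \deg_J(b_i))$ is $\alpha$ itself, this condition asserts that any finite selection of residual points $\gamma_{i,j} \in [\delta, 0)$ (distinct within each index) of the $b_i$'s yields a hereditarily $\rv_J$-independent family of translated truncations.

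The main step is to truncate the identity at a residual point and descend to $A_\alpha$. For $\gamma$ close enough to $0$, \prettyref{leib} gives
\[
  (p_\ell q_\ell)^{|\gamma} \equiv p_\ell^{|\gamma} q_\ell + q_\ell^{|\gamma} p_\ell \pmod{J_\alpha},
\]
and expanding $p_\ell^{|\gamma}$ and $q_\ell^{|\gamma}$ via their normal forms (\prettyref{def:normal-form}) writes each summand on the right as a $K$-linear combination of products of principal non-units of combined $\deg_J = \alpha$, so it belongs to $A_\alpha$. Truncating the original identity at $\gamma$ and reducing modulo $A_\alpha$ therefore gives
\[
  \sum_{i=1}^n k_i\, b_i^{|\gamma} \equiv r_0^{|\gamma} \pmod{A_\alpha}.
\]
Since the residual points of each $b_i$ accumulate at $0$, I pick $\gamma_i \in [\delta, 0)$ with $\deg_J(b_i^{|\gamma_i}) = \alpha$ for each $i$ in turn. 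Granting that $r_0^{|\gamma_i}$ can be placed in $A_\alpha$ (see the obstacle below), the relation $\sum_j k_j\, b_j^{|\gamma_i} \equiv 0 \pmod{A_\alpha}$ contains only truncations of $\deg_J = \alpha$ on the left, forming a sub-family of a $\QQ$-family by \prettyref{axiom:inductive-indep}. The induction hypothesis $(*)_\alpha$ then forces $k_i = 0$ (the summand at $j = i$ survives by construction), and ranging $i$ over $\{1, \dots, n\}$ yields the contradiction.

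The principal obstacle is the treatment of the remainder $r_0 \in J_{\alpha+1}$: its truncation $r_0^{|\gamma_i}$ can retain principal components of $\deg_J = \alpha$ that need not lie in $A_\alpha$ a priori. Overcoming this requires adjusting the decomposition $r_0 + \sum \lambda_\ell p_\ell q_\ell$ so that the principal components of $r_0$ of $\deg_J = \alpha$ are either absent or can be absorbed into the relation by enlarging the $\QQ$-family on the left and invoking $(*)_\alpha$ to that enlarged family. A secondary subtlety is the lack of shared residual points among distinct $b_i$'s, handled by truncating at different $\gamma_i$ per index and aggregating the resulting relations, while the sub-family stability of $\QQ$ keeps the induction hypothesis applicable.
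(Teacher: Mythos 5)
Your proof has a genuine gap in the key step, and the difficulty you flag is actually not where the real obstacle lies.

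The gap is in the claim that, after expanding $p_\ell^{|\gamma}$ and $q_\ell^{|\gamma}$ in normal form, \emph{every} term $p_\ell^{|\gamma}q_\ell$ and $q_\ell^{|\gamma}p_\ell$ lands in $A_\alpha$. This fails precisely when $\deg_J(p_\ell^{|\gamma}) = 0$, i.e. $p_\ell^{|\gamma} \in J + K$: in that case the leading term of the normal form of $p_\ell^{|\gamma}$ is a nonzero constant, and $p_\ell^{|\gamma}q_\ell \equiv c\, q_\ell \pmod{J_\alpha}$ for some $c \in K^\times$, where $q_\ell$ is a single principal non-unit of $\deg_J = \alpha$ and need not lie in $A_\alpha$ (it would lie in $\operatorname{Span}_K(R_\alpha)$ only if $q_\ell$ itself were reducible, which is not given). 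Consequently, the relation you want, $\sum_j k_j b_j^{|\gamma} \equiv 0 \pmod{A_\alpha}$, is not available; what one actually gets is $\sum_j k_j b_j^{|\gamma} \equiv \sum_{i \le \ell} c_{i,\gamma} q_i \pmod{A_\alpha}$ with scalars $c_{i,\gamma}\in K$, and the direct application of $(*)_\alpha$ to a single truncation does not go through. The paper handles this by exactly keeping the problematic terms: it observes that the germs $b^{|\gamma}$, for $\gamma \in \Res(b)$, live in the $K$-span of the finitely many $q_i$'s modulo $A_\alpha$, so by a pigeonhole/dimension argument one can choose finitely many $\gamma_1,\dots,\gamma_k$ close to $0$ and scalars $\delta_i$ with $\sum_i \delta_i b^{|\gamma_i} \in A_\alpha$; expanding in the $b_j^{|\gamma_i}$, discarding those with $\deg_J < \alpha$ (they lie in $J_\alpha$), and applying \prettyref{axiom:inductive-indep} together with $(*)_\alpha$ gives the contradiction. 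This intermediate dimension-counting step is the essential content of the successor case and is missing from your proposal.

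The secondary point: the ``principal obstacle'' you raise about $r_0^{|\gamma_i}$ is in fact automatic in the successor case, so no adjustment of the decomposition is needed. Since $r_0 \in J_{\alpha+1}$, we have $\deg_J(r_0) \le \alpha$; for $\gamma$ close enough to $0$ (namely $\gamma > \crit_J(r_0)$, or $\gamma > \sup(\supp(r_0))$ if $r_0 \in J$), the definition of the $J$-critical point gives $\deg_J(r_0^{|\gamma}) < \deg_J(r_0) \le \alpha$, hence $r_0^{|\gamma} \in J_\alpha \subseteq A_\alpha$. (It is only in the non-successor cases, such as $\alpha = \omega^{\alpha_1} + \omega^{\alpha_2}$, that one needs the stronger bound $\deg_J(r^{|\gamma}) < \deg_J^r(b)$ and must restrict $\gamma$ to $\Res(b) \setminus \BigP^{\deg_J^r(b)}(r)$ using \prettyref{bigger points are few}.) So the real difficulty is not the remainder but the terms $p_\ell^{|\gamma}q_\ell$ with $p_\ell^{|\gamma}$ a unit.
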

\begin{cor}\label{cor:omega-beta+n}
  Let $\alpha=\omega^{\beta}+n$ where $n \in \mathbb{N}$.
  \begin{itemize}
    \item For every $b \in P_{\alpha}$ with $\QQ(b)$ we have that $b$, $\rv(b)$ are irreducible and cannot be represented as a sum of reducible elements in $\RV_J^{\alpha}$.
    \item $\rv(A_{\alpha})$ has infinite co-dimension as a $K$-vector subspace of $\RV_J^{\alpha}$.
  \end{itemize}
\end{cor}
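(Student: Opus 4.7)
The plan is to deduce both bullets directly from the linear-independence property $(*)_\alpha$, which is obtained by a short induction. For fixed $\beta$, I would start from the base case Proposition~\ref{prop:base}, which gives $(*)_{\omega^\beta}$, and iterate the successor step Proposition~\ref{suc} exactly $n$ times, obtaining $(*)_{\omega^\beta + k}$ for each $k = 1, \dots, n$, and in particular $(*)_\alpha$ for $\alpha = \omega^\beta + n$.

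For the first bullet, I would apply $(*)_\alpha$ to the singleton $\{b\}$: the property asserts that $b$ is $K$-linearly independent over $A_\alpha = J_\alpha + \operatorname{Span}_K(R_\alpha)$, equivalently $b \notin A_\alpha$, and a fortiori $b \notin R_\alpha$, so $b$ is irreducible. Since $\rv$ vanishes on $J_\alpha$, one has $\rv(A_\alpha) = \operatorname{Span}_K(\rv(R_\alpha))$, and $\rv(b) \notin \rv(A_\alpha)$ is precisely the statement that $\rv(b)$ is not a sum of reducible elements in $\RV_J^\alpha$. As for irreducibility of $\rv(b)$ itself: any non-trivial factorisation $\rv(b) = \rv(c)\rv(d)$ with $c, d$ principal of degree strictly between $0$ and $\alpha$ would give $cd \in R_\alpha$ and $b - cd \in J_\alpha$, hence $b \in A_\alpha$, contradicting what we just established.

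For the second bullet, I would exhibit an infinite family $b_1, b_2, \ldots \in P_\alpha$ for which $\QQ(b_1, \ldots, b_n)$ holds for every $n$; then $(*)_\alpha$ gives that $b_1, \ldots, b_n$ are $K$-linearly independent over $A_\alpha$ for every $n$, so the cosets $\rv(b_1), \rv(b_2), \ldots$ are $K$-linearly independent modulo $\rv(A_\alpha)$ in $\RV_J^\alpha$, which is the infinite co-dimension claim. To build the family I would invoke Proposition~\ref{prop:randomness-implies-indep}: construct pairwise disjoint well-ordered subsets $S_1, S_2, \ldots \subseteq \Rb^{<0}$, each of order type $\omega^\alpha$ with supremum $0$, whose union is $\mathbb{Q}$-linearly independent. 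This is a standard transfinite recursion, interleaving the $\omega$ constructions and, at each stage, choosing a new real outside the countable $\mathbb{Q}$-span of the previously chosen reals and inside a suitably shrinking subinterval of $\Rb^{<0}$ so as to force each $S_i$ to have the required order type and accumulate at $0$. Taking $b_i \coloneqq \sum_{\gamma \in S_i} t^\gamma$ yields an element of $P_\alpha$, and the hypotheses of Proposition~\ref{prop:randomness-implies-indep} are then immediate.

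The whole corollary is essentially a direct unpacking of $(*)_\alpha$, so there is no genuine obstacle; the only mildly delicate point is the construction of the infinite $\QQ$-independent family, but this is a routine transfinite recursion based on cardinality of $\mathbb{Q}$-spans versus intervals of $\Rb$.
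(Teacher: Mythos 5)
Your main line coincides with the paper's: the paper proves $(*)_{\omega^\beta+n}$ by exactly the induction you describe (base case Proposition~\ref{prop:base}, successor step Proposition~\ref{suc}), and then reads off the first bullet from $b\notin A_\alpha$ together with $b-r\notin A_\alpha$ for all $r\in J_\alpha$; your unpacking of the irreducibility of $\rv(b)$ via a principal representative of any purported factorisation, and of ``not a sum of reducibles in $\RV_J^\alpha$'' as $\rv(b)\notin\rv(A_\alpha)=\operatorname{Span}_K(\rv(R_\alpha))$, is the intended reading. The one place you go beyond the paper's written proof is the second bullet (the paper leaves the infinite co-dimension claim implicit), and there your sketch has a small but genuine inaccuracy: Proposition~\ref{prop:randomness-implies-indep} requires that the sets $\cl(\supp(b_i))\setminus\{0\}$ be pairwise disjoint and that their \emph{union} be $\mathbb{Q}$-linearly independent, i.e.\ the hypothesis is on the closures of the supports, not on the supports alone. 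Your recursion only forces the chosen support points to be $\mathbb{Q}$-independent; the accumulation points of each $S_i$ (which for $\alpha\ge 2$ are unavoidable and are determined as limits of your shrinking subintervals) are not controlled, so they could be rational, be shared between different $S_i$, or be $\mathbb{Q}$-dependent on the support points, in which case neither Proposition~\ref{prop:randomness-implies-indep} nor the weakened corollaries following it (which still demand independence of the isolated support points over $\operatorname{Span}_{\mathbb{Q}}$ of the sets $\BigP(b_i)$) applies as stated. The repair is routine: first choose generically a $\mathbb{Q}$-independent countable set of intended accumulation points with the right order-theoretic pattern for $\omega^\alpha$ and supremum $0$, then choose the support points in shrinking intervals around these targets, always avoiding the $\mathbb{Q}$-span of everything selected so far, so that the full closures satisfy the hypothesis. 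With that adjustment, your argument for the second bullet is correct and usefully fills a gap the paper does not spell out.
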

\begin{rem}
  A special case would be $\alpha = n$.
\end{rem}
\begin{proof}[Proof of \prettyref{cor:omega-beta+n}]
  We show by induction that $(*)_{\alpha}$ holds for every $\alpha = \omega^{\beta}+n$.
  The induction base is $\alpha=\omega^{\beta}$, thus \prettyref{prop:base}. The induction step is Proposition~\ref{suc}. Hence, for ever $b \in P_{\alpha}$ where $\alpha = \omega^{\beta}+n$ with $\QQ(b)$, we have $b \notin A_{\alpha}$ and for every $r \in J_{\alpha}$ we have $b-r \notin A_{\alpha}$, hence $b$, $\rv(b)$ are irreducible.
\end{proof}

\begin{proof}[Proof of Proposition~\ref{suc}]
  The successor stage is rather simple, because there are no ordinals between $\alpha$ and $\alpha+1$.

  Suppose by contradiction that there are $b_1, \ldots , b_n \in P_{\alpha+1}$ and $\lambda_1, \ldots , \lambda_n \in K^{\times}$ such that $b=\sum_{i=1}^n \lambda_ib_i \in A_{\alpha+1}$. Note that by the assumption $\QQ(b_1,\dots,b_n)$, we have $b \in P_{\alpha+1}$. Write $b = \sum_{i=1}^m p_iq_i +r $ where $\deg_J(q_i) \ge \deg_J(p_i) > 0$, $\deg_J(p_iq_i)= \alpha+1$ and $r \in J_{\alpha+1}$. For every $\gamma \in \Res(b)$ we have by Proposition~\ref{leib}
  \[b^{|\gamma} = \sum_{i=1}^m p_i^{|\gamma}q_i + q_i^{|\gamma}p_i \mod J_{\alpha}.\]
  If $p_i^{|\gamma} \notin J+K$, then in fact $p_i^{|\gamma}q_i \in A_{\alpha}$. Hence, after rearranging the indices, we may write $b^{|\gamma} = \sum_{i=1}^\ell p_i^{|\gamma}q_i \mod A_{\alpha}$ where $p_i^{|\gamma} \in J+K$ for $i \leq \ell \leq m$. Hence, $\{b^{|\gamma} : \gamma \in \Res(b)\} $ is in the $K$-linear span of $\{q_1, \ldots , q_\ell\} \cup A_\alpha$, and since $\Res(b)$ is infinite, there must be a non-trivial $K$-linear dependence over $A_{\alpha}$.

  Pick some $\delta_1, \ldots , \delta_k \in K^{\times}$ and $\gamma_1, \ldots , \gamma_k \in \Res(b)$ arbitrarily close to $0$ such that $\sum_{i=1}^k \delta_ib^{|\gamma_i} \in A_{\alpha}$. Expanding the definition of $b$, we find that the tuple $\langle b_j^{|\gamma_i} \rangle_{1\leq j \leq n}^{1 \leq i \leq k}$ is linearly dependent over $A_\alpha$. Recall that $\deg_J(b_j^{|\gamma_i}) \leq \alpha$, and if the inequality is strict, then $b_j^{|\gamma_i} \in A_\alpha$. Thus the subtuple of the series of ordinal value $\omega^\alpha$ is also linearly dependent over $A_\alpha$. On the other hand, since $\QQ(b_1,\dots,b_n)$ holds, we also know that
  \[ \QQ(\langle b_j^{|\gamma_i} : \deg_J(b_j^{|\gamma_i}) = \alpha\rangle), \]
  as soon as $\gamma_1, \ldots , \gamma_k$ are sufficiently close to $0$, which contradicts $(*)_{\alpha}$.
\end{proof}

\subsection{The case \texorpdfstring{$\omega^{\alpha_1}+\omega^{\alpha_2}$}{ω\^{}α₁ + ω\^{}α₂}}
\begin{prop}\label{two terms}
  $(*)_{\omega^{\alpha_1}+\omega^{\alpha_2}}$ holds for all $\alpha_1 \geq \alpha_2$.
\end{prop}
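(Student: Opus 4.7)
The plan is to mimic the argument of Proposition~\ref{suc}, reducing the problem to the base case $(*)_{\omega^{\alpha_1}}$ via an analysis of germs at residual points. Assume for contradiction that $b_1,\ldots,b_n \in P_\alpha$ satisfy $\QQ(b_1,\ldots,b_n)$ and that $b \coloneqq \sum_i \lambda_i b_i \in A_\alpha$ for some $\lambda_i \in K^\times$. Write $b = \sum_{j=1}^m p_j q_j + r$ with $p_j, q_j$ principal non-units, $\deg_J(p_j q_j) = \alpha$, and $r \in J_\alpha$. Since both summands of $\alpha = \omega^{\alpha_1} + \omega^{\alpha_2}$ are additively principal, the only Hessenberg splitting of $\alpha$ into two non-zero parts is $\omega^{\alpha_1} \oplus \omega^{\alpha_2}$; after relabelling we may take $p_j \in P_{\omega^{\alpha_2}}$ and $q_j \in P_{\omega^{\alpha_1}}$, so that $\deg_J^r(p_j) = 0$ and $\deg_J^p(p_j) \leq \deg_J^p(q_j)$, and Proposition~\ref{leib} applies.

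The next step is to examine the germs $b^{|\gamma}$ as $\gamma$ ranges over $\Res(b)$, which accumulates at $0$, and where $\deg_J(b^{|\gamma}) = \omega^{\alpha_1}$. By Proposition~\ref{leib},
\[
(p_j q_j)^{|\gamma} \equiv p_j^{|\gamma} q_j + q_j^{|\gamma} p_j \pmod{J_{\omega^{\alpha_1}}},
\]
the error having degree $< \deg_J^r(p_j) \oplus \deg_J(q_j) = \omega^{\alpha_1}$; and for $\gamma$ close to $0$ also $r^{|\gamma} \in J_{\omega^{\alpha_1}}$. Projecting to $\RV_J^{\omega^{\alpha_1}}$: for $\gamma > \max_j\{\crit_J(p_j), \crit_J(q_j)\}$, one has $\deg_J(p_j^{|\gamma}) < \omega^{\alpha_2}$ and $\deg_J(q_j^{|\gamma}) < \omega^{\alpha_1}$, so each $p_j^{|\gamma} q_j$ contributes a degree-$\omega^{\alpha_1}$ piece $c_j(\gamma) \rv_J(q_j)$ precisely when $p_j^{|\gamma} \in K+J$ (with $c_j(\gamma)$ its constant modulo $J$), while its higher-degree components have $\deg_J \in (\omega^{\alpha_1},\alpha)$ and must cancel collectively because $\rv_J(b^{|\gamma})$ itself has degree $\omega^{\alpha_1}$; the $q_j^{|\gamma} p_j$ terms are absorbed into $J_{\omega^{\alpha_1}}$ when $\alpha_1 > \alpha_2$, and contribute symmetrically $d_j(\gamma) \rv_J(p_j)$ in the case $\alpha_1 = \alpha_2$. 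Hence
\[
\rv_J(b^{|\gamma}) \in \operatorname{Span}_K\bigl\{\rv_J(q_1),\ldots,\rv_J(q_m),\rv_J(p_1),\ldots,\rv_J(p_m)\bigr\} \subseteq \RV_J^{\omega^{\alpha_1}},
\]
a fixed finite-dimensional $K$-subspace.

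Choosing countably many $\gamma_k \in \Res(b)$ tending to $0$, a non-trivial $K$-linear relation $\sum_k \mu_k \rv_J(b^{|\gamma_k}) = 0$ in $\RV_J^{\omega^{\alpha_1}}$ must then exist. Substituting $b^{|\gamma_k} = \sum_i \lambda_i b_i^{|\gamma_k}$ and restricting to the indices with $\deg_J(b_i^{|\gamma_k}) = \omega^{\alpha_1}$ yields a non-trivial $K$-linear relation among the $\rv_J$-classes of this subtuple. But by the inductive clause \prettyref{axiom:inductive-indep} of $\QQ(b_1,\ldots,b_n)$, applied at the value $\omega^{\alpha_1} = \deg_J^r(b_1)$ and for $\gamma_k$ sufficiently close to $0$, this subtuple is hereditarily $\rv_J$-independent; hence its $\rv_J$-classes are $K$-linearly independent by the base case $(*)_{\omega^{\alpha_1}}$ of Proposition~\ref{prop:base}, which is the desired contradiction. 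The main obstacle I anticipate is justifying the cancellation of the higher-degree contributions of the $p_j^{|\gamma} q_j$ so that the right-hand side really projects into $\RV_J^{\omega^{\alpha_1}}$; this is forced by the Leibniz identity modulo $J_{\omega^{\alpha_1}}$ together with the known degree of $b^{|\gamma}$, but the bookkeeping is delicate and is where the hypothesis $\gamma > \crit_J(p_j)$ is essential to keep those higher degrees strictly below $\alpha$.
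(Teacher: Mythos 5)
The proposal follows the broad outline of the paper's proof (germs at residual points, reduction via Proposition~\ref{leib} to the base case $(*)_{\omega^{\alpha_1}}$, concluding with the inductive clause of $\QQ$), but it elides precisely the new obstacles that make this case genuinely harder than the successor case, and the way it elides them is not merely sketchy but relies on claims that are false or unjustified.

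First, the assertion that ``for $\gamma$ close to $0$ also $r^{|\gamma} \in J_{\omega^{\alpha_1}}$'' is false in general. Here $r \in J_\alpha$ with $\alpha = \omega^{\alpha_1}+\omega^{\alpha_2}$, so $\deg_J(r)$ can be as large as $\omega^{\alpha_1}+\beta$ with $\beta < \omega^{\alpha_2}$; the residual points of $r$ then accumulate at $0$ and satisfy $\deg_J(r^{|\gamma}) = \omega^{\alpha_1}$. The paper copes with this explicitly: it proves Lemma~\ref{bigger points are few} to show that $\BigP^{\omega^{\alpha_1}}(r)$ has order type strictly smaller than $\omega^{\omega^{\alpha_2}} = \ot(\Res(b))$, and then restricts to $\Gamma = \Res(b)\setminus\BigP^{\omega^{\alpha_1}}(r)$, which still has supremum $0$. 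Without this step the chosen germs may carry unwanted contributions from $r$ of degree exactly $\omega^{\alpha_1}$, which poison the rest of the argument.

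Second, and more seriously, the claim that $\rv_J(b^{|\gamma})$ lands in the \emph{fixed $K$-span} of $\rv_J(q_1),\dots,\rv_J(q_m)$ (and $\rv_J(p_j)$ when $\alpha_1=\alpha_2$) is unjustified. Knowing that $\sum_j p_j^{|\gamma}q_j$ has degree $\omega^{\alpha_1}$ tells you that the degree-$>\omega^{\alpha_1}$ parts cancel, but it does not tell you that what remains modulo $J_{\omega^{\alpha_1}}$ is a $K$-combination of the $q_j$'s: when $p_j^{|\gamma}\notin K+J$, the product $p_j^{|\gamma}q_j$ is genuinely an element of a $J_{\omega^{\alpha_2}}$-module, and the portion of that module inside $J_{\omega^{\alpha_1}+1}$, reduced modulo $J_{\omega^{\alpha_1}}$, is not a priori the $K$-span of the $\rv_J(q_j)$. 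The paper never asserts this; instead it observes that the germs $b^{|\gamma}$ all lie in a finitely generated $J_{\omega^{\alpha_1}}$-module (using that $J_{\omega^{\alpha_1}}$ is an integral domain), so $2m+1$ of them admit a non-trivial $J_{\omega^{\alpha_1}}$-linear relation, and then it proves Lemma~\ref{lincomb} (via Lemmas~\ref{lem:small-princ-value},~\ref{smallder}, i.e.\ a further translated truncation of the combination at a carefully chosen $\gamma_0$) to upgrade those $J_{\omega^{\alpha_1}}$-coefficients to $K$-coefficients modulo $J_{\omega^{\alpha_1}}$. That is the central new mechanism of this proposition, and your proposal bypasses it entirely.

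Finally, a smaller point: when you substitute $b^{|\gamma_k} = \sum_i\lambda_i b_i^{|\gamma_k}$ and restrict to indices with $\deg_J(b_i^{|\gamma_k}) = \omega^{\alpha_1}$, you implicitly assume no $b_i^{|\gamma_k}$ has degree $>\omega^{\alpha_1}$. That needs an argument — it is Lemma~\ref{sameres} in the paper, which uses $\QQ$ at the level $\RV_J$ to rule out $\deg_J(b_i^{|\gamma}) > \deg_J^r(b)$ for $\gamma\in\Res(b)$ close to $0$ — and should not be left implicit.

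So: the skeleton (germs at residual points, Leibniz, finish with $\QQ$ and $(*)_{\omega^{\alpha_1}}$) matches the paper, but the three lemmas that actually carry the new difficulty of the two-term case — \ref{bigger points are few}, \ref{lincomb}, and \ref{sameres} — are each replaced by an unargued (and in the first two cases incorrect) shortcut. The proof as written does not go through.
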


We wish to proceed as in the proof of Proposition~\ref{suc}, but we now face new obstacles. Let us retrace the proof. Suppose $b = \sum_{i=1}^n \lambda_ib_i \in A_\alpha$ for some $\lambda_i$ not all zero. Write $b = \sum_{i=1}^m p_iq_i + r$ with $\deg_J(p_i) = \omega^{\alpha_1}$, $\deg_J(q_i) = \omega^{\alpha_2}$, $\deg_J(r) < \omega^{\alpha_1} + \omega^{\alpha_2}$. If we now consider $b^{|\gamma}$ for some arbitrary $\gamma \in \Res(b)$, we may have $\deg_J(r^{|\gamma}) \geq \deg_J^r(b) = \omega^{\alpha_1}$.

To overcome this issue, we find a large $\Gamma \subseteq \Res(b)$, namely $\ot(\Gamma) = v_J^p(b) = \omega^{\omega^{\alpha_2}}$ and $\sup(\Gamma) = 0$, on which we have $\deg_J(r^{|\gamma}) < \deg_J^r(b) = \omega^{\alpha_1}$.

\begin{lem}\label{bigger points are few}
  For every principal series $c$ and ordinal $\beta$,
  \[ \omega^{\beta} \cdot \ot(\{\gamma : \deg_J(c^{|\gamma}) \geq \beta\}) \leq v_J(c). \]
\end{lem}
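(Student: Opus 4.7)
The plan is to enumerate the set $\Gamma \coloneqq \{\gamma : \deg_J(c^{|\gamma}) \geq \beta\}$ in increasing order as $\{\gamma_\xi\}_{\xi < \eta}$, where $\eta = \ot(\Gamma)$. This is well defined because, outside $\cl(\supp(c)) \setminus \{0\}$, the truncation $c^{|\gamma}$ lies in $J$ and so has $\deg_J = -\infty$; hence $\Gamma$ sits inside the well-ordered set $\cl(\supp(c)) \setminus \{0\}$. The pivotal observation will be: for every $\delta < \gamma$ with $\gamma \in \Gamma$, one has $\ot(c_{|(\delta, \gamma]}) \geq \omega^\beta$. Indeed, decomposing $c^{|\gamma} = t^{\delta - \gamma}\, c^{|\delta} + t^{-\gamma}\, c_{|(\delta, \gamma]}$ and noting that $t^{\delta - \gamma} \in J$ (since $\delta - \gamma < 0$), one gets $c^{|\gamma} \equiv t^{-\gamma}\, c_{|(\delta, \gamma]} \pmod{J}$, so $\omega^\beta \leq v_J(c^{|\gamma}) = v_J(t^{-\gamma}\, c_{|(\delta, \gamma]}) \leq \ot(c_{|(\delta, \gamma]})$.

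With this in hand, I would run a transfinite induction on $\xi < \eta$ to establish $\ot(c_{|\gamma_\xi}) \geq \omega^\beta \cdot (\xi + 1)$. The base case $\xi = 0$ is immediate by applying the pivotal step to any $\delta$ strictly below $\gamma_0$. The successor step and the ``jump-limit'' case (where $\gamma_\xi$ is strictly above $\sup_{\xi' < \xi} \gamma_{\xi'}$) both combine the induction hypothesis with the pivotal step on the fresh interval $(\sup_{\xi' < \xi} \gamma_{\xi'}, \gamma_\xi]$, giving a contribution of $\omega^\beta$ on top of $\omega^\beta \cdot \xi$. Passing to the supremum as $\xi \to \eta$ and using that $c$ is principal, so $\ot(c) = v_J(c)$, then yields $\omega^\beta \cdot \eta \leq v_J(c)$.

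The main obstacle is the remaining limit case, where $\gamma_\xi = \sup_{\xi' < \xi} \gamma_{\xi'}$: no fresh interval is available and the induction hypothesis alone only delivers $\omega^\beta \cdot \xi$. The way around is to bootstrap the degree at $\gamma_\xi$: a direct computation gives the identity $(c^{|\gamma_\xi})^{|\gamma_{\xi'} - \gamma_\xi} = c^{|\gamma_{\xi'}}$, showing that $c^{|\gamma_\xi}$ itself carries infinitely many level-$\beta$ big points $\gamma_{\xi'} - \gamma_\xi$ accumulating to $0$. This forces $\deg_J(c^{|\gamma_\xi}) \geq \beta + 1$; combined with the fact that $\sup(c) = 0 > \gamma_\xi$, so $\supp(c) \cap (\gamma_\xi, 0)$ is non-empty and contributes additional mass to $c$ above $\gamma_\xi$, this supplies precisely the missing $\omega^\beta$ needed to close the induction at such accumulation limits.
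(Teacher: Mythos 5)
Your pivotal observation — that for $\gamma\in\Gamma$ and any $\delta<\gamma$ the interval $(\delta,\gamma]$ carries at least $\omega^\beta$ points of $\supp(c)$ — is correct and is exactly the heart of the paper's argument; from there the paper just sums the disjoint contributions of the consecutive intervals over $i<\eta$ and invokes \cite[Lem.\ 4.7]{Ber2000}. Your transfinite-induction reformulation is equivalent in spirit, but the step at accumulation limits is genuinely flawed, and the proposed repair does not close the gap. When $\gamma_\xi=\sup_{\xi'<\xi}\gamma_{\xi'}$, the bootstrap does yield $v_J(c^{|\gamma_\xi})\ge\omega^{\beta+1}$ and hence $\ot(c_{|\gamma_\xi})\ge\omega^{\beta+1}=\omega^\beta\cdot\omega$; but as soon as $\xi\ge\omega$ this is no stronger than the bound $\omega^\beta\cdot\xi$ you already get from the induction hypothesis by continuity, and in particular it never reaches the claimed $\omega^\beta\cdot(\xi+1)$. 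The closing appeal to $\supp(c)\cap(\gamma_\xi,0)$ cannot help either: that mass lies strictly above $\gamma_\xi$, so it contributes to $\ot(c)$ but not to the quantity $\ot(c_{|\gamma_\xi})$ your induction is actually tracking.

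The remedy is simply to drop the ``$+1$'' and prove $\ot(c_{|\gamma_\xi})\ge\omega^\beta\cdot\xi$ instead. The base and successor cases go through exactly as before, and every limit case — accumulation or jump — is now automatic by continuity of $\xi\mapsto\omega^\beta\cdot\xi$, with no bootstrap needed. Passing to the supremum then gives $\ot(c)\ge\omega^\beta\cdot\eta$ provided $\eta$ is a limit ordinal; this holds whenever $\Gamma\neq\varnothing$ and $\beta\ge1$, since big points accumulate to $0$ and so $\Gamma$ has no greatest element, while $\beta=0$ is handled by a short direct check. With this adjustment your argument and the paper's become the same proof written in two styles, and the paper's ``sum over disjoint intervals'' phrasing avoids the transfinite bookkeeping altogether.
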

\begin{proof}
  Let $c$, $\beta$ as in the assumptions and enumerate $\{\gamma : \deg_J(c^{|\gamma}) \geq \beta\}$ as $\{\gamma_i : i < \alpha\}$. By construction,
  \[ \ot(\supp(c) \cap [\gamma_i, \gamma_{i+1})) \geq \deg_J(c^{|\gamma_{i+1}}) \geq \omega^{\beta}. \]
  It follows by \cite[Lem.\ 4.7]{Ber2000} that
  \[ v_J(c) = \ot(c) \geq \ot(\supp(c) \cap [\gamma_0, 0)) \geq \omega^\beta \cdot \alpha. \qedhere\]
\end{proof}
It follows at once that for some $\delta < 0$ sufficiently small, $\BigP^{\omega^{\alpha_1}}(r) \cap [\delta, 0]$ has order type strictly smaller that $\omega^{\omega^{\alpha_2}}$, while $\ot(\Res(b)) = \omega^{\omega^{\alpha_2}}$ and $\sup(\Res(b)) = 0$ by \cite[Lem.\ 6.8]{Ber2000}. Therefore, $\Gamma = \Res(b) \setminus \BigP^{\omega^{\alpha_2}}(r)$ has order type $\omega^{\omega^{\alpha_2}}$ and $\sup(\Gamma) = 0$. By construction, for $\gamma \in \Gamma$ we have $\deg_J(r^{|\gamma}) < \omega^{\alpha_1}$.

We may now apply Proposition~\ref{leib} and deduce that for every $\gamma \in \Gamma$ close enough to $0$ we have
\[ b^{|\gamma} = \sum_{i=1}^m p_i^{|\gamma}q_i + q_i^{|\gamma}p_i \mod J_{\omega^{\alpha_1}}. \]
For every $1 \le i \le m$ we have $p_i^{|\gamma}, q_i^{|\gamma} \in J_{\omega^{\alpha_1}}$. Now $J_{\omega^{\alpha_1}} $ is an integral domain, thus the series $b^{|\gamma}$ for $\gamma \in \Gamma$ lie in a finitely generated $J_{\omega^{\alpha_1}}$-module. It follows that for some $k \leq 2m + 1$ there exist some $\delta_1, \ldots \delta_k \in J_{\omega^{\alpha_1}}$ not all zero and $\gamma_1, \ldots , \gamma_k \in \Gamma $ close enough to $0$ such that $\sum_{i=1}^k \delta_i b^{|\gamma_i} \in J_{\omega^{\alpha_1}} $. To conclude, we need to improve the coefficients $\delta_i$ so that they lie in $K$.

\begin{lem}\label{lem:small-princ-value}
  Let $c$, $d$, $e$ be series such that $\deg_J(c) < \deg_J(e)$ and $\deg_J(d) < \deg_J^p(e)$. Then $\deg_J(cd) < \deg_J(e)$.
\end{lem}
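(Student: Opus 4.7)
The plan is to reduce the claim to an ordinal-arithmetic statement. By multiplicativity of $\deg_J$, one has $\deg_J(cd) = \deg_J(c) \oplus \deg_J(d)$, so setting $\alpha \coloneqq \deg_J(c)$, $\beta \coloneqq \deg_J(d)$, $\omega^{\mu} \coloneqq \deg_J^p(e)$ and $\rho \coloneqq \deg_J^r(e)$, the lemma reduces to the following purely ordinal statement: if $\alpha < \rho + \omega^{\mu} = \deg_J(e)$ and $\beta < \omega^{\mu}$, then $\alpha \oplus \beta < \deg_J(e)$. Observe that by the definition of $\deg_J^r$ and $\deg_J^p$, the sum $\rho + \omega^{\mu}$ is already in Cantor normal form, so in particular $\rho + \omega^{\mu} = \rho \oplus \omega^{\mu}$.

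Next I would split on whether $\alpha < \rho$ or $\rho \leq \alpha < \deg_J(e)$. In the first case, strict monotonicity of Hessenberg sum in each argument yields $\alpha \oplus \beta < \rho \oplus \omega^{\mu} = \deg_J(e)$ immediately. In the second case, since $\rho + \omega^{\mu}$ is in Cantor normal form there is a unique $\alpha' < \omega^{\mu}$ with $\alpha = \rho + \alpha' = \rho \oplus \alpha'$. Every exponent appearing in the Cantor normal forms of $\alpha'$ and of $\beta$ is strictly less than $\mu$, so the same is true of the Cantor normal form of $\alpha' \oplus \beta$ (which is just the merge of the two forms); this is exactly the additive principality of $\omega^{\mu}$, and it gives $\alpha' \oplus \beta < \omega^{\mu}$. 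Associativity and commutativity of $\oplus$ then yield $\alpha \oplus \beta = \rho \oplus (\alpha' \oplus \beta) < \rho \oplus \omega^{\mu} = \deg_J(e)$.

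The substantive step is the second case: one has to use both that $\omega^{\mu} = \deg_J^p(e)$ is additively principal and that $\rho + \omega^{\mu}$ is already in Cantor normal form, which is precisely the structural information encoded in the decomposition $\deg_J(e) = \deg_J^r(e) + \deg_J^p(e)$. I do not anticipate any genuine obstacle: once the reduction to the ordinal statement is made explicit, the whole argument is routine bookkeeping with Hessenberg sums and would fit in a few lines.
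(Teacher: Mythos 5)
Your proof is correct and follows essentially the same route as the paper's: reduce to ordinal arithmetic via multiplicativity of $\deg_J$, exploit the fact that $\deg_J(e) = \deg_J^r(e) + \deg_J^p(e)$ is in Cantor normal form, and invoke the additive principality of $\deg_J^p(e)$. The only cosmetic difference is that you split into the cases $\alpha < \rho$ and $\rho \le \alpha$, whereas the paper handles both at once by choosing a single ordinal $0 < \epsilon < \deg_J^p(e)$ with $\deg_J(c) \le \deg_J^r(e) + \epsilon$.
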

\begin{proof}
  By looking at the Cantor Normal form of $\deg_J(e)$, we observe that there is an ordinal $\epsilon > 0$ such that $\deg_J(c) \leq \deg_J^r(e) + \epsilon$ and $\epsilon < \deg_J^p(e)$. Since $\deg_J^p(e)$ is additively principal,
  \[ \deg_J(cd) \leq \deg_J^r(e) \oplus \epsilon \oplus \deg_J(d) < \deg_J^r(e) \oplus \deg_J^p(e) = \deg_J(e).\qedhere \]
\end{proof}

\begin{lem}\label{smallder}
  Let $c$, $d$ be series such that $\deg_J(d) < \deg_J^p(c)$. Then for every $\gamma$ close enough to $0$ we have $(cd)^{|\gamma} = cd^{|\gamma}$ mod $J_{\deg_J(c)}$.
\end{lem}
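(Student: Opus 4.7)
The plan is to apply the convolution formula (Proposition~\ref{conv}) and then invoke \prettyref{lem:small-princ-value} to annihilate every term except the one at $\delta = 0$. Implicitly we may assume $c \notin J$ and indeed $v_J(c) > 1$, so that $\deg_J^p(c)$ makes sense. By Proposition~\ref{conv},
\[ (cd)^{|\gamma} \equiv \sum_{\delta + \varepsilon = \gamma} c^{|\delta} d^{|\varepsilon} \mod J, \]
and since $J \subseteq J_{\deg_J(c)}$, the same congruence holds modulo $J_{\deg_J(c)}$. The summand at $\delta = 0$ contributes $c^{|0} \cdot d^{|\gamma} = c \cdot d^{|\gamma}$, which is the intended right-hand side.

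The heart of the argument is to restrict to $\gamma > \crit_J(c)$, the natural ``close to $0$'' threshold. Then every other summand has $\delta \in [\gamma, 0)$ with $\delta > \crit_J(c)$, hence $\deg_J(c^{|\delta}) < \deg_J(c)$ by the definition of $\crit_J$. For the matching $\varepsilon = \gamma - \delta \in \Rb^{\le 0}$, truncation can only weakly decrease the ordinal value, so $\deg_J(d^{|\varepsilon}) \leq \deg_J(d) < \deg_J^p(c)$ by hypothesis. Applying \prettyref{lem:small-princ-value} with $e = c$ then yields $\deg_J(c^{|\delta} d^{|\varepsilon}) < \deg_J(c)$, placing each such term in $J_{\deg_J(c)}$.

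Summing over all $\delta \neq 0$, every contribution is absorbed into $J_{\deg_J(c)}$, leaving $(cd)^{|\gamma} \equiv c \cdot d^{|\gamma} \mod J_{\deg_J(c)}$ for every $\gamma > \crit_J(c)$. I do not anticipate any genuine obstacle: Proposition~\ref{conv} already guarantees the convolution sum is finite modulo $J$ (and a fortiori modulo the larger ideal $J_{\deg_J(c)}$), so the argument reduces to bookkeeping. The one substantive input is matching the hypothesis $\deg_J(d) < \deg_J^p(c)$ to the precise statement of \prettyref{lem:small-princ-value}, which is exactly what allows us to control the boundary contribution $c^{|\gamma} \cdot d$ (at $\delta = \gamma$) and the interior convolution terms $c^{|\delta} d^{|\varepsilon}$ (at $\delta \in (\gamma, 0)$) uniformly.
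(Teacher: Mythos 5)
Your strategy — applying the convolution formula and then invoking Lemma~\ref{lem:small-princ-value} to kill every off-diagonal term — is exactly the paper's strategy, and almost all of the bookkeeping is right. However, the step ``truncation can only weakly decrease the ordinal value, so $\deg_J(d^{|\varepsilon}) \leq \deg_J(d)$'' is false, and this is a genuine gap, not a routine detail. The quantity $v_J$ measures the germ of a series near $0$ modulo $J+K$, and truncating can expose a tail that was previously absorbed into $J$. Concretely, take $d = 1 + \sum_{n} t^{-1-\frac{1}{n}}$: here $v_J(d) = 1$ since $d \equiv 1 \pmod{J}$, yet $d^{|-1} = \sum_{n} t^{-\frac{1}{n}}$ has $v_J(d^{|-1}) = \omega$. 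So the inequality $\deg_J(d^{|\varepsilon}) \leq \deg_J(d)$ can badly fail, and with it the bound you need to invoke Lemma~\ref{lem:small-princ-value}.

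The fix is exactly what the paper does: the ``close enough to $0$'' threshold must involve $\crit_J(d)$ as well as $\crit_J(c)$. Once you take $\gamma > \max\{\crit_J(c),\crit_J(d)\}$, every $\varepsilon$ appearing in the convolution with $\varepsilon < 0$ satisfies $\varepsilon > \crit_J(d)$, and the \emph{definition} of $\crit_J(d)$ (not a general monotonicity of $v_J$ under truncation) gives $v_J(d^{|\varepsilon}) < v_J(d)$, hence $\deg_J(d^{|\varepsilon}) < \deg_J(d) < \deg_J^p(c)$; for $\varepsilon = 0$ you use the hypothesis directly. With that correction your argument closes and coincides with the paper's proof.
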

\begin{proof}
  For every $\crit_J(c) < \gamma < 0$ we have $\deg_J(c^{|\gamma}) < \deg_J(c)$, while for every $\crit_J(d) < \delta \leq 0$ we also have $\deg_J(d^{|\delta}) < \deg_J^p(c)$, thus by \prettyref{lem:small-princ-value} we have $\deg_J(d^{|\delta}c^{|\gamma}) < \deg_J(c)$. Hence, for every $\gamma$ sufficiently close to $0$ Proposition~\ref{conv} implies that
  \[(cd)^{|\gamma}=cd^{|\gamma} \mod J_{ \deg_J(c)}. \qedhere\]
\end{proof}
We now can show that one can find a linear combination with coefficients in $K$ rather than just in $J_{\omega^{\alpha_1}} $.
\begin{lem} \label{lincomb}
  Let $c_1,\ldots , c_\ell \notin J$ be series such that $\deg_J(c_i)=\deg_J(c_1)$ for every $ 1 \le i \le \ell$, and let $\varepsilon_1,\ldots, \varepsilon_\ell \in J_{\deg_J^p(c_1)}$. Then there exist some $\mu_1, \ldots ,  \mu_\ell \in K$ not all zero and $\gamma_0$ arbitrarily close to $0$ such that \[\left(\sum_{i=1}^\ell \varepsilon_ic_i\right)^{|\gamma_0} = \sum_{i=1}^\ell \mu_ic_i \mod J_{\deg_J(c_1)}.\]
\end{lem}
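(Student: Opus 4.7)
The plan is to use \prettyref{smallder} to transfer the truncation from each product $\varepsilon_i c_i$ onto $\varepsilon_i$ alone, and then identify $\mu_i$ with the constant term of $\varepsilon_i^{|\gamma_0}$.

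First, since $\deg_J(c_i) = \deg_J(c_1)$, the principal degrees agree: $\deg_J^p(c_i) = \deg_J^p(c_1)$. Hence $\deg_J(\varepsilon_i) < \deg_J^p(c_i)$, and \prettyref{smallder} applies to each pair $(c_i, \varepsilon_i)$. For every $\gamma_0$ sufficiently close to $0$ (say, larger than $\max_i \crit_J(c_i)$),
\[ (\varepsilon_i c_i)^{|\gamma_0} \equiv c_i \varepsilon_i^{|\gamma_0} \pmod{J_{\deg_J(c_1)}}, \]
and summing over $i$:
\[ \bigl(\textstyle\sum_i \varepsilon_i c_i\bigr)^{|\gamma_0} \equiv \sum_i c_i \varepsilon_i^{|\gamma_0} \pmod{J_{\deg_J(c_1)}}. \]

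Second, I set $\mu_i \coloneqq \varepsilon_{i,\gamma_0}$, the coefficient of $t^{\gamma_0}$ in $\varepsilon_i$ (equal to $0$ when $\gamma_0 \notin \supp(\varepsilon_i)$), and decompose $\varepsilon_i^{|\gamma_0} = \mu_i + \tilde\nu_i$ with $\tilde\nu_i$ supported in $\mathbb{R}^{<0}$. Whenever $\gamma_0$ is not a left-accumulation point of $\supp(\varepsilon_i)$, the support of $\tilde\nu_i$ is bounded above by some negative number, so $\tilde\nu_i \in J$ and therefore $c_i \tilde\nu_i \in J \subseteq J_{\deg_J(c_1)}$. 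This gives
\[ \sum_i c_i \varepsilon_i^{|\gamma_0} \equiv \sum_i \mu_i c_i \pmod{J_{\deg_J(c_1)}}, \]
which combines with the previous congruence to yield the desired statement.

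The main obstacle is selecting $\gamma_0$ arbitrarily close to $0$ satisfying simultaneously: (i) $\gamma_0 \in \bigcup_i \supp(\varepsilon_i)$, so that at least one $\mu_i \neq 0$, and (ii) $\gamma_0$ is not a left-accumulation point of any $\supp(\varepsilon_i)$. By finiteness of the index, (ii) reduces to $\gamma_0$ being isolated-from-below in the well-ordered union $S \coloneqq \bigcup_i \supp(\varepsilon_i)$. In any well-ordered subset of $\mathbb{R}^{\leq 0}$ accumulating at $0$, the isolated-from-below points correspond to non-limit ordinals in its ordinal indexing, and these are cofinal in any limit; they therefore themselves accumulate at $0$, so a suitable $\gamma_0$ can be chosen as claimed.
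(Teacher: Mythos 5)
Your proof is correct and follows essentially the same route as the paper's: apply Lemma~\ref{smallder} to each product $\varepsilon_i c_i$, then take $\gamma_0$ to be an isolated point of $\bigcup_i\supp(\varepsilon_i)$ near $0$ so that $\varepsilon_i^{|\gamma_0}\equiv\mu_i\pmod J$ with $\mu_i=\varepsilon_{i,\gamma_0}$. The only difference is presentational: you make explicit the decomposition $\varepsilon_i^{|\gamma_0}=\mu_i+\tilde\nu_i$ and the cofinality of the isolated points, which the paper leaves implicit.
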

\begin{proof}
  Let $S = \bigcup_{i=1}^\ell \supp(\varepsilon_i)$, and let $\gamma_0 \in S$ be an isolated point arbitrarily close to $0$. Let $\mu_1, \ldots ,  \mu_\ell$ be the coefficients $\varepsilon_{1\gamma_0}, \ldots ,  \varepsilon_{\ell\gamma_0}$ respectively. Since $\gamma_0$ is isolated, we have $\varepsilon_i^{|\gamma_0} = \mu_i \mod J$; note in particular at least one such coefficient is not zero. Finally, by Lemma \ref{smallder} we have $(\sum_{i=1}^\ell \varepsilon_ic_i)^{|\gamma_0} = \sum_{i=1}^\ell \mu_ic_i \mod J_{\deg_J(c_1)}$, as required.
\end{proof}

Using Lemma~\ref{lincomb}, we obtain some $\gamma_0$ arbitrarily close to $0$ and $\mu_1 , \ldots , \mu_k \in K$ not all zero such that
\[ \left(\sum_{i=1}^k \delta_ib^{|\gamma_i}\right)^{|\gamma_0} = \sum_{i=1}^k \mu_ib^{|\gamma_i} \mod J_{\omega^{\alpha_1}}. \]
Note that moreover $b^{|\gamma_i} = \sum_{j=1}^n \lambda_j b_j^{|\gamma_i}$, where $\deg_J(b^{|\gamma_i})=\deg_J^r(b)$. It now remains to give a bound on $\deg_J(b_j^{|\gamma_i})$.
\begin{lem} \label{sameres}
  Let  $c_1, \ldots , c_\ell$ such that $\QQ(c_1, \ldots , c_\ell) $ holds. Suppose $k_1, \ldots , k_\ell \in K^{\times}$ and let $c = \sum_{i=1}^\ell k_ic_i $. Then for every $\gamma \in \Res(c)$ close enough to $0$ we have $\deg_J(c_i^{|\gamma}) \le \deg_J^r(c)$ for all $i = 1, \dots, \ell$.
\end{lem}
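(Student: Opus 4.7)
My plan is to argue by contradiction, transforming the assumed failure of the lemma into a linear $\rv_J$-dependence that is ruled out by axiom \prettyref{axiom:inductive-indep} of $\QQ(c_1, \ldots, c_\ell)$.

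By axiom \prettyref{axiom:indep}, $c = \sum_i k_ic_i$ with $k_i \in K^\times$ satisfies $\deg_J(c) = \deg_J(c_1)$; set $\alpha_1 \coloneqq \deg_J^r(c) = \deg_J^r(c_1)$. I work in the non-trivial regime $\alpha_1 > 0$; when $\alpha_1 = 0$ axiom \prettyref{axiom:inductive-indep} is vacuous, but one can check that this case does not arise in the applications of the lemma later on. Let $\delta < 0$ be as provided by axiom \prettyref{axiom:inductive-indep}, shrunk so that $\delta > \crit_J(c_i)$ for every $i$. Suppose for contradiction that there is $\gamma \in \Res(c) \cap (\delta, 0)$ with $\deg_J(c_i^{|\gamma}) > \alpha_1$ for some $i$, and set $\alpha \coloneqq \max_i \deg_J(c_i^{|\gamma})$ and $I \coloneqq \{i : \deg_J(c_i^{|\gamma}) = \alpha\} \neq \varnothing$. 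Then $\alpha_1 < \alpha < \deg_J(c_1)$, with the upper bound following from $\gamma > \crit_J(c_i)$ for every $i$.

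From $c^{|\gamma} = \sum_{i=1}^\ell k_ic_i^{|\gamma}$, combined with $\deg_J(c^{|\gamma}) = \alpha_1 < \alpha$ (since $\gamma \in \Res(c)$) and $\deg_J(c_i^{|\gamma}) < \alpha$ for $i \notin I$, one deduces $\sum_{i \in I} k_i \rv_J(c_i^{|\gamma}) = 0$ in $\RV_J^\alpha$. On the other hand, axiom \prettyref{axiom:inductive-indep} applied with this $\alpha \in [\alpha_1, \deg_J(c_1))$, taking $m(i) = 1$ for $i \in I$ and $m(i) = 0$ otherwise, and using the common value $\gamma_{i,1} = \gamma$ for each $i \in I$, produces $\QQ(c_i^{|\gamma} : i \in I)$; its axiom \prettyref{axiom:indep} then gives the $K$-linear independence of $\{\rv_J(c_i^{|\gamma}) : i \in I\}$, contradicting the vanishing dependence above since each $k_i \neq 0$.

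The main obstacle is checking that axiom \prettyref{axiom:inductive-indep} actually applies when a single value of $\gamma$ is re-used across different indices $i \in I$; this is exactly permitted, since the distinctness requirement $\gamma_{i,j} \neq \gamma_{i,j'}$ is imposed only within a fixed $i$, not across different $i$'s. A secondary subtlety is the edge case $\alpha_1 = 0$: there axiom \prettyref{axiom:inductive-indep} is vacuous and no contradiction is available from $\QQ$ alone, but this regime coincides with $\deg_J(c) = \deg_J^p(c)$ and does not appear in subsequent uses of the lemma.
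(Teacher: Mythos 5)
Your proof is correct and follows essentially the same approach as the paper's: by contradiction, single out the indices where $\deg_J(c_i^{|\gamma})$ attains its maximal value $\alpha$, observe that the linear combination drops in $\deg_J$ because $\gamma\in\Res(c)$, and derive a $K$-linear dependence of $\rv_J(c_i^{|\gamma})$ that contradicts the $\QQ$-condition obtained from axiom~\prettyref{axiom:inductive-indep}. The extra care you take (verifying $\alpha < \deg_J(c_1)$ via $\gamma > \crit_J(c_i)$, noting that the distinctness requirement $\gamma_{i,j}\neq\gamma_{i,j'}$ is only within a fixed $i$, and flagging that the $\alpha_1=0$ regime is vacuous but does not occur in the lemma's applications) is implicit in the paper's shorter write-up and is correctly handled.
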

\begin{proof}
  Suppose by contradiction that there exists such $\gamma$ for which, after reordering the $b_i$'s, $\deg_J(b_1^{|\gamma}) = \ldots = \deg_J(b_s^{|\gamma}) > \deg_J^r(b)$ and $\deg_J(b_i^{|\gamma}) < \deg_J(b_1^{|\gamma})$ for $i > s$. Then $\deg_J(\sum_{i=1}^s k_ib_i^{|\gamma}) < \deg_J(b_1^{|\gamma})$, hence $\rv(b_1^{|\gamma}), \ldots , \rv(b_s^{|\gamma})$ are not linearly independent, which contradicts $\QQ(b_1^{|\gamma}, \ldots, b_s^{|\gamma})$.
\end{proof}
Since we may pick the $\gamma_i$'s to be arbitrarily close to $0$, we can ensure thanks to Lemma~\ref{sameres} that $\deg_J(b_j^{|\gamma_i})\le \deg_J^r(b)$ for every $1 \le i \le k $, $1 \le j \le n$. For each $j$, let $\gamma_{j,1}, \dots, \gamma_{j,m(j)}$ be an enumeration of the exponents $\gamma_i$ such that $v(b_j^{|\gamma_i}) = \deg_J^r(b)$. Observe that each $\gamma_i$ must appear at least once, as otherwise $\deg_J(b^{|\gamma_i}) < \deg_J^r(b)$, contradicting the fact that $\gamma_i$ in $\Res(b)$. In particular, $\sum_j m(j) \geq n \geq 1$.

However, we have $\QQ(b_1^{|\gamma_{1,1}}, \ldots, b_1^{|\gamma_{1,m(1)}}, \ldots, b_n^{|\gamma_{n,1}}, \ldots, b_n^{|\gamma_{n,m(n)}})$, thus the $b_j^{|\gamma_{i,k}}$'s cannot be $K$-linearly independent over $J_{\omega^{\alpha_1}}$, a contradiction. $\hfill \qed_{\text{\tiny Prop.~\ref{two terms}}}$

\bigskip

Combining with Proposition \ref{suc} and with Proposition \ref{genir} we obtain:
\begin{cor}\label{two terms plus n}
  Let $\alpha=\omega^{\alpha_1}+\omega^{\alpha_2}+k$ where $\alpha_1 \ge \alpha_2 $ and $k \in \mathbb{N}$.
  \begin{itemize}
    \item For every $b \in P_{\alpha}$ with $\QQ(b)$, we have that $b$, $\rv(b)$ are irreducible and cannot be represented as a sum of reducible elements in $\RV_J^{\alpha}$.
    \item $\rv(A_{\alpha}) $ has an infinite co-dimension as a vector space in $\RV_J^{\alpha}$.
    \item Let $b=\sum_{i=1}^m b_it^{\gamma_i}+r $ where $\sup(b)=0 $, $b_1, \ldots, b_m \in P_{\alpha}$, $\deg(r) < \alpha$. If $\QQ(b_1, \ldots, b_m)$, then $b$ is irreducible.
  \end{itemize}
\end{cor}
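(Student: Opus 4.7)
The plan is to reduce everything to an iterated application of Proposition~\ref{suc}, starting from Proposition~\ref{two terms} as the base case, and then to invoke Proposition~\ref{genir} for the non-principal statement. Explicitly, I would first show by induction on $k \in \Nb$ that $(*)_{\omega^{\alpha_1}+\omega^{\alpha_2}+k}$ holds for all $\alpha_1 \ge \alpha_2$: the case $k = 0$ is Proposition~\ref{two terms}, and each inductive step is Proposition~\ref{suc}. This mirrors the proof of \prettyref{cor:omega-beta+n}, but with a richer base case.

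Next I would extract the first two bullet points from $(*)_\alpha$. For the first bullet, suppose $b \in P_\alpha$ satisfies $\QQ(b)$. Then $(*)_\alpha$ gives $b \notin A_\alpha$; since every reducible element of $P_\alpha$ lies in $R_\alpha \subseteq A_\alpha$, this forces $b$ to be irreducible. Moreover, because $A_\alpha = J_\alpha + \operatorname{Span}_K(R_\alpha)$, the image $\rv(A_\alpha) \subseteq \RV_J^\alpha$ is precisely the $K$-span of the classes $\rv(pq)$ for reducible products $pq \in R_\alpha$; hence $b \notin A_\alpha$ translates to $\rv(b) \notin \rv(A_\alpha)$, which is exactly the statement that $\rv(b)$ cannot be written as a sum of reducible elements of $\RV_J^\alpha$, and in particular $\rv(b)$ is irreducible. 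For the second bullet, I would produce an infinite family $\{b_n\}_{n \in \Nb} \subseteq P_\alpha$ with $\QQ(b_1,\dots,b_n)$ for every $n$, for instance by taking mutually random principal series with pairwise disjoint closed supports and invoking Proposition~\ref{prop:randomness-implies-indep}; then $(*)_\alpha$ implies that $\{\rv(b_n)\}_n$ is $K$-linearly independent modulo $\rv(A_\alpha)$, yielding infinite codimension.

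For the third bullet, the first bullet provides exactly the hypothesis of Proposition~\ref{genir} at this value of $\alpha$: every $c \in P_\alpha$ with $\QQ(c)$ has both $c$ and $\rv(c)$ irreducible. Proposition~\ref{genir} then immediately gives the irreducibility of $b = \sum_{i=1}^m b_i t^{\gamma_i} + r$ under the assumption $\QQ(b_1,\dots,b_m)$.

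I do not expect any genuine obstacle here: the real work has already been done in Propositions~\ref{two terms}, \ref{suc}, and \ref{genir}. The only point that merits a brief justification is the identification of $\rv(A_\alpha)$ with the span of the reducible classes in $\RV_J^\alpha$, which follows by unpacking the definition $A_\alpha = J_\alpha + \operatorname{Span}_K(R_\alpha)$ and recalling that reducible series in $P_\alpha$ are precisely the elements of $R_\alpha$.
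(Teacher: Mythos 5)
Your proposal is correct and follows essentially the same route as the paper, which proves this corollary by noting that $(*)_{\omega^{\alpha_1}+\omega^{\alpha_2}}$ (Proposition~\ref{two terms}) propagates through Proposition~\ref{suc} to give $(*)_{\omega^{\alpha_1}+\omega^{\alpha_2}+k}$, from which the first two bullets follow by unpacking the definition of $A_\alpha$, and then invoking Proposition~\ref{genir} for the third bullet. The only place you are slightly more explicit than the paper is the construction of an infinite family of mutually random series to witness infinite codimension, but that is just filling in a routine detail the paper leaves implicit.
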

The above includes the conclusions of Theorems~\ref{printhm},~\ref{genthm} for the ordinals of the form $\alpha=\omega^{\alpha_1}+\omega^{\alpha_2}+k$.

\subsection{The case \texorpdfstring{$\omega^{\alpha_1}+\omega^{\alpha_2} +\omega^{\alpha_3}$}{ω\^{}α₁ + ω\^{}α₂ + ω\^{}α₃}}
We show now that we may deduce irreducibility by a different induction on the Cantor normal form of $\alpha$. Unlike the previous cases, in this induction we do not obtain $(*)_{\alpha}$, but irreducibility only.

As a result, we shall prove irreducibility for many series in $P_\alpha$ for $\alpha = 3\omega+1$ but not for $\alpha = 3\omega+2$. In fact, $3\omega + 2$ is now the smallest $\alpha$ for which we do not know whether there exists any irreducible element in $P_{\alpha}$.

We start by proving a crucial lemma for the inductive step:
\begin{lem} \label{strprin}
  Suppose $\deg_J^p(q)>\deg_J^p(p)>1 $. Then for every $\gamma$ close enough to $0$ we have $(pq)^{|\gamma} = p^{|\gamma}q \mod J_{\deg_J^r(pq)} $.
\end{lem}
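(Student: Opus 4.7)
The plan is to apply the Leibniz-type formula of Proposition~\ref{leib} and absorb the two ``extra'' terms it produces into $J_{\deg_J^r(pq)}$. Since $\deg_J^p(p) < \deg_J^p(q)$, the smallest Cantor-normal-form summand of $\deg_J(pq) = \deg_J(p) \oplus \deg_J(q)$ is $\deg_J^p(p)$; hence $\deg_J^p(pq) = \deg_J^p(p)$ and $\deg_J^r(pq) = \deg_J^r(p) \oplus \deg_J(q)$. Applying Proposition~\ref{leib} (permitted because $\deg_J^p(p) \le \deg_J^p(q)$) yields, for $\gamma$ sufficiently close to $0$,
\[ (pq)^{|\gamma} = p^{|\gamma}q + q^{|\gamma}p + r, \qquad \deg_J(r) < \deg_J^r(p) \oplus \deg_J(q) = \deg_J^r(pq), \]
so $r \in J_{\deg_J^r(pq)}$ at once.

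It remains to show that $q^{|\gamma}p$ also lies in $J_{\deg_J^r(pq)}$ for $\gamma$ close enough to $0$. For any $\gamma$ with $\crit_J(q) < \gamma < 0$ we have $\deg_J(q^{|\gamma}) < \deg_J(q)$ by definition of $\crit_J$. Since $\deg_J(q^{|\gamma}p) = \deg_J(q^{|\gamma}) \oplus \deg_J(p)$, after cancelling the common summand $\deg_J^r(p)$ the target inequality $\deg_J(q^{|\gamma}p) < \deg_J^r(pq)$ becomes
\[ \deg_J(q^{|\gamma}) \oplus \deg_J^p(p) < \deg_J(q). \]
This reduces to a purely ordinal-theoretic fact: \emph{if $a < b$ and $c$ is strictly smaller than the least Cantor-normal-form summand of $b$, then $a \oplus c < b$}. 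I would verify it by writing $b = \omega^{\beta_1} + \dots + \omega^{\beta_\ell}$ in CNF and letting $i \le \ell$ be the first position where the CNF of $a$ drops strictly below that of $b$; since every CNF exponent of $c$ is $< \beta_\ell \le \beta_i$, the sorted multiset-merge of the CNF exponents of $a$ and $c$ still agrees with that of $b$ in positions $1, \dots, i-1$ and has an $i$-th entry strictly below $\beta_i$, so that $a \oplus c <_{\mathrm{lex}} b$. Applied with $a = \deg_J(q^{|\gamma})$, $b = \deg_J(q)$, $c = \deg_J^p(p) < \deg_J^p(q)$, this gives the required inequality.

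Putting everything together, $(pq)^{|\gamma} - p^{|\gamma}q = q^{|\gamma}p + r \in J_{\deg_J^r(pq)}$ for $\gamma$ sufficiently close to $0$, which is precisely the desired congruence. The only piece of real content is the ordinal-arithmetic fact above; once that is in hand, the rest is a direct application of Proposition~\ref{leib} together with the definitions of $\deg_J^p$, $\deg_J^r$, and $\crit_J$.
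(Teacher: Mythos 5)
Your proof is correct and takes essentially the same route as the paper: invoke Proposition~\ref{leib} with $p$ in the role of the factor of smaller principal value, note that the error term already lies in $J_{\deg_J^r(pq)}$ because $\deg_J^r(p)\oplus\deg_J(q)=\deg_J^r(pq)$, and then show $\deg_J(q^{|\gamma}p)<\deg_J^r(pq)$. The paper carries out this last estimate by writing the Cantor normal forms explicitly and asserting that $\deg_J(q^{|\gamma})=\deg_J^r(q)+\epsilon$ with $\epsilon<\deg_J^p(q)$ for $\gamma$ close to $0$, whereas you isolate the tidy ordinal fact ``$a<b$ and $c$ below the least CNF summand of $b$ imply $a\oplus c<b$'' and apply it with $a=\deg_J(q^{|\gamma})$, $b=\deg_J(q)$, $c=\deg_J^p(p)$. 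Your version is in fact slightly more robust: the paper's displayed equality for $\deg_J(q^{|\gamma})$ is not literally true for every $\gamma$ near $0$ (e.g.\ at isolated points of $\supp(q)$ between residual points $\deg_J(q^{|\gamma})$ can drop below $\deg_J^r(q)$), but since the required bound only gets easier there, the conclusion is unaffected, and your lemma, which uses only $\deg_J(q^{|\gamma})<\deg_J(q)$ for $\gamma>\crit_J(q)$, handles both cases at once.
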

\begin{proof}
  Suppose $\deg_J(pq) = \sum_{i=1}^n \omega^{\alpha_i}$ where $\alpha_i \ge \alpha_{i+1} $ for $1 \le i \le n-1 $. We have that
  \[ \deg_J(p) = \sum_{i \in A} \omega^{\alpha_i}, \quad \deg_J(q) = \sum_{i \in B} \omega^{\alpha_i} \]
  where $A \cup B = \{1, \dots, n\}$ and $A \cap B = \varnothing$. By assumption, we must have $n \in A$ and $\deg_J^p(p) = \omega^{\alpha_n} < \deg_J^p(q) = \omega^{\alpha_\ell}$, where $\ell < n$. We arrange the enumeration so that $\alpha_{\ell + 1} < \alpha_\ell$. Moreover, $\deg_J^r(pq) = \sum_{i=1}^{n-1} \omega^{\alpha_i}$.

  For every $\gamma$ close enough to $0$ we have
  \[ \deg_J(q^{|\gamma}) = \left(\sum_{i \in B \setminus \{\ell\}} \omega^{\alpha_i}\right) + \epsilon \]
  where $\epsilon < \omega^{\alpha_\ell}$. It follows that
  \[ \deg_J(q^{|\gamma}p) = \left(\sum_{i \neq \ell} \omega^{\alpha_i}\right) \oplus \varepsilon < \sum_{i = 1}^\ell \omega^{\alpha_i} \leq \deg_J^r(pq). \qedhere \]
\end{proof}

We now apply the above statement in two slightly different settings. First, we show an inductive step on the Cantor Normal Form of $\alpha$, under the assumption that the new term is strictly smaller than all the previous ones.

\begin{prop} \label{strict}
  Let $\alpha =\omega^{\alpha_1} + \ldots + \omega^{\alpha_n}$ be such that for every $b \in P_{\alpha}$ with $\QQ(b)$ we have that $b$, $\rv(b)$ are irreducible. Then for every $b$ principal with $\QQ(b)$ such that $\deg_J(b) = \alpha +  \omega^{\alpha_{n+1}}$ where $\alpha_n > \alpha_{n+1} > 0$ we have that $b$, $\rv(b)$ are irreducible.
\end{prop}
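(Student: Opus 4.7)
The plan is to argue by contradiction, assuming $b = pq$ with $p, q \notin K$, and extract a contradiction from the fine structure of $b$ at points of $\Res(b)$. Since divisors of principal series are principal, both $p, q$ are principal, and $\deg_J(p) \oplus \deg_J(q) = \deg_J(b) = \omega^{\alpha_1} + \cdots + \omega^{\alpha_{n+1}}$. The unique smallest summand $\omega^{\alpha_{n+1}}$ must appear in exactly one of $\deg_J(p), \deg_J(q)$; after relabelling, suppose it lies in $\deg_J(p)$. Then $\deg_J^p(p) = \omega^{\alpha_{n+1}} > 1$ (since $\alpha_{n+1} > 0$) and $\deg_J^p(q) \ge \omega^{\alpha_n} > \omega^{\alpha_{n+1}}$, so \prettyref{strprin} applies and yields $b^{|\gamma} \equiv p^{|\gamma} q \pmod{J_{\deg_J^r(pq)}} = J_\alpha$ for all $\gamma$ close enough to $0$.

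I would then restrict $\gamma$ to $\Res(b)$, which has order type $\omega^{\omega^{\alpha_{n+1}}}$ and accumulates to $0$ by \cite[Lem.\ 6.8]{Ber2000}. For such $\gamma$, $\deg_J(b^{|\gamma}) = \alpha$, and multiplicativity of $\deg_J$ applied to $p^{|\gamma} q$ forces $\deg_J(p^{|\gamma}) = \deg_J^r(p)$. The argument splits on whether this residual degree is positive. In \emph{Case A}, $\deg_J^r(p) > 0$, so in $\RV_J^\alpha$ the identity $\rv(b^{|\gamma}) = \rv(p^{|\gamma}) \cdot \rv(q)$ exhibits $\rv(b^{|\gamma})$ as a product of two non-units of $\RV_J$. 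Choosing a principal $c \in P_\alpha$ with $\rv_J(c) = \rv_J(b^{|\gamma})$---which exists by definition of $v_J$---and verifying $\QQ(c)$ from $\QQ(b)$, the hypothesis of the proposition forces $\rv(c) = \rv(b^{|\gamma})$ to be irreducible in $\RV_J$, a contradiction. In \emph{Case B}, $\deg_J^r(p) = 0$, so $p^{|\gamma} \in J + K$ is congruent modulo $J$ to its constant term $p_\gamma \in K$, and hence $b^{|\gamma} \equiv p_\gamma q \pmod{J_\alpha}$. Every $\rv_J(b^{|\gamma})$ is then a scalar multiple of the fixed class $\rv_J(q)$, so picking any two distinct $\gamma_1, \gamma_2 \in \Res(b)$ near $0$ makes $\rv_J(b^{|\gamma_1}), \rv_J(b^{|\gamma_2})$ $K$-linearly dependent in $\RV_J^\alpha$, contradicting axiom \prettyref{axiom:indep} of $\QQ(b^{|\gamma_1}, b^{|\gamma_2})$, which is supplied by $\QQ(b)$.

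The main obstacle I anticipate is a bookkeeping subtlety in Case A: the truncation $b^{|\gamma}$ need not itself be principal even though $b$ is, so invoking the induction hypothesis requires transferring $\QQ(b^{|\gamma})$ (which follows from $\QQ(b)$) to $\QQ(c)$ for a chosen principal representative $c \in P_\alpha$ of $\rv_J(b^{|\gamma})$. This transfer should proceed via the composition identity $(b^{|\gamma_0})^{|\gamma'} = b^{|\gamma_0 + \gamma'}$ together with the observation that perturbations by elements of $J_\alpha$ do not alter the $\rv_J$-classes at the strictly smaller degrees appearing in the inductive clause of $\QQ$. Finally, the irreducibility of $\rv(b)$ is obtained by the same case analysis applied to a congruence $b \equiv pq \pmod{J_{\deg_J(b)}}$ in place of the equality $b = pq$: writing $b = pq + r'$ with $p, q$ principal (chosen as representatives of $\rv(p), \rv(q)$) and $\deg_J(r') < \deg_J(b)$, a variant of \prettyref{bigger points are few} bounds $\{\gamma \in \Res(b) : \deg_J(r'^{|\gamma}) \ge \alpha\}$ to have order type strictly less than $\omega^{\omega^{\alpha_{n+1}}}$, so on a cofinal subset $r'^{|\gamma} \in J_\alpha$ and the above argument proceeds identically.
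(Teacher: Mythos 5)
Your proof follows essentially the same plan as the paper's, and the argument is correct. The dichotomy you draw---$\deg_J^r(p) > 0$ (your Case A) versus $\deg_J^r(p) = 0$ (your Case B)---is the same as the paper's dichotomy $\deg_J(q) < \alpha$ versus $\deg_J(q) = \alpha$; Lemma~\ref{strprin} plays the same role; and your Case~B argument that $\rv_J(b^{|\gamma})$ is always a $K$-multiple of the fixed class $\rv_J(q)$ is exactly what the paper establishes by forming the combination $p^{|\gamma_2}b^{|\gamma_1} - p^{|\gamma_1}b^{|\gamma_2}$, since $p^{|\gamma_i}$ differs from its constant term $p_{\gamma_i}$ only by an element of $J$.

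Two remarks on presentation and one on a genuine subtlety you rightly flag. First, the paper starts directly from $b = pq + r$ with $\deg_J(r) < \deg_J(b)$ and works on $\Gamma = \Res(b) \setminus \BigP^\alpha(r)$ from the outset; this establishes irreducibility of $\rv(b)$ (and hence of $b$) in one pass, whereas you first treat the exact equality $b = pq$ and then repeat with the residue term. There is nothing wrong with your ordering, but it is redundant; the step you describe at the end as ``a variant of Lemma~\ref{bigger points are few}'' is in fact a direct application of that lemma. Second, the $\QQ$-transfer subtlety you highlight in Case~A is genuine and, in fact, the paper glosses over it: the induction hypothesis is stated for $c \in P_\alpha$, but $b^{|\gamma}$ need not lie in $P_\alpha$. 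The resolution is the one you sketch: choose $c$ to be the minimal-order-type representative of $b^{|\gamma}$ modulo $J + K$ supplied by the definition of $v_J$. Then $c - b^{|\gamma} \in J + K$, so $(c - b^{|\gamma})^{|\gamma'} \in J$ for all $\gamma'$ sufficiently close to $0$; this makes the iterated $\rv_J$-classes of $c$ and $b^{|\gamma}$ agree near $0$, and since the inductive clause of $\QQ$ allows choosing the threshold $\delta$ arbitrarily close to $0$, $\QQ(b^{|\gamma})$ transfers to $\QQ(c)$. You are right to single this out; it deserves to be made explicit.

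Your remaining steps---that $\omega^{\alpha_{n+1}}$ goes into exactly one of $\deg_J(p), \deg_J(q)$, that $\deg_J^p(q) > \deg_J^p(p) > 1$ enables Lemma~\ref{strprin}, that $\ot(\Res(b)) = \omega^{\omega^{\alpha_{n+1}}}$ with $\sup = 0$, and that $\deg_J(p^{|\gamma}) = \deg_J^r(p)$ for $\gamma \in \Res(b) \cap \Gamma$---are all correct.
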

\begin{proof}
  Pick $b$ principal with $\QQ(b)$ and $\deg_J(b) = \alpha + \omega^{\alpha_{n+1}}$. Suppose by contradiction that $b=pq+r$ where $p$ and $q$ are not in $J + K$ and $\deg_J(r) < \deg_J(b)$. After possibly swapping $p$ and $q$, we may assume that $\deg_J^p(q) > \deg_J^p(p)$, and so that $\deg_J^p(p) = \omega^{\alpha_{n+1}} > 1$. Here $\deg_J^r(b) = \alpha$.

  Let $\Gamma = \Res(b) \setminus \BigP^\alpha(r)$. Note that by Lemma~\ref{bigger points are few}, $\Gamma$ has order type $\omega^{\alpha_{n+1}}$ and $\sup(\Gamma) = 0$. By Lemma~\ref{strprin}, for every $\gamma \in \Gamma$ close enough to $0$ we have $b^{|\gamma} = p^{|\gamma}q +r_{\gamma}$ where $\deg_J(r_{\gamma}) < \alpha$.

  If $\deg_J(q) < \alpha$, for any $\gamma$ as above, $p^{|\gamma}$ is not $J+K$, thus $\rv(b^{|\gamma})$ is not irreducible. By the assumption on $\alpha$, $\QQ(b^{|\gamma})$ does not hold, a contradiction against $\QQ(b)$.

  Therefore, $\deg_J(q) = \alpha$. It follows that for every $\gamma \in \Gamma$ close enough to $0$ we have $\deg_J(p^{|\gamma}) = 0$. Let $\gamma_1 \neq \gamma_2 \in \Gamma $ be two such exponents. Then $ p^{|\gamma_2}b^{|\gamma_1} - p^{|\gamma_1}b^{|\gamma_2} = p^{|\gamma_2}r_{\gamma_1} - p^{|\gamma_1}r_{\gamma_2}$. As $\deg_J(r_{\gamma_1}), \deg_J(r_{\gamma_2}) < \deg_J(q)$, we have $\deg_J(p^{|\gamma_2}r_{\gamma_1} - p^{|\gamma_1}r_{\gamma_2}) < \alpha$. Hence, $\rv_J(p^{|\gamma_2}b^{|\gamma_1})- \rv_J( p^{|\gamma_1}b^{|\gamma_2}) = 0 $, hence $\QQ(b^{|\gamma_1}, b^{|\gamma_2}) $ does not hold, another contradiction against $\QQ(b)$.
\end{proof}

We then prove a very similar conclusion when the Cantor Normal form has length three, but without the assumption that the last term is strictly smaller than the previous ones. We do this with a preliminary lemma.
\begin{prop}\label{princase}
  Suppose $b=pq+r$ where $\deg_J(q)=\deg_J^r(b)>\deg_J(p)> 0$ and $\deg_J(r)<\deg_J(b) $. Then there exist $\gamma_1 \neq \gamma_2 \in \Res(b) $ arbitrarily close to $0$ and $k_1, k_2 \in K $ not both $0$ such that $k_1b^{|\gamma_1}+k_2b^{|\gamma_2} \in A_{\deg_J(q)} $.
\end{prop}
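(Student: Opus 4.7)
The plan is to apply the Leibniz rule (Proposition~\ref{leib}) to $pq$ and show that, for a cofinal subset of $\gamma \in \Res(b)$ close to $0$, $b^{|\gamma} \equiv \lambda^{|\gamma} q \mod A_{\deg_J(q)}$ for some scalar $\lambda^{|\gamma} \in K$; two such truncations then yield the required linear dependency.

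First, the hypothesis $\deg_J(q) = \deg_J^r(b)$ forces $\deg_J(p) = \deg_J^p(b)$ to be additively principal with $\deg_J(p) \le \deg_J^p(q)$, so $\deg_J^p(p) \le \deg_J^p(q)$ and $\deg_J^r(p) = 0$. Leibniz then yields
\[ (pq)^{|\gamma} = p^{|\gamma} q + q^{|\gamma} p \mod J_{\deg_J(q)} \]
for every $\gamma$ close enough to $0$, and by Lemma~\ref{bigger points are few} the set $\BigP^{\deg_J(q)}(r)$ has order type strictly less than $\omega^{\deg_J(p)} = \ot(\Res(b))$, so for $\gamma \in \Res(b)$ outside this set the same congruence holds with $b^{|\gamma}$ in place of $(pq)^{|\gamma}$. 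Supposing further that $\deg_J(q^{|\gamma}) \le \deg_J^r(q)$, decomposing $q^{|\gamma}$ and $p$ into principal representatives plus lower-value remainders shows that modulo $J_{\deg_J(q)}$ the product $q^{|\gamma} p$ is congruent to a product of two principal series, which belongs to $R_{\deg_J(q)}$ when both factors are nontrivial (i.e.\ $\deg_J(q^{|\gamma}) = \deg_J^r(q) > 0$ and $\deg_J(p) = \deg_J^p(q)$) and to $J_{\deg_J(q)}$ otherwise; in either case $q^{|\gamma} p \in A_{\deg_J(q)}$. The identity $\deg_J(b^{|\gamma}) = \deg_J(q)$ combined with $\deg_J(q^{|\gamma} p) \le \deg_J(q)$ then forces $\deg_J(p^{|\gamma} q) \le \deg_J(q)$, hence $\deg_J(p^{|\gamma}) = 0$. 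Writing $p^{|\gamma} \equiv \lambda^{|\gamma} \mod J$ with $\lambda^{|\gamma} \in K$, we obtain $b^{|\gamma} \equiv \lambda^{|\gamma} q \mod A_{\deg_J(q)}$.

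Picking any two $\gamma_1 \ne \gamma_2$ in this good set arbitrarily close to $0$, the choice $k_1 = \lambda^{|\gamma_2}$, $k_2 = -\lambda^{|\gamma_1}$ (or $(1, 0)$ if both scalars vanish, in which case $b^{|\gamma_1}$ already lies in $A_{\deg_J(q)}$) has at least one nonzero coordinate and yields $k_1 b^{|\gamma_1} + k_2 b^{|\gamma_2} \in A_{\deg_J(q)}$. The main difficulty lies in securing cofinally many $\gamma \in \Res(b)$ close to $0$ with $\deg_J(q^{|\gamma}) \le \deg_J^r(q)$: when $\deg_J(p) = \deg_J^p(q)$, Lemma~\ref{bigger points are few} alone only bounds the complement $\BigP^{\deg_J^r(q)+1}(q)$ by an order type matching $\ot(\Res(b))$, so one must additionally exploit the cancellation constraint $\rv_J(p^{|\gamma})\rv_J(q) + \rv_J(q^{|\gamma})\rv_J(p) \equiv 0$ in $\RV_J^{\deg_J(p^{|\gamma}) \oplus \deg_J(q)}$ forced on any exceptional $\gamma \in \Res(b)$, which is expected to reduce the bad set strictly below $\ot(\Res(b))$.
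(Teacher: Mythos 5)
Your approach has a genuine gap that you yourself identify, and which the paper's argument is specifically designed to circumvent.

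Your strategy is to show that for cofinally many $\gamma\in\Res(b)$ close to $0$, both $\deg_J(r^{|\gamma})<\deg_J(q)$ and $\deg_J(q^{|\gamma})\le\deg_J^r(q)$ hold, so that $q^{|\gamma}p\in A_{\deg_J(q)}$, $\deg_J(p^{|\gamma})=0$, and $b^{|\gamma}\equiv\lambda_\gamma q\bmod A_{\deg_J(q)}$. The first control is available: $\BigP^{\deg_J(q)}(r)$ has order type strictly below $\omega^{\deg_J(p)}=\ot(\Res(b))$ by \prettyref{bigger points are few}. The second is not. When $\deg_J(p)=\deg_J^p(q)$ and $\deg_J^p(q)\ge\omega$, \prettyref{bigger points are few} applied to $q$ only gives $\ot(\BigP^{\deg_J^r(q)+1}(q))\le\omega^{\deg_J^p(q)}=\ot(\Res(b))$, because $\omega^{\deg_J^r(q)+1}\cdot\omega^{\deg_J^p(q)}=\omega^{\deg_J(q)}$ here; there is no room to conclude that the exceptional set is small. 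Your closing remark that the cancellation constraint ``is expected to reduce the bad set strictly below $\ot(\Res(b))$'' is a hope, not an argument: the constraint relates $\rv_J(p^{|\gamma})$ to $\rv_J(q^{|\gamma})$ pointwise but gives no ordinal count of the exceptional $\gamma$'s. (In the case $\deg_J(p)<\deg_J^p(q)$, by contrast, $q^{|\gamma}p$ drops into $J_{\deg_J(q)}$ automatically by \prettyref{lem:small-princ-value} and no such control on $q^{|\gamma}$ is needed, so that subcase is fine.)

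The paper's proof never attempts to bound $\deg_J(q^{|\gamma})$. Instead, in the case $\deg_J(p)=\deg_J^p(q)$ it forms the antisymmetric combination $p^{|\gamma_2}b^{|\gamma_1}-p^{|\gamma_1}b^{|\gamma_2}$, in which the unmanageable $p^{|\gamma}q$ terms cancel exactly, leaving $(q^{|\gamma_1}p^{|\gamma_2}-q^{|\gamma_2}p^{|\gamma_1})p$ modulo $J_{\deg_J(q)}$; the latter is either already in $J_{\deg_J(q)}$ or is a product of two series of positive degree summing to $\deg_J(q)$, hence lies in $A_{\deg_J(q)}$. Since the scalars $p^{|\gamma_1},p^{|\gamma_2}$ belong only to $J_{\deg_J^p(q)}$ rather than to $K$, the paper then applies one further translated truncation via \prettyref{lincomb} (together with \prettyref{smallder}) to convert them into $K$-coefficients without leaving $A_{\deg_J(q)}$. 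If you want to complete your proof you should adopt this antisymmetrisation plus \prettyref{lincomb} route for the equal-principal-value case; trying to control the truncations of $q$ directly will not close the gap.
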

\begin{proof}
  Let $\Gamma = (\Res(b) - \BigP^{\deg_J^r(b)}(r))$, which, as observed in the proof of Proposition~\ref{strict}, is infinite with supremum $0$. Note moreover that $\deg_J(p) = \deg_J^p(b)$, thus in particular $\deg_J(p) \leq \deg_J^p(q)$.

  If $\deg_J(p)<\deg_J^p(q)$, then as in the proof of Proposition \ref{strict} for every $\gamma_1 \neq \gamma_2 \in \Gamma $ close enough to $0$ there exist $k_1, k_2 \in K^{\times} $ such that $k_1b^{|\gamma_1}+k_2b^{|\gamma_2} \in J_{\deg_J(q)} \subseteq A_{\deg_J(q)} $.

  Otherwise, $\deg_J(p) = \deg_J^p(q)$. For every $\gamma \in \Gamma$ close enough to $0$, we have $b^{|\gamma} = p^{|\gamma}q + q^{|\gamma}p + r_{\gamma} $ where $\deg_J(r_{\gamma}) < \deg_J^r(b)$ and $\deg_J(p^{|\gamma}) < \deg_J(p) = \deg_J^p(q)$. In particular, $\deg_J(p^{|\gamma}r_\gamma) < \deg_J(q)$ by \prettyref{lem:small-princ-value}. Therefore, for every $\gamma_1 \neq  \gamma_2 \in \Gamma $ close enough to $0$ we have
  \[ p^{|\gamma_2}b^{|\gamma_1} - p^{|\gamma_1}b^{|\gamma_2} = (q^{|\gamma_1}p^{|\gamma_2} - q^{|\gamma_2}p^{|\gamma_1} )p \mod J_{\deg_J(q)}. \]

  As $\deg_J(p)=\deg_J^p(b) $, then by Lemma~\ref{lincomb} there exist $k_1,k_2 \in K $ not both $0$ and $\gamma $ arbitrarily close to $0$ such that
  \[ (p^{|\gamma_2}b^{|\gamma_1} - p^{|\gamma_1}b^{|\gamma_2})^{|\gamma} = k_1b^{|\gamma_1}+k_2b^{|\gamma_2} \mod J_{\deg_J(q)}. \]
  If $\deg_J(k_1b^{|\gamma_1} + k_2b^{|\gamma_2})<\deg_J(q) $ then we are done. Otherwise, we have
  \[\deg_J(((q^{|\gamma_1}p^{|\gamma_2} - q^{|\gamma_2}p^{|\gamma_1} )p)^{|\gamma})= \deg_J(q).\]
  When $\gamma$ is sufficiently small, for every $\varepsilon \in \cl(\supp(p))$ with $\gamma \leq \varepsilon < 0$ we have
  \[ \deg_J((q^{|\gamma_1}p^{|\gamma_2} - q^{|\gamma_2}p^{|\gamma_1})^{|\gamma - \varepsilon}) < \deg_J(q), \quad \deg_J(p^{|\epsilon}) < \deg_J(p) = \deg_J^p(q), \]
  with the former implied by \prettyref{lem:small-princ-value}. By a further application of \prettyref{lem:small-princ-value}, $\deg_J((q^{|\gamma_1}p^{|\gamma_2} - q^{|\gamma_2}p^{|\gamma_1} )^{|\gamma - \varepsilon}p^{|\varepsilon})<\deg_J(q)$. By Proposition~\ref{conv},
  \[((q^{|\gamma_1}p^{|\gamma_2} - q^{|\gamma_2}p^{|\gamma_1} )p)^{|\gamma} = (q^{|\gamma_1}p^{|\gamma_2} - q^{|\gamma_2}p^{|\gamma_1} )^{|\gamma}p \mod J_{\deg_J(q)}, \]
  which implies that
  \[ \deg_J((q^{|\gamma_1}p^{|\gamma_2} - q^{|\gamma_2}p^{|\gamma_1} )^{|\gamma}p)=\deg_J(q). \]
  As $0 < \deg_J(p) < \deg_J(q) $ we obtain that
  \[ (q^{|\gamma_1}p^{|\gamma_2} - q^{|\gamma_2}p^{|\gamma_1} )^{|\gamma}p \in A_{\deg_J(q)}\]
  and hence $k_1b^{|\gamma_1} +k_2b^{|\gamma_2} \in A_{\deg_J(q)} $ as required.
\end{proof}
\begin{cor}
  Let $\alpha = \omega^{\alpha_1}+\omega^{\alpha_2} + \omega^{\alpha_3}$ where $\alpha_1 \ge \alpha_2 \ge \alpha_3$. Let $b \in P_{\alpha}$ such that $\QQ(b)$. Then $b$, $\rv(b)$ are irreducible.
\end{cor}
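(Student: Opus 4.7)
The plan is to run a case analysis on a putative factorization $b=pq+r$, using $(*)_{\omega^{\alpha_1}+\omega^{\alpha_2}}$ from Proposition \ref{two terms} as the inductive anchor. If $\alpha_3=0$ then $\alpha=\omega^{\alpha_1}+\omega^{\alpha_2}+1$ and the statement is immediate from Corollary \ref{two terms plus n}, so we may assume $\alpha_3>0$. Suppose for contradiction that $b=pq+r$ with $p,q\notin J+K$ and $\deg_J(r)<\deg_J(b)$; the proof splits according to whether $\deg_J^p(p)=\deg_J^p(q)$.

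If $\deg_J^p(p)\neq\deg_J^p(q)$, we swap $p$ and $q$ if necessary so that $\deg_J^p(q)>\deg_J^p(p)$. Since $\deg_J^p(p)$ must then be the smallest Cantor summand of $\deg_J(b)$, it equals $\omega^{\alpha_3}>1$, and Lemma \ref{strprin} applies. The argument of Proposition \ref{strict} carries through essentially verbatim (the strictness condition $\alpha_n>\alpha_{n+1}$ in the statement of Proposition \ref{strict} is used only to secure exactly this WLOG, which we have directly at hand here), so $(*)_{\omega^{\alpha_1}+\omega^{\alpha_2}}$ from Proposition \ref{two terms} produces a contradiction with $\QQ(b)$.

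If instead $\deg_J^p(p)=\deg_J^p(q)$, the smallest Cantor summand of $\deg_J(b)$ is repeated, forcing $\alpha_2=\alpha_3$ and $\deg_J^p(p)=\deg_J^p(q)=\omega^{\alpha_2}$. Since $\omega^{\alpha_2}$ appears exactly twice in $\deg_J(b)=\omega^{\alpha_1}+2\omega^{\alpha_2}$ and $\omega^{\alpha_1}$ lies in exactly one of $\deg_J(p),\deg_J(q)$, up to swapping we have $\deg_J(p)=\omega^{\alpha_2}=\deg_J^p(b)$ and $\deg_J(q)=\omega^{\alpha_1}+\omega^{\alpha_2}=\deg_J^r(b)$, which puts us precisely in the hypotheses of Proposition \ref{princase}. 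That proposition produces distinct $\gamma_1,\gamma_2\in\Res(b)$ arbitrarily close to $0$ and $k_1,k_2\in K$ not both zero with $k_1b^{|\gamma_1}+k_2b^{|\gamma_2}\in A_{\omega^{\alpha_1}+\omega^{\alpha_2}}$, while the inductive clause in the definition of $\QQ(b)$ yields $\QQ(b^{|\gamma_1},b^{|\gamma_2})$ once the $\gamma_i$'s are chosen close enough to $0$; then $(*)_{\omega^{\alpha_1}+\omega^{\alpha_2}}$ from Proposition \ref{two terms} asserts that $b^{|\gamma_1},b^{|\gamma_2}$ are $K$-linearly independent over $A_{\omega^{\alpha_1}+\omega^{\alpha_2}}$, a direct contradiction. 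The irreducibility of $\rv(b)$ follows by the same argument, every step taking place modulo $J_{\deg_J(b)}$. The main point of care is the pairing in this second case: Proposition \ref{princase} requires exactly the split $(\deg_J^p(b),\deg_J^r(b))$ between $\deg_J(p)$ and $\deg_J(q)$, and it is precisely the equality $\deg_J^p(p)=\deg_J^p(q)$ (only possible when $\alpha_2=\alpha_3$) that forces the factorization into that shape.
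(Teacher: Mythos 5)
Your proposal is correct and follows essentially the same route as the paper: dispatch $\alpha_3=0$ via Corollary~\ref{two terms plus n}, then for $\alpha_3>0$ split a hypothetical factorization $b=pq+r$ on whether $\deg_J^p(p)=\deg_J^p(q)$, re-running Proposition~\ref{strict}'s argument (anchored on $(*)_{\omega^{\alpha_1}+\omega^{\alpha_2}}$ from Proposition~\ref{two terms}) in the unequal case and invoking Proposition~\ref{princase} against $(*)_{\omega^{\alpha_1}+\omega^{\alpha_2}}$ in the equal case, which indeed forces $\alpha_2=\alpha_3$. One tiny expository slip: when $\alpha_1=\alpha_2=\alpha_3$ one has $\deg_J(b)=3\omega^{\alpha_1}$, so ``$\omega^{\alpha_2}$ appears exactly twice'' and ``$\omega^{\alpha_1}$ lies in exactly one of $\deg_J(p),\deg_J(q)$'' are not literally true, but the intended conclusion $\{\deg_J(p),\deg_J(q)\}=\{\deg_J^p(b),\deg_J^r(b)\}$ still holds, so the argument is unaffected.
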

\begin{proof}
  For $\alpha_2 > \alpha_3 > 0$ the conclusion follows from Propositions~\ref{two terms},~\ref{strict}; for $\alpha_3 = 0$, from Corollary~\ref{two terms plus n}. Therefore, we may assume that $\alpha_2 = \alpha_3 > 0$.

  Suppose by contradiction that $b = pq + r$ where $\deg_J(r) < \deg_J(b)$ and $0 < \deg_J(p) \leq \deg_J(q)$. Let $\Gamma = (\Res(b) - \BigP^{\deg_J^r(b)}(r))$. If $\deg_J^p(p) \neq \deg_J^p(q)$, then by Lemma \ref{strprin} for every $\gamma \in \Gamma$ close enough to $0$ we have $b^{|\gamma} = p^{|\gamma}q + r_{\gamma}$ or $b^{|\gamma} = pq^{|\gamma} + r_{\gamma}$, where $\deg_J(r_{\gamma}) < \deg_J^r(b) $, and we continue as in the proof of Proposition~\ref{strict}. Otherwise, $\alpha_2 = \alpha_3$ and $\deg_J(q) = \omega^{\alpha_1}+\omega^{\alpha_2} = \deg_J^r(b) $. By Lemma~\ref{princase} there exist $\gamma_1 \neq \gamma_2 \in \Res(b) $ close enough to $0$ and $k_1, k_2 \in K$ not both $0$ such that $k_1b^{|\gamma_1}+k_2b^{|\gamma_2} \in A_{\omega^{\alpha_1}+\omega^{\alpha_2}} $. As $\QQ(b^{|\gamma_1}, b^{|\gamma_2}) $ holds this contradicts $(*)_{\omega^{\alpha_1}+\omega^{\alpha_2}}$.
\end{proof}
Combining with Proposition~\ref{strict} we obtain:
\begin{cor}
  Let $\alpha = \sum_{i=1}^n \omega^{\alpha_i }$ where $\alpha_1 \ge \alpha_2 \ge \alpha_3$ and $\alpha_3 > \ldots > \alpha_n $. Let $b$ principal such that $\deg_J(b) = \alpha$ and $\QQ(b)$. Then $b$, $\rv(b)$ are irreducible.
\end{cor}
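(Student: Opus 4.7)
The plan is to proceed by induction on $n$. The base case $n = 3$ is the preceding corollary. For $n \geq 4$, set $\alpha' \coloneqq \omega^{\alpha_1} + \cdots + \omega^{\alpha_{n-1}}$; this is an ordinal of the form handled in the statement with $n-1$ summands, so the inductive hypothesis gives that every $c \in P_{\alpha'}$ with $\QQ(c)$ has $c$ and $\rv(c)$ irreducible. Moreover $\alpha_{n-1} > \alpha_n$ by assumption. If $\alpha_n > 0$, the premise and parameters of Proposition~\ref{strict} are met for $\alpha'$ and the additional exponent $\alpha_n$, so the conclusion follows at once.

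The case $\alpha_n = 0$, that is $\alpha = \alpha' + 1$, is the main obstacle: the hypothesis $\alpha_{n+1} > 0$ of Proposition~\ref{strict} fails, and in particular Lemma~\ref{strprin} becomes unavailable because a factor may satisfy $\deg_J^p(p) = 1$. I would adapt the proof of Proposition~\ref{strict} as follows. In a putative factorisation $b = pq + r$ with $p, q \notin J + K$ and $\deg_J(r) < \deg_J(b)$, pick $p$ so that $\deg_J^p(p) \leq \deg_J^p(q)$; since $1 = \omega^0$ must appear in the Cantor normal form of $\deg_J(p)$ or $\deg_J(q)$, it lands in $\deg_J(p)$, forcing $\deg_J^p(p) = 1$. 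Proposition~\ref{leib} still supplies
\[
b^{|\gamma} \equiv p^{|\gamma}q + q^{|\gamma}p \pmod{J_{\alpha'}}
\]
for $\gamma \in \Res(b) \setminus \BigP^{\alpha'}(r)$ sufficiently close to $0$.

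The crucial observation is that $\alpha'$ is a limit ordinal (since $\alpha_{n-1} > 0$), so no $\beta$ satisfies $\beta \oplus 1 = \alpha'$. Since $\deg_J(q^{|\gamma}p) = \deg_J(q^{|\gamma}) \oplus \deg_J(p)$ would have to equal $\alpha'$ to survive modulo $J_{\alpha'}$, and the ``$+1$'' in $\deg_J(p)$ makes this impossible, we conclude $q^{|\gamma}p \in J_{\alpha'}$. The expansion therefore collapses to $b^{|\gamma} \equiv p^{|\gamma}q \pmod{J_{\alpha'}}$, precisely the identity used in the proof of Proposition~\ref{strict}; the subsequent case analysis on $\deg_J(q)$, distinguishing $\deg_J(q) < \alpha'$ from $\deg_J(q) = \alpha'$, then goes through unchanged and yields the desired contradiction with $\QQ(b)$.
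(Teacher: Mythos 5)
Your induction follows the paper's route (``combining with Proposition~\ref{strict}'') exactly when $\alpha_n>0$, but you have spotted a real subtlety: Proposition~\ref{strict} as stated only appends $\omega^{\alpha_{n+1}}$ for $\alpha_{n+1}>0$, whereas the corollary permits $\alpha_n=0$ (for instance $\alpha=3\omega+1$, which the paper explicitly cites as within scope). Your supplementary argument for $\alpha_n=0$ is correct. The unique occurrence of $\omega^0$ in the Cantor normal form of $\alpha$ forces $\deg_J^p(p)=1\le\deg_J^p(q)$ once $p$ is chosen to carry it, so Proposition~\ref{leib} applies and gives
\[
  b^{|\gamma}\equiv p^{|\gamma}q+q^{|\gamma}p \mod J_{\alpha'}
\]
for $\gamma\in\Res(b)\setminus\BigP^{\alpha'}(r)$ close to $0$; then
\[
  \deg_J(q^{|\gamma}p)=\deg_J(q^{|\gamma})\oplus\deg_J^r(p)\oplus 1
\]
is a successor ordinal while $\alpha'=\omega^{\alpha_1}+\dots+\omega^{\alpha_{n-1}}$ is a limit (since $\alpha_{n-1}>\alpha_n=0$); together with the strict bound $\deg_J(q^{|\gamma})\oplus\deg_J(p)<\deg_J(q)\oplus\deg_J(p)=\alpha'+1$, this yields $q^{|\gamma}p\in J_{\alpha'}$, and the rest of the proof of Proposition~\ref{strict} runs verbatim. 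Do make the upper bound $\deg_J(q^{|\gamma}p)\le\alpha'$ explicit rather than implicit: the limit/successor mismatch alone only rules out equality, not ``greater than''. It is also worth observing that the proof of Lemma~\ref{strprin} never in fact uses $\deg_J^p(p)>1$ --- the estimate there only needs $\deg_J^p(p)<\deg_J^p(q)$ together with the additive indecomposability of $\deg_J^p(q)$ --- so one could equivalently restate Proposition~\ref{strict} with $\alpha_{n+1}\ge 0$ and dispense with your case distinction; your limit/successor argument is a clean alternative way of seeing the same fact.
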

By Proposition~\ref{genir}, we obtain the following:
\begin{cor}
  Let $\alpha = \sum_{i=1}^n \omega^{\alpha_i }$ where $\alpha_1 \ge \alpha_2 \ge \alpha_3$ and $\alpha_3 > \ldots > \alpha_n $. Let $b=\sum_{i=1}^m b_it^{\gamma_i}+r $ where $\sup(b)=0 $, $b_1, \ldots , b_m \in P_{\alpha} $, $\deg(r)<\alpha $. If $\QQ(b_1, \ldots , b_m) $ then  $b$ is irreducible.
\end{cor}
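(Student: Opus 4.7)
The plan is to deduce this corollary as a direct application of Proposition~\ref{genir}, combined with the principal-case corollary just established. Recall that Proposition~\ref{genir} requires, as its sole hypothesis on $\alpha$, that every principal $c \in P_\alpha$ with $\QQ(c)$ has both $c$ and $\rv(c)$ irreducible. The previous corollary provides exactly this statement for any ordinal of the form $\alpha = \sum_{i=1}^n \omega^{\alpha_i}$ with $\alpha_1 \ge \alpha_2 \ge \alpha_3$ and $\alpha_3 > \ldots > \alpha_n$, so the hypothesis is met.

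It therefore remains only to cast the hypotheses of the corollary into the precise form demanded by Proposition~\ref{genir}. First, Proposition~\ref{genir} assumes that the exponents are listed in increasing order, $\gamma_1 < \ldots < \gamma_m$; we may enforce this by reindexing, since the sum $\sum_{i=1}^m b_i t^{\gamma_i}$ does not depend on the order of its terms, and the hereditary $\rv_J$-independence $\QQ(b_1, \dots, b_m)$ is symmetric in its arguments (all $b_i$ share the same $\deg_J$, and both clauses in the definition of $\QQ$ are permutation-invariant). Second, Proposition~\ref{genir} demands $\ot(r) < \omega^\alpha$ whereas here we only have $\deg(r) < \alpha$: writing $\ot(r) = \omega^{\beta_1} + \cdots + \omega^{\beta_k}$ in Cantor normal form gives $\deg(r) = \beta_1 < \alpha$, hence $\ot(r) < \omega^{\beta_1+1} \le \omega^\alpha$, as required.

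With these verifications in place, Proposition~\ref{genir} applies and yields immediately that $b$ is irreducible. There is no genuine obstacle at this stage: the substantive work was carried out upstream, namely in the principal-case corollary (which itself built on Propositions~\ref{strict},~\ref{princase} and on the two- and three-summand analyses) and in the abstract reduction from principal to non-principal series embodied in Lemma~\ref{indrv} and Proposition~\ref{genir}.
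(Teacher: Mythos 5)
Your proof is correct and matches the paper's, which derives this corollary as an immediate application of Proposition~\ref{genir} with the hypothesis furnished by the preceding corollary on principal series. The extra bookkeeping you carry out (reindexing so that $\gamma_1 < \dots < \gamma_m$, noting that $\QQ$ is permutation-invariant, and translating $\deg(r) < \alpha$ into $\ot(r) < \omega^\alpha$) is accurate and simply makes explicit what the paper leaves implicit.
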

This concludes the proofs of Theorems~\ref{printhm},~\ref{genthm}.

\bibliographystyle{alphaurl}
\bibliography{references}
\end{document}